\newtheorem{teorema}{Theorem}[section]
\newtheorem{lema}{Lemma}[section]
\newtheorem{proposicion}{Proposition}[section]
\newtheorem{corolario}{Corollary}[section]
\newtheorem{pregunta}{Question}[section]
\theoremstyle{definition}
\newtheorem{construccion}{Construction}[section]
\newtheorem{definicion}{Definition}[section]
\newtheorem{example}{Example}[section]
\newtheorem{remark}{Remark}[section]
\newcommand{\R}{\mathbb{R}}
\def\Sing{\operatorname{Sing}}
\def\Aff{\operatorname{Aff}}
\title{Veech groups of infinite genus surfaces}
\author{Camilo Ram\'irez Maluendas and Ferr\'an Valdez}
\date{\today}
\address{
Camilo Ram\'irez Maluendas \newline
FUNDACI\'ON UNIVERSITARIA KONRAD LORENZ.\newline
CP. 110231, BOGOT\'A, COLOMBIA.
}
\email{camilo.ramirezm@konradlorenz.edu.co}
\address{
Ferr\'an Valdez \newline
Centro de Ciencias Matem\'aticas, UNAM, Campus Morelia, C.P. 58190, Morelia, \newline 
Michoac\'an,  M\'exico.
}
\email{ferran@matmor.unam.mx}
\begin{document}
\maketitle
\begin{abstract}
We show that every countable subgroup $G<\rm GL_+(2,\mathbb{R})$ without contracting elements is the Veech group of a tame translation surface $S$ of infinite genus, for infinitely many different topological types of $S$. Moreover, we prove that as long as every end has genus, there are no restrictions on the topological type of $S$ to realise all possible \emph{uncountable}  Veech groups. 
\end{abstract}



\section*{Introduction}

A surface $S$ endowed with an atlas whose transition functions are translations is called \emph{a translation surface}. To each such surface we can associate the group $\rm Aff_+(S)$ formed by all orientation-preserving homeomorphisms of $S$ which are affine in local coordinates. It is easy to see that the derivative of any affine homeomorphism of $S$ is constant, hence we have a well defined map 
\begin{equation}
	\label{MapDerAffSurf}
	\rm Aff_+(S)\stackrel{D}{\longrightarrow}GL_+(2,\mathbb{R}).
\end{equation}
The main purpose of this article is the study of the image $\Gamma(S)$ of this map. 
When $S$ is a compact translation surface\footnote{That is, when the metric completion of $S$  with respect to the natural flat metric is a compact surface of genus $g\geq 1$.} $\Gamma(S)$ lies in $\rm SL(2,\mathbb{R})$, is Fuchsian and hence acts on the hyperbolic plane by M\"{o}bius transformations. In a milestone paper \cite{Vee}, W.A. Veech proved the geodesic flow on a compact translation surface $S$ for which $\Gamma(S)$ is a lattice behaves, roughly speaking, like the geodesic flow on a flat torus. For this reason it is usual to call $\Gamma(S)$  \emph{the Veech group} of the surface $S$. Aside from questions regarding the dynamical properties of the geodesic flow, it is natural to investigate which subgroups of $\rm GL_+(2,\mathbb{R})$ can be realised as Veech groups. For compact translation surfaces, this is a difficult question. Moreover, simpler instances of this problem, as the existence of hyperbolic cyclic Veech groups, are still open. 

In this paper we address the aforementioned realisation problem from the perspective of  \emph{infinite type} tame translation surfaces\footnote{These are surfaces whose fundamental group is not finitely generated and whose natural flat metric has only singularities of (possibly infinite) conic type. Compact translation surfaces are tame, but not all translation surfaces of infinite type are tame. We refer the reader to \cite{BV} for a general discussion on singularities of translation surfaces.}. This kind of translation surfaces appears naturally when studying irrational polygonal billiards \cite{Va} or infinite coverings of compact translation surfaces \cite{HW}. Given that translation surfaces are orientable, the topological type of an infinite type translation surface is determined by its genus $g=g(S)\in\mathbb{N}\cup\{\infty\}$  and a couple of nested compact, metrisable and totally disconnected spaces $Ends_{\infty}(S)\subset Ends(S)$. Here  $ Ends(S)$ is the space of ends of $S$ and ${\rm Ends}_{\infty}(S)$ is formed by those ends that carry (infinite) genus. Reciprocally, every couple of nested closed subspaces of the Cantor set $X_\infty\subset X\subset 2^{\omega}$
 can be realised as the space of ends and the space of ends with genus of some surface. The following result describes in general terms what is to be expected from the Veech group of a tame translation surface.
\begin{teorema}\cite{PSV}
\label{T:PSV1}
 The Veech group $\Gamma(S)$ of a tame translation surface is either:
\begin{enumerate}
  \item Countable and does not contain contracting elements, or
  \item Conjugated to
	$
P:=\left\{
\begin{pmatrix}
1 & t \\
0 & s
\end{pmatrix}\hspace{1mm}:\hspace{1mm} t\in\mathbb{R} \text{ y }
\hspace{1mm} s\in\mathbb{R}^{+}
\right\},\hspace{1mm}\text{or}
$
  \item Conjugated to $P'<{\rm GL_{+}(2, \R)}$, the subgroup generated by $P$ and $\rm -Id$, or
  \item Equal to ${\rm GL_{+}(2, \R)}$.
\end{enumerate}
\end{teorema}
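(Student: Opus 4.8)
The plan is to study the action of $\Gamma(S)$ on the countable set
\[
V := \{\,\mathrm{hol}(\gamma)\in\mathbb{R}^2 : \gamma \text{ is a saddle connection of } S\,\}
\]
of holonomy vectors of saddle connections. Since every affine homeomorphism sends saddle connections to saddle connections and multiplies holonomy by its derivative, $V$ is invariant under $\Gamma(S)$; tameness forces the cone points to be isolated, so $V$ is countable and $0\notin V$. The entire classification should be extracted from the ways $\Gamma(S)$ can act on this invariant, together with one genuinely geometric input about tame surfaces. The guiding principle is that $P$, $P'$ and $\mathrm{GL}_+(2,\mathbb{R})$ are, up to conjugacy, exactly the closed subgroups fixing a direction plus the whole group.

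First I would settle a \emph{cardinality dichotomy} by an orbit--stabiliser argument. Suppose $\Gamma(S)$ is uncountable. For each $v\in V$ the orbit $\Gamma(S)\cdot v\subseteq V$ is countable, so every point stabiliser $\mathrm{Stab}(v)$ is uncountable; if $V$ contained two linearly independent vectors $v_1,v_2$, then $\mathrm{Stab}(v_1)\cap\mathrm{Stab}(v_2)$ would be a trivial subgroup of countable index, forcing $\Gamma(S)$ to be countable, a contradiction. Hence, in the uncountable case, either $V=\varnothing$ or $V$ lies on a single line through the origin. The case $V=\varnothing$ corresponds to a surface with no saddle connections (morally $\mathbb{R}^2$ and its relatives), where one checks directly that all of $\mathrm{GL}_+(2,\mathbb{R})$ acts affinely, giving alternative (4).

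Assuming $V$ is confined to a line, after conjugating by a rotation I may take it to be horizontal, so every element of $\Gamma(S)$ preserves the horizontal direction and thus lies in the Borel subgroup $B$ of upper-triangular matrices of positive determinant. Reading off the $(1,1)$-entry defines a homomorphism $\rho:\Gamma(S)\to\mathbb{R}^{+}$ whose image $\Lambda$ scales the nonempty countable set $V_{\mathrm{hor}}=\{x:(x,0)\in V\}$ into itself; the same count, now multiplicative, shows $\Lambda$ is countable, so $\ker\rho=\Gamma(S)\cap P$ is uncountable. Next I would rule out nontrivial horizontal scaling: if some $A\in\Gamma(S)$ had $(1,1)$-entry $a\neq\pm1$, combining it with the uncountable family of shears in $\Gamma(S)\cap P$ would manufacture a \emph{contracting} element of operator norm $<1$; but a contracting affine homeomorphism shrinks saddle connections indefinitely, which is incompatible with the invariant $V$ unless $V=\varnothing$. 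Thus $\Lambda\subseteq\{\pm1\}$ and $\Gamma(S)$ sits inside a conjugate of $P'$.

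The main obstacle is the final upgrade: proving that the uncountable subgroup $\Gamma(S)\cap P$ is in fact all of $P$, so that $\Gamma(S)$ equals a conjugate of $P$ or of $P'$. An abstract uncountable subgroup of the two-dimensional solvable group $P$ need be neither closed nor everything, so here the argument must use the geometry of $S$ and tameness rather than pure group theory. Concretely, I expect to establish a rigidity statement: a \emph{non-discrete} Veech group of a tame translation surface must contain a one-parameter subgroup, obtained by extracting from a sequence $A_n\to\mathrm{Id}$ in $\Gamma(S)$ a limiting flow of affine homeomorphisms via a compactness argument on the surface, and then checking that the whole one-parameter family is realised by genuine affine maps. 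Granting this, $\Gamma(S)\cap P$ is a non-discrete, hence full, copy of $P$, and the classification closes. The countable alternative is then precisely case (1): the absence of contracting elements there is the contrapositive of the implication proved above, since a contracting element forces $V=\varnothing$ and therefore $\Gamma(S)=\mathrm{GL}_+(2,\mathbb{R})$, which is uncountable.
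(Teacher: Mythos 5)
A preliminary caveat: the paper does not prove Theorem \ref{T:PSV1} --- it quotes it from \cite{PSV} --- so I am measuring your outline against the argument in that reference. Your starting point is sound and is indeed how the proof begins: the set $V$ of holonomy vectors of saddle connections is a countable invariant, and the orbit--stabiliser count correctly shows that if $\Gamma(S)$ is uncountable and $V$ contains two independent vectors one gets a contradiction, so an uncountable Veech group with $V\neq\varnothing$ preserves a line. The gaps come afterwards. First, the assertion that $V=\varnothing$ yields alternative (4) is false: the flat cylinder $\mathbb{R}^2/\mathbb{Z}e_1$ is tame, has no singularities and hence no saddle connections, and its Veech group is $P'$, not $\mathrm{GL}_+(2,\mathbb{R})$ (the flat torus similarly gives $\mathrm{SL}(2,\mathbb{Z})$). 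You rely on this implication twice, including in your final sentence. What is actually true, and what \cite{PSV} proves, is that a \emph{contracting} affine automorphism forces $S$ to be $\mathbb{R}^2$ or a cyclic branched cover of it: one applies the Banach fixed point theorem to the contraction acting on the complete space $\widehat{S}$ and then analyses the local cone structure at the fixed point. Your substitute --- ``a contracting map shrinks saddle connections indefinitely, which is incompatible with $V$'' --- does not work on infinite-type tame surfaces, where holonomy vectors of saddle connections may accumulate at the origin (glue discretely placed slits of length $1/n$ into the plane), so there is no shortest saddle connection to contradict.

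Second, two group-theoretic steps fail. The claim that an element with $(1,1)$-entry $a\neq\pm1$ combined with the uncountable subgroup $\Gamma(S)\cap P$ ``manufactures'' a contracting element is unjustified: that subgroup could a priori consist entirely of unipotent shears, and $\left(\begin{smallmatrix} a & \ast \\ 0 & d\end{smallmatrix}\right)\left(\begin{smallmatrix} 1 & t \\ 0 & 1\end{smallmatrix}\right)$ is never contracting when $|d|\geq 1$. More seriously, the step you yourself flag as the main obstacle --- upgrading the uncountable subgroup $\Gamma(S)\cap P$ to all of $P$ --- is left as an expectation, and the closing move ``non-discrete, hence full, copy of $P$'' is false for subgroups of $P$: the unipotent one-parameter group is a proper, closed, connected, non-discrete subgroup of $P$, so even producing a limiting one-parameter flow would not finish the argument. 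This upgrade is precisely where the geometric work of \cite{PSV} lives and cannot be replaced by soft topology of subgroups of $P$. As it stands the proposal establishes only the easy confinement statements and leaves the two hard implications (contracting element $\Rightarrow$ case (4); uncountable $\Rightarrow$ contains a conjugate of $P$) unproved.
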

It is not difficult to see that condition 4 above implies that $S$ is isometric to either the plane or a ramified covering of the plane. Hence, it is natural to ask if all other expected groups can be realised within \emph{the same topological class} of an infinite type tame translation surface:


\begin{pregunta}
\label{Q:1}
Let $X_\infty\subset X\subset 2^\omega$ be  a nested couple of closed subspaces of the Cantor set. Is it possible to realise any subgroup of $\rm GL_+(2,\mathbb{R})$ satisfying (1), (2), or (3) in theorem \ref{T:PSV1} as the Veech group of some \emph{tame} translation surface $S$ satisfying $X_\infty=Ends_\infty(S)$ and $X=Ends(S)$?
\end{pregunta}

For the simplest instance of this question $X_\infty= X =\{*\}$ the answer is positive:
\begin{teorema}\emph{(Ibid.)}
\label{T:PSV2}
Any subgroup of ${\rm GL_{+}(2, \R)}$ satisfying (1), (2), or (3) in theorem \ref{T:PSV1} can be realised as the Veech group of an infinite genus tame translation with only one end.
\end{teorema}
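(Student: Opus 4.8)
The plan is to treat the two essentially different regimes separately: the countable groups of item (1), and the two uncountable model groups $P$ and $P'$ of items (2) and (3). In every case the target surface $S$ must be the \emph{Loch Ness monster}, the unique orientable surface of infinite genus with exactly one end, so a recurring task is to verify that the chosen gluing pattern produces precisely this topological type and that all singularities of the flat metric are of conic (possibly infinite-angle) type, which guarantees tameness. In all cases I would establish $\Gamma(S)=G$ by the two inclusions separately: a \emph{lower bound} $G\subseteq\Gamma(S)$, realized by exhibiting explicit affine homeomorphisms with the prescribed derivatives, and an \emph{upper bound} $\Gamma(S)\subseteq G$, forced by building enough rigidity into $S$ that every element of $\Aff_+(S)$ must respect a distinguished structure and hence have derivative in $G$.

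For a countable group $G=\{g_1,g_2,\dots\}<\mathrm{GL}_+(2,\R)$ without contracting elements I would construct $S$ in two stages. First, for the lower bound, I would assemble elementary flat building blocks indexed by the elements of $G$, gluing one block sheared by the affine map of derivative $g_i$ for each $i$ along slits, so that $G$ acts freely and properly discontinuously by affine homeomorphisms whose derivative cocycle is the identity $g\mapsto g$; the hypothesis that $G$ has no contracting elements is what makes such an affine action metrically realizable. Second, for the upper bound, I would attach to the blocks an \emph{asymmetric marking}, for instance handles of pairwise distinct moduli placed at prescribed $G$-equivariant positions, chosen so that any affine homeomorphism of $S$ is forced to permute the markings, hence to preserve the block decomposition, and therefore to have derivative in $G$. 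One must check that the infinitely many handles accumulate only at the single end, so that the genus becomes infinite and the end space stays a point.

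For the groups $P$ and $P'$, which fix the horizontal direction and are generated by horizontal shears $\begin{pmatrix}1&t\\0&1\end{pmatrix}$ and vertical scalings $\begin{pmatrix}1&0\\0&s\end{pmatrix}$ (together with $-\mathrm{Id}$ for $P'$), I would build a surface that is simultaneously invariant under every horizontal shear and under a one-parameter family of vertical scalings. The natural model is a surface cut into horizontal cylinders whose heights form a geometric progression $\{s_0^{\,n}\}$, arranged so that vertical scaling by $s_0$ shifts the index $n\mapsto n+1$ while horizontal shear acts within each cylinder; using infinitely many cylinders yields infinite genus, and a careful gluing collapses the configuration to a single end. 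Realizing the \emph{full} continuous group $P$ rather than only a cyclic scaling requires the picture to be self-similar for every $s$, which is arranged by a continuous rescaling argument. For the upper bound I would break the horizontal symmetry by an asymmetric pattern of cone points along the cylinders, so that no affine automorphism can carry the horizontal direction to any other, pinning every derivative to the upper-triangular form of $P$; the central symmetry giving $-\mathrm{Id}$ is then either built in or withheld to distinguish $P'$ from $P$. Case (4) need not be addressed, since by Theorem \ref{T:PSV1} it occurs only for the plane and its branched covers, which are not of infinite genus.

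The main obstacle, in both regimes, is the upper bound together with the simultaneous control of the topological type. For the countable case the difficulty is to prescribe infinitely many handle moduli freely, without accidentally creating an unexpected isometry or an extra end; for $P$ and $P'$ it is to realize an \emph{uncountable} family of affine automorphisms while ruling out any enlargement of $\Gamma(S)$, the worst case being an accidental rotation that would push the group outside the upper-triangular model. Verifying tameness and the one-end, infinite-genus conclusion under these rigidity constraints is where the bulk of the technical work lies.
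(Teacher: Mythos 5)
First, a point of reference: the paper does not prove this statement — it is quoted from \cite{PSV} — and the closest in-paper material is Constructions \ref{cons:2.1}--\ref{cons:2.2} (for $P$, $P'$) together with the puzzle machinery of \S\ref{Section:PUZZLES} (for countable $G$). Measured against that, your countable case follows essentially the standard route: affine copies of a building block indexed by $G$, glued along a Cayley graph so that $G$ acts by affine diffeomorphisms with derivative cocycle $g\mapsto g$, a symmetry-breaking decoration for the upper bound, and the observation that the absence of contracting elements is what prevents singularities from accumulating (this is exactly the role of Lemma \ref{l:3.1}). The one substantive divergence is your decoration: ``handles of pairwise distinct moduli'' is delicate, since affine maps do not preserve moduli in any normalized sense that survives the gluing; the device actually used is a single cone point of angle $6\pi$ in the block $S_g$ from which exactly three saddle connections emanate with holonomy $\{\pm g\cdot e_1,\pm g\cdot e_2\}$, so that any affine diffeomorphism must permute these distinguished points and its derivative is forced into $G$. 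Your idea is of the right kind but would need to be replaced by a holonomy-rigid decoration of this sort to actually close the upper bound.

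The genuine gap is in the $P$ and $P'$ cases. A decomposition into horizontal cylinders with heights in geometric progression $\{s_0^{\,n}\}$ is invariant only under the cyclic group generated by scaling by $s_0$, not under the full one-parameter family $\begin{pmatrix}1&0\\0&s\end{pmatrix}$, $s>0$, and the ``continuous rescaling argument'' you invoke to repair this is precisely the missing step: there is no evident way to make a fixed cylinder decomposition self-similar under every $s$ simultaneously. The construction that works avoids cylinders altogether: take a single copy of the Euclidean plane, place all slits as horizontal segments \emph{on the $x$-axis} in the half-plane $x>0$, and reglue them in pairs (Construction \ref{cons:2.1}). Every element of $P$ fixes the $x$-axis pointwise, hence fixes every slit pointwise and descends to the quotient, which is a Loch Ness Monster; conversely all holonomy vectors are $\pm e_1$ and there are no saddle connections in $x<0$, which forces $\Gamma(S_P)=P$. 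Adding a centrally symmetric family of slits on the negative axis yields $P'$ (Construction \ref{cons:2.2}), and conjugates of $P$ and $P'$ are then obtained by letting ${\rm GL_{+}(2,\R)}$ act on these surfaces. Without the pointwise-fixed-axis idea, your uncountable case does not go through as written.
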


The main contribution of this article is to show that question \ref{Q:1} has a positive answer for a large topological class of infinite type tame translation surfaces. For instance, in \S \ref{SEC:UNCOUNTABLEVEECH} we show that within the topological class of infinite type tame translation surfaces $S$ for which $Ends_\infty(S)=Ends(S)$, there are no restrictions to realise \emph{uncountable} Veech groups:

\begin{teorema}
\label{T:PP}
Let $X$ be any closed subset of the Cantor set and $P, P^{'}<GL_{+}(2,\mathbb{R})$ be as in theorem \ref{T:PSV1}. Then there exist tame translation surfaces $S$ and $S^{'}$ for which:
\begin{itemize}
\item $Ends(S)=Ends_\infty(S)=Ends(S')=Ends_\infty(S')=X$
\item the Veech groups of $S$ and $S'$ are conjugated to $P$ and $P^{'}$, respectively.
\end{itemize}
\end{teorema}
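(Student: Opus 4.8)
The plan is to let the classification of Theorem \ref{T:PSV1} supply the upper bound on the Veech group and to reduce the statement to a few soft properties that I can arrange by hand. Fix a nonempty closed set $X\subseteq 2^\omega$. Since $-\mathrm{Id}$ is central in $\mathrm{GL}_+(2,\R)$, conjugation fixes it, so a subgroup conjugate to $P$ never contains $-\mathrm{Id}$ (as $-\mathrm{Id}\notin P$) while a subgroup conjugate to $P'=\langle P,-\mathrm{Id}\rangle$ always does. Hence it suffices to construct tame translation surfaces $S$ and $S'$ such that: (i) $\mathrm{Ends}(S)=\mathrm{Ends}_\infty(S)=X$, and likewise for $S'$; (ii) $\Gamma(S)$ and $\Gamma(S')$ each contain an uncountable subgroup; (iii) $\Gamma(S),\Gamma(S')\neq\mathrm{GL}_+(2,\R)$; and (iv) $-\mathrm{Id}\in\Gamma(S')$ but $-\mathrm{Id}\notin\Gamma(S)$. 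Indeed, (ii) excludes case (1) of Theorem \ref{T:PSV1}, (iii) excludes case (4), so both groups are conjugate to $P$ or to $P'$, and (iv) then decides which. The point of this reduction is that I never need to compute a Veech group from above: it is enough to exhibit \emph{one} uncountable family of affine symmetries together with the coarse data (iii)--(iv).

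For the uncountable symmetry I would start from the one-ended model provided by Theorem \ref{T:PSV2}: let $B$ (resp. $B'$) be an infinite-genus one-ended tame surface with Veech group conjugate to $P$ (resp. $P'$), and fix an uncountable one-parameter subgroup $\Lambda\subset P$ realized on $B$ by affine automorphisms (the horizontal shears $\psi_t\colon(x,y)\mapsto(x+ty,y)$, or the vertical scalings $\phi_s\colon(x,y)\mapsto(x,sy)$). A structural fact I would use repeatedly is forced by continuity: a one-parameter family of homeomorphisms must act on the discrete set of singularities by a continuous family of permutations, hence trivially for parameters near the identity; consequently every singularity of $B$ lies on the common fixed locus of $\Lambda$ (the line $y=0$, together with the ends $y=\pm\infty$). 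Thus all the nontrivial topology of $B$ is organized along a $\Lambda$-invariant locus, and any gluing performed along neighbourhoods of that locus will automatically respect $\Lambda$.

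To realize an arbitrary $X$ I would encode it combinatorially: by Richards' Theorem choose a locally finite tree $T$ whose space of ends is homeomorphic to $X$, and assemble copies of the genus-carrying end of $B$ (resp. $B'$) over $T$, placing one such end along each end of $T$ and splicing the copies together along their $\Lambda$-invariant loci so that $\Lambda$ extends diagonally to an uncountable group of affine automorphisms of the assembled surface $S$. Because each end of $S$ is a copy of the infinite-genus end of $B$, every end carries genus and the end space of $S$ is exactly the end space of $T$, namely $X$, giving (i); and the extended $\Lambda$ gives (ii). This third step is the genuine difficulty: one must simultaneously (a) match the end space of the assembly to an \emph{arbitrary} closed $X$, (b) keep every end of infinite genus, and (c) preserve the full uncountable family $\Lambda$ across the branching of $T$. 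The continuity constraint above is exactly what makes (c) delicate, since it forbids splicing along any locus moved nontrivially by the flow; I would resolve it, as in the one-ended model, by performing all identifications and handle attachments only along (neighbourhoods of) the fixed locus, so that each element of $\Lambda$ carries the gluing data to itself and descends to $S$.

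Finally, (iii) holds because the assembled surface has a canonical horizontal direction---the unique direction fixed by the uncountable symmetry $\Lambda$ and carrying its invariant structure---so no irrational rotation can be affine; hence $\Gamma(S)\neq\mathrm{GL}_+(2,\R)$, and in particular $S$ is neither the plane nor a ramified cover of it. For (iv) I would build $S'$ from $B'$ and make the tree embedding, the branching, and all handle attachments invariant under the central involution $(x,y)\mapsto(-x,-y)$, so that $-\mathrm{Id}\in\Gamma(S')$; and I would build $S$ from $B$ with a deliberate asymmetry compatible with $\Lambda$ but not with $(x,y)\mapsto(-x,-y)$ (for instance an orientation of the invariant locus, or an asymmetric marking of the genus pattern), so that no affine homeomorphism has derivative $-\mathrm{Id}$ and thus $-\mathrm{Id}\notin\Gamma(S)$. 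With (i)--(iv) in place, Theorem \ref{T:PSV1} forces $\Gamma(S)$ to be conjugate to $P$ and $\Gamma(S')$ to be conjugate to $P'$, which is the assertion.
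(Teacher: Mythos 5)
Your reduction via Theorem \ref{T:PSV1} is valid: a subgroup conjugate to $P$ cannot contain the central element $-\mathrm{Id}$ while one conjugate to $P'$ must, so ``uncountable $+$ proper $+$ the $-\mathrm{Id}$ test'' does pin down the conjugacy class, and this is a legitimate alternative to the paper's direct computation of both inclusions $\Gamma(S_{elem})\supset P$ and $\Gamma(S_{elem})\subset P$. The overall shape of your construction (a one-ended building block carrying the uncountable symmetry, assembled over a tree encoding $X$, with all gluings confined to the horizontal fixed locus) is also the paper's. But two steps are genuinely missing. First, you cannot take the one-ended surface $B$ of Theorem \ref{T:PSV2} as a black box and deduce that its singularities lie on the common fixed line of $\Lambda$: an uncountable Veech group only tells you that each $g\in\Lambda$ is the derivative of \emph{some} affine homeomorphism, not that these assemble into a continuous one-parameter family (the map $D$ has no canonical continuous section), so the ``continuity forces the singularities onto $y=0$'' argument does not apply to a surface you did not build. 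The paper sidesteps this by constructing an explicit model $S_P$ --- a single Euclidean plane with horizontal slits along the positive $x$-axis reglued in pairs --- for which the shears and vertical scalings visibly fix every mark pointwise, and for which the asymmetry excluding $-\mathrm{Id}$ (no saddle connections in the half-plane $x<0$) is concrete rather than ``a deliberate asymmetric marking.''

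Second, and more seriously, the assembly over $T$ is precisely where the content of the theorem lies, and ``placing one copy of the genus-carrying end of $B$ along each end of $T$'' is not a construction when $X$ is uncountable: a translation surface is built from countably many charts and gluings, so you cannot index building blocks by the (possibly uncountable) set of ends. The paper resolves this with Lemmas \ref{l:1.1} and \ref{l:1.2}: realise $X$ as $Ends(T_X)$ for a subgraph $T_X$ of the binary tree, decompose $T_X$ into a \emph{countable} family $\mathfrak{T}_X$ of infinite rays any two of which meet in at most one vertex, attach one copy of $S_P$ per ray, and glue along marks labelled by the shared vertices. Even then, the claim that the resulting surface has end space homeomorphic to $X$ is not automatic; it is Lemma \ref{l:1.4}, whose proof constructs an embedding $i:T_X\hookrightarrow S_{elem}$, passes to descending paths to parametrise the (possibly uncountably many) ends not captured by $\mathfrak{T}_X$, and verifies that $i_*$ is a continuous closed bijection on end spaces. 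You correctly identify this step as ``the genuine difficulty,'' but the resolution you offer (glue only along the fixed locus) addresses the preservation of $\Lambda$, not the countability of the gluing scheme nor the identification of $Ends(S)$ with $X$; without those, the proof is incomplete.
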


The rest of the results we present deal with realising \emph{countable subgroups of $\rm GL_+(2,\mathbb{R})$ without contracting elements} as Veech groups of infinite type tame translation surfaces satisfying $Ends_\infty(S)=Ends(S)$. We present them according to the following heuristic picture: 
if one was to order infinite genus surfaces satisfying $Ends_\infty(S)=Ends(S)$ according to their topological complexity, the simplest surface would be the one for which the space of ends is just a singleton and the most sophisticated would be the one for which the space of ends is homeomorphic to the Cantor set. These surfaces are called the \emph{Loch Ness Monster} and the \emph{Blooming Cantor Tree}, respectively (see Figures \ref{Figure1} and \ref{Figure2}). The nomenclature is due to Sullivan \cite{PSul} and Ghys \cite{Ghys}. 

\begin{teorema}
\label{T:AF}

Let $G< {\rm GL_{+}(2,\mathbb{R})} $ be any countable subgroup without contracting elements. Then there exist a tame translation surface $S$ homeomorphic to Blooming Cantor tree whose Veech group is $G$.
\end{teorema}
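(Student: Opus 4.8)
The plan is to assemble $S$ as a $G$-equivariant infinite gluing of translation-surface building blocks arranged along a rooted binary tree, using the realization of $G$ on the Loch Ness Monster (Theorem \ref{T:PSV2}) as a seed and then ``blooming'' the construction so that the space of ends becomes a Cantor set, every point of which carries infinite genus.

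First I would isolate from the proof of Theorem \ref{T:PSV2} the geometric gadget that forces $G\leq\Gamma(S)$: a tame translation surface $\Sigma_0$ of infinite genus together with a faithful action of $G$ by affine homeomorphisms whose derivative is the inclusion $G\hookrightarrow\mathrm{GL}_+(2,\R)$, equipped with a distinguished pair of parallel slits along which further pieces can be glued. Since $G$ is countable and has no contracting elements, Theorem \ref{T:PSV1} guarantees this is not obstructed. Next I would introduce two further families of tame building blocks: a \emph{branching block}, a translation surface carrying three distinguished slits, used to split one gluing slit into two and thereby create the binary-tree combinatorics responsible for the Cantor set of ends; and a \emph{genus block}, a handle carrying one distinguished slit, used to inject infinite genus into every branch of the tree. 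Each block would come with a $G$-indexed family of copies so that the $G$-action on $\Sigma_0$ propagates across all the gluings.

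The construction then proceeds by indexing blocks over the vertices of the infinite rooted binary tree $T$: to the root I attach $\Sigma_0$; along each edge of $T$ I glue a branching block to double the number of free slits, and at every vertex I insert infinitely many genus blocks so that the limit of any infinite branch of $T$ is an end of infinite genus. Carrying out all slit identifications $G$-equivariantly, each $g\in G$ induces an affine homeomorphism of the resulting surface $S$ with $D(g)=g$, which yields $G\leq\Gamma(S)$. Tameness is preserved because each block is tame and all identifications are along geodesic segments, so the only singularities are cone points of finite or infinite angle. To identify the topological type I would appeal to Richards' Theorem: the tree gluing makes $Ends(S)$ homeomorphic to the set of infinite branches of $T$, namely the Cantor set, and the blooming forces $Ends_\infty(S)=Ends(S)$, so $S$ is the Blooming Cantor tree.

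The delicate point, and the one I expect to be the main obstacle, is the reverse inclusion $\Gamma(S)\leq G$: ruling out affine automorphisms that do not come from $G$. The tree $T$ has an enormous automorphism group, and a priori these combinatorial symmetries could lift to affine symmetries of $S$ and enlarge the Veech group. To prevent this I would \emph{mark} the blocks, decorating the branching and genus blocks with cone points of pairwise distinct angles, or slits of pairwise distinct lengths, chosen to depend on the position of the block in $T$, so that any affine homeomorphism is forced to preserve the marking, hence to fix $\Sigma_0$ setwise and to act trivially on $T$. Once an affine automorphism is pinned down in this way, the rigidity already established for the Loch Ness Monster realization identifies its derivative with an element of $G$, giving $\Gamma(S)=G$. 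The heart of the argument is to choose the markings so that they simultaneously break all nontrivial tree automorphisms and remain invariant under the $G$-action; verifying this compatibility is where the real work lies.
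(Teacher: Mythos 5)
Your overall architecture --- a $G$-equivariant Cayley-type gluing to force $G\leq\Gamma(S)$, combined with a binary-tree gluing of genus-carrying blocks to produce a Cantor set of non-planar ends --- is the same strategy as the paper's ``puzzle'' construction (there, an elementary piece with $Ends=Ends_\infty\cong 2^\omega$ is built from one Loch Ness Monster per path in a path-decomposition of the binary tree, and then $G$-many affine copies of it are glued along the Cayley graph of $(G,H)$). However, there are two genuine gaps. The first is the ends computation. The claim that ``the tree gluing makes $Ends(S)$ homeomorphic to the set of infinite branches of $T$'' fails on two counts. (a) When $G$ is infinite, the $G$-indexed gluing creates at least one additional end not visible in $T$: the end you reach by escaping through infinitely many $G$-copies (the paper's \emph{secret end}), and one must prove it is not an isolated point of $Ends(S)$, otherwise the ends space is not perfect and hence not a Cantor set. (b) As literally described, ``at every vertex I insert infinitely many genus blocks'' forces the handles attached at a fixed vertex to accumulate somewhere; if they march off to infinity at that vertex they create one new \emph{isolated} end per vertex, again destroying perfectness. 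The paper avoids both problems by distributing the genus along each infinite path (one Loch Ness Monster per path, with ends identified with ends of the tree) and by devoting the bulk of Theorem \ref{pro:3.1} and \S 3.2 to showing that the resulting ends space is exactly the tree's ends plus one secret end, with no isolated points.

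The second gap is the upper bound $\Gamma(S)\leq G$, which you correctly flag as the delicate point but attack with the wrong mechanism. Breaking all automorphisms of $T$ is neither necessary nor clearly achievable: an affine homeomorphism that permutes tree blocks but has derivative $\mathrm{Id}$ (or an element of $G$) is harmless, since the Veech group is only the \emph{image} of $D$ and may well have a large kernel. What must be excluded is a derivative outside $G$, and your marking scheme does not deliver this: slit lengths are not affine invariants, so they cannot be made ``pairwise distinct'' compatibly with the $G$-action; and even granting that every affine map preserves $\Sigma_0$ setwise, you still owe an argument that its derivative lies in $G$ (the restriction to $\Sigma_0$ minus the gluing slits is an affine map of a surface with boundary, not immediately an element of $\mathrm{Aff}_+(\Sigma_0)$). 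The paper's solution is more direct: each $G$-copy $S_g$ carries exactly one cone point of angle $6\pi$, from which issue exactly three saddle connections with holonomy vectors $\{\pm g\cdot e_1,\pm g\cdot e_2\}$; any affine automorphism must permute these special points, so its derivative carries $\{\pm e_1,\pm e_2\}$ to $\{\pm g\cdot e_1,\pm g\cdot e_2\}$ with positive determinant, forcing $Df=g\in G$ with no need to rigidify the tree at all.
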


In other words, question \ref{Q:1} has a positive answer if we move from the simplest topological type of an infinite genus surface without planar ends to the most sophisticated one. After this, we show that for infinitely many cases "in between" question  \ref{Q:1} has also a positive answer. 

\begin{teorema}
\label{T:AN}
Let $X$ be a countable closed subspace of the Cantor set with characteristic system $(k,1)$ where $k\in \mathbb{N}\cup\{0\}$, and $G<{\rm GL_{+}(2,\mathbb{R}) }$ any countable subgroup without contracting elements. Then there exists a tame translation surface $S$ whose spaces of ends $Ends_\infty(S)=Ends(S)$ are homeomorphic to $X$  and whose Veech group is $G$.
\end{teorema}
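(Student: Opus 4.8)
The plan is to prove this theorem by combining the realisation technique for arbitrary countable groups $G$ (which underlies Theorem \ref{T:AF}) with a topological surgery that produces a surface whose end space is the prescribed countable space $X$ with characteristic system $(k,1)$. Recall that a countable closed subspace of the Cantor set is homeomorphic to a countable compact ordinal $\omega^{\alpha}\cdot n+1$ by the Cantor--Bendixson / Mazurkiewicz--Sierpi\'nski classification, and the characteristic system $(k,1)$ encodes that the $k$-th Cantor--Bendixson derivative $X^{(k)}$ is a single point; that is, $X$ has Cantor--Bendixson rank $k+1$ with a unique point of maximal rank. The extreme case $k=0$ gives $X=\{*\}$, which is exactly the Loch Ness Monster setting already covered by Theorem \ref{T:PSV2}, so the content is in building, for each $k\geq 1$, a surface of infinite genus whose end space has exactly one point of rank $k$ and all of whose ends carry genus.

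First I would fix a \emph{base surface} realising the Veech group. By Theorem \ref{T:PSV2} there is an infinite genus tame translation surface $S_{0}$ with a single end and Veech group $G$; this surface carries all the affine symmetry we need, and its single end is where we will graft the required end-space complexity. The key structural idea is that one may attach, along disjoint families of slits or handles placed \emph{far out} in $S_{0}$ (i.e. deep in the neighbourhood of its unique end), copies of infinite-genus pieces without disturbing the Veech group, provided the grafting is done equivariantly with respect to the (at most countable) affine group and the attached pieces are themselves affinely rigid (trivial Veech group) and of infinite genus. Concretely I would build, by induction on the Cantor--Bendixson rank, a tree-like arrangement of infinite-genus ``building blocks'' whose end space realises $X$: at rank $1$ one glues countably many Loch Ness Monster pieces accumulating to a single new end, and at each successive level one nests the previous construction, so that after $k$ steps the limiting point of rank $k$ appears as the unique end of maximal rank. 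The characteristic system $(k,1)$ is matched by arranging that exactly one such accumulation persists at the top level while all intermediate derivatives are finite.

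The crucial point, and the step I expect to be the main obstacle, is controlling the Veech group of the resulting surface so that it remains \emph{exactly} $G$ and does not accidentally grow. Grafting infinitely many pieces can introduce new affine automorphisms (for instance permutations of isometric pieces) or, worse, new derivatives that enlarge the image of the map $D$ in \eqref{MapDerAffSurf}. To prevent this I would (i) make the attached pieces pairwise non-affinely-equivalent, or break the symmetry by marking them with distinct flat data (cone angles, lengths of attaching slits chosen from a set with no non-trivial multiplicative relations compatible with $G$), and (ii) ensure that any affine homeomorphism of the total surface must preserve the end space $X$ and hence its Cantor--Bendixson filtration, so that it must fix the unique top-rank end and restrict to an affine map of the base $S_{0}$, whose derivative therefore lies in $G$. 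Tameness must be verified throughout: every singularity introduced at the glued slits should be of finite or infinite cone type, which is automatic if the grafting is by standard slit constructions.

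Finally I would check the topological identification $Ends_{\infty}(S)=Ends(S)\cong X$: since every building block has infinite genus, every end of $S$ accumulates genus, giving $Ends_{\infty}(S)=Ends(S)$, and the inductive construction is designed so that the end space has Cantor--Bendixson derivatives terminating at a single point at level $k$, matching characteristic system $(k,1)$. The combinatorial bookkeeping of the nesting --- ensuring the derived sets collapse at precisely the right rate --- is routine once the building blocks and their accumulation pattern are fixed, so the real work is the equivariant, symmetry-breaking grafting that pins the Veech group to $G$.
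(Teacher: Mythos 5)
Your overall strategy (realise $G$ first, then graft infinite-genus pieces to create the prescribed end space) differs from the paper's, which instead builds a \emph{puzzle}: one affine copy $S_g=g\cdot S_{elem}$ of an elementary piece, whose end space is already $\omega^k+1$, for each $g\in G$, glued along the Cayley graph of $(G,H)$. The difficulty you flag as ``the real work'' is indeed where your proposal has a genuine gap, and it is not resolved. To keep $G\subset\Gamma(S)$ the grafting must be equivariant, which forces the pieces attached along a single $\mathrm{Aff}_+(S_0)$-orbit to be \emph{affine copies of one another}; your symmetry-breaking device (i) --- making the attached pieces pairwise non-affinely-equivalent or giving them incompatible flat data --- is in direct tension with this and, taken literally, destroys the inclusion $G\subset\Gamma(S)$. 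Your device (ii) also does not close the argument: an affine homeomorphism that fixes the unique top-rank end need not preserve the decomposition of $S$ into ``base'' and ``grafted pieces'', and even if it did, its restriction would be an affine map of an open subsurface of $S_0$, whose Veech group you have not controlled. The paper avoids end-space considerations entirely for the upper bound $\Gamma(S)<G$: each piece $S_g$ carries exactly one cone point of angle $6\pi$ with exactly three saddle connections of holonomy $\{\pm g e_1,\pm g e_2\}$, so any affine diffeomorphism permuting these points must have derivative in $G$. Some such concrete rigidity mechanism, compatible with the equivariance, is the missing idea.

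A second, smaller gap is your claim that the Cantor--Bendixson bookkeeping is ``routine''. Equivariant grafting over an infinite group forces the orbit of attached pieces to accumulate somewhere, producing one additional end of $S$ (the paper's \emph{secret end}) that absorbs one distinguished end from every piece. Its rank in the derived-set filtration must be engineered, not merely observed: the paper chooses the path $\tilde\gamma$ in the elementary piece so that the distinguished end of each $S_g$ is precisely the unique rank-$k$ point of $\omega^k+1$; only then does the assembled end space $Y_k=\{y\}\cup\bigsqcup_{g\in G}\omega^k_g$ satisfy $Y_k'=Y_{k-1}$ and hence have characteristic system $(k,1)$. Without that choice the construction can just as easily yield characteristic system $(k,n)$ with $n>1$, or rank $k+1$, so this step cannot be deferred as combinatorial bookkeeping.
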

Recall that a countable compact Hausdorff space $X$ has \emph{characteristic system} $(k,n)$ if its $k$-th Cantor-Bendixon derivative, that we denote by $X^k$, is a finite set of $n$ points \cite{MaSi}. We remark that for $k=0$, the statement of theorem \ref{T:AN} is the statement of theorem \ref{T:PSV2} for countable subgroups of $\rm GL_+(2,\mathbb{R})$ without contracting elements.

On the other hand, we know from the Cantor-Bendixson theorem that every uncountable closed subset of the Cantor set $2^\omega$ is homeomorphic to a subset of the form $B\sqcup U\subset2^{\omega}$, where $B$ is homeomorphic to $2^{\omega}$ and $U$ is countable and discrete.

 \begin{teorema}
 \label{T:NN}

Let $B\sqcup U$ be an uncountable closed subset of the Cantor set, where $B$ is homeomorphic to the Cantor set and $U$ is countable, discrete and its boundary $\partial U$ is just one point. Then for any countable  subgroup 
$G<{\rm GL_{+}(2,\mathbb{R})}$ without contracting elements there exist a tame translation surface $S$ whose spaces of ends $Ends_\infty(S)=Ends(S)$ are homeomorphic to $B\sqcup U$ and whose  Veech group is $G$. \end{teorema}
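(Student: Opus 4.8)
The plan is to combine the two "endpoint" constructions already available in the paper, namely Theorem \ref{T:AF} (Blooming Cantor tree) and Theorem \ref{T:PSV2} (Loch Ness Monster, one end), into a single surface whose space of ends realises the prescribed $B\sqcup U$. Topologically, the target space is a Cantor set $B$ together with a countable discrete set $U$ accumulating to a single point $p\in\partial U$. The natural way to build such a surface is to take a surface $S_B$ homeomorphic to the Blooming Cantor tree, realising the Cantor-set part of the ends, and to attach to it along a suitable connect-sum or gluing locus a surface that contributes the discrete sequence of ends accumulating at the single boundary point $p$. Since $\partial U$ is a single point, the discrete ends $U$ converge to $p$, and $p$ itself should be identified with (or embedded into) the Cantor set $B$ so that the total space of ends is exactly $B\sqcup U$ with the correct topology.

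First I would fix a translation surface $S_0$ homeomorphic to the Blooming Cantor tree with Veech group $G$, as produced by Theorem \ref{T:AF}, and identify an end $e_0\in Ends(S_0)$ lying in the Cantor set of ends which will play the role of the accumulation point $p$. Separately, I would take a Loch Ness Monster surface $L$ with trivial (or at least controllable) Veech group carrying a single end, or rather a sequence of such infinite-genus pieces indexed by $U$, each of finite "visible" complexity but infinite genus so that every end of the final surface carries genus as required by the hypothesis $Ends_\infty(S)=Ends(S)$. The countable discrete family $U$ would be realised by grafting infinitely many infinite-genus handles or copies of a basic infinite-genus building block onto $S_0$ in a neighbourhood of the chosen end $e_0$, arranged so that their ends accumulate precisely at $e_0=p$ and nowhere else. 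One must perform these grafts using \emph{translation} gluings (cutting along parallel slits and identifying by translations) so that the flat structure and tameness are preserved, and so that each added piece has infinite genus, guaranteeing $Ends_\infty(S)=Ends(S)$.

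The crucial point is to carry out the grafting so that the Veech group of the resulting surface $S$ is still exactly $G$ and not larger. This is the same mechanism used in Theorems \ref{T:PSV2} and \ref{T:AF}: one encodes the group $G$ into the surface by a marking or decoration (a choice of baseline combinatorial data, e.g. an infinite staircase or a labelled family of slits realising the generators of $G$) that is preserved by every affine automorphism, while the extra infinite-genus pieces attached near $p$ are made "rigid" — combinatorially asymmetric and distinguishable from one another — so that no new affine symmetry is introduced. Concretely, I would ensure that the grafted pieces break any potential extra symmetry by varying their combinatorial type along the sequence $U$, so that any affine homeomorphism must fix each end individually and hence restrict to an affine automorphism of the underlying Blooming Cantor tree surface, whose derivative therefore lies in $G$; conversely every element of $G$ extends over the grafts because the grafts are placed $G$-equivariantly. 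Verifying $\Gamma(S)=G$ thus reduces to showing that the decoration forces $D(\mathrm{Aff}_+(S))\subseteq G$ and that each graft is compatible with the $G$-action, yielding the reverse inclusion.

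The main obstacle I anticipate is precisely this control of the Veech group in the presence of the countable accumulating family $U$: one must simultaneously (i) keep every element of $G$ realisable as an affine automorphism, which demands a $G$-equivariant placement of the infinitely many grafted ends near $p$, and (ii) prevent the grafts from creating extra affine symmetries or from being permuted by an affine map in a way that would enlarge the derivative image beyond $G$. Reconciling equivariance under $G$ with combinatorial rigidity of the individual pieces is delicate, because the natural $G$-orbit of a graft could force several grafts to be affinely equivalent. The likely resolution, following the strategy already deployed in the paper, is to attach the grafts not at a single $G$-orbit but along an auxiliary structure (such as an infinite $G$-invariant graph or tree of copies of a fundamental domain) so that $G$ acts by permuting labelled grafts while a distinguished combinatorial marking pins down the derivative; verifying tameness throughout, and checking that the end space is homeomorphic to $B\sqcup U$ with $\partial U=\{p\}$ via the Cantor-Bendixson and Richards classifications, completes the argument.
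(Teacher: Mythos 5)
There is a genuine gap, and it sits exactly where you flag the ``main obstacle'': the interaction between $G$-equivariance and the location of the accumulation point $p$. In the surface of Theorem \ref{T:AF} (the assembled puzzle $S_{\mathfrak{P}}$), the subgroup $G<\Gamma(S_{\mathfrak{P}})$ is realised by affine maps that permute the pieces $S_g$, hence act nontrivially on $Ends(S_{\mathfrak{P}})$; the only end fixed by all of $G$ is the secret end of Theorem \ref{pro:3.1}. If you choose $e_0=p$ to be an ordinary end in the Cantor part, then a $G$-equivariant placement of the grafts forces you to graft along the whole orbit $G\cdot e_0$, and the accumulation set of the grafted ends becomes the closure of that orbit, which is infinite whenever $G$ is infinite (indeed it contains the secret end). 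You would then obtain $\partial U$ infinite, not a singleton, so the end space is not homeomorphic to the prescribed $B\sqcup U$. Conversely, if you drop equivariance and make the grafts pairwise combinatorially distinct, as you suggest, then no affine map can carry the graft at $e_0$ to the graft at $g\cdot e_0$, and you lose the inclusion $G<\Gamma(S)$. These two requirements are in direct conflict in your setup, and the ``auxiliary $G$-invariant structure'' you invoke is not specified enough to resolve it.

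The paper resolves the conflict by not grafting onto the finished surface at all: it modifies the \emph{elementary piece} of the puzzle (Construction \ref{cons:3.3}), using Corollary \ref{c:1.4} to choose the distinguished path $\tilde{\gamma}$ so that each piece $S_g$ already has $Ends(S_g)=B_g\sqcup U_g$ with $\partial U_g$ equal to the distinguished end of $S_g$. Since all distinguished ends merge into the single ($G$-fixed) secret end when the puzzle is assembled, $U_{\mathfrak{P}}=\bigsqcup_{g\in G} i_g(U_g)$ is countable, discrete, and accumulates only at the secret end; equivariance is automatic because every piece is an affine copy of one elementary piece, and the Veech group computation is the one already done in Theorem \ref{pro:3.1}. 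Finally, the identification of the resulting end space with the prescribed $B\sqcup U$ is not done by hand but via the uniqueness statement of Theorem \ref{t:1.3} (any two uncountable closed subsets of the Cantor set of this form with $\partial U$ a singleton are homeomorphic), so the only thing to verify is that $\partial U_{\mathfrak{P}}$ is exactly the secret end. If you want to keep your grafting picture, ``grafting at the secret end'' is the right intuition, but making it precise means distributing one copy of the discrete family into each piece $S_g$ --- which is precisely the paper's construction.
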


\begin{figure}[h!]
 \centering
 \includegraphics[scale=0.8]{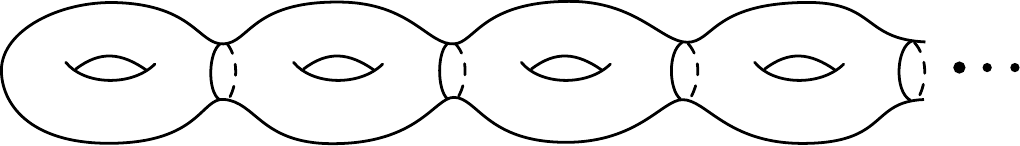}\\
 \caption{\emph{The Loch Ness Monster.}\\}
 \label{Figure1}
\end{figure}

\begin{figure}[h!]
  \centering
  \includegraphics[angle=-90,scale=0.3]{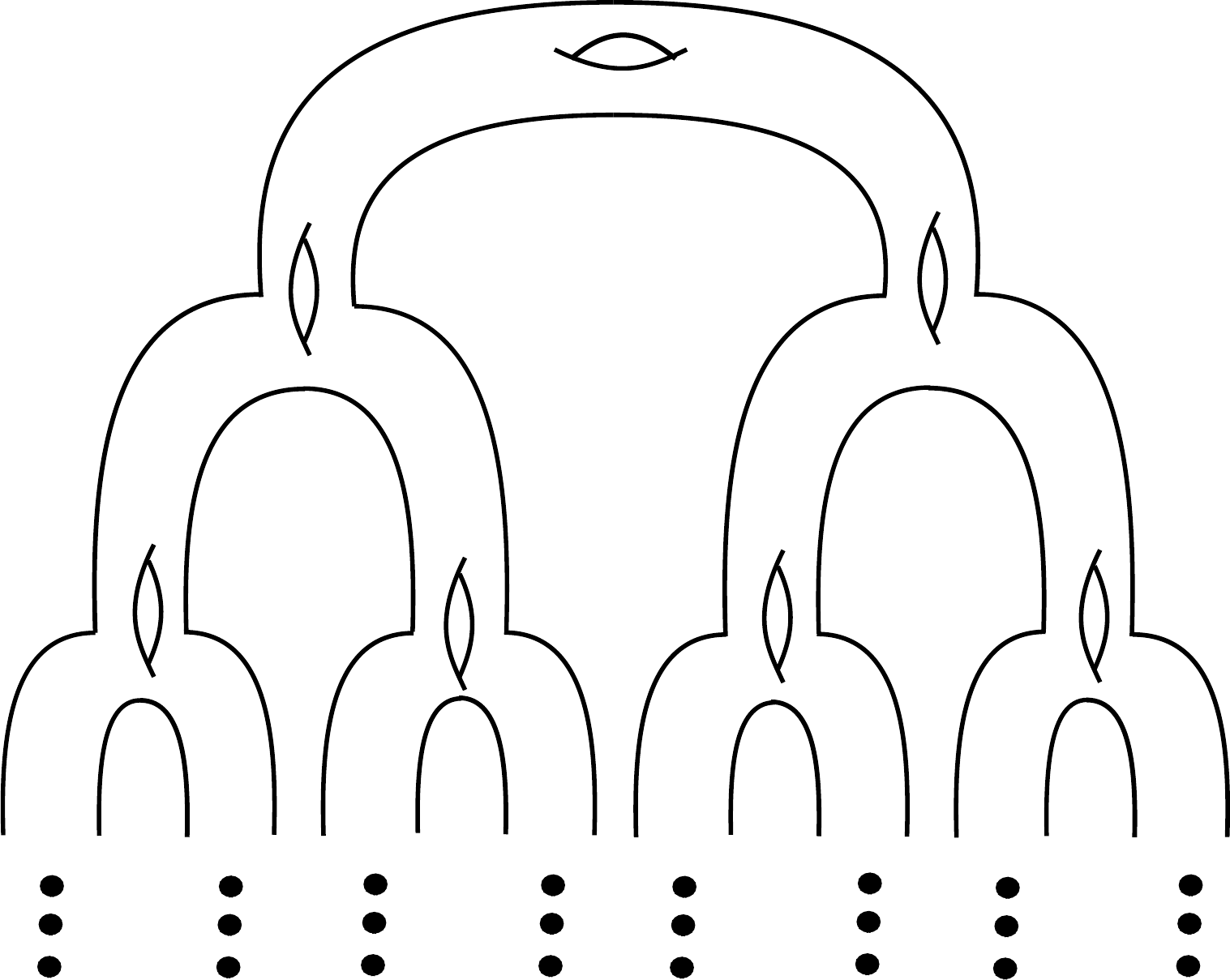}\\
  \caption{\emph{The Blooming Cantor tree.}}
  \label{Figure2}
\end{figure}

The proof of theorem \ref{T:PP} is largely inspired in the proof of theorem \ref{T:PSV2} found in \cite{PSV} and a convenient characterisation of spaces of ends using binary trees that we introduce in the next chapter. On the other hand, theorems \ref{T:AF}, \ref{T:AN} and \ref{T:NN} are consequence of an abstract and general construction that we call \emph{a puzzle} and define in  \S \ref{Section:PUZZLES}. As a matter of fact, some of the main results found in \cite{PSV} can be deduced using this construction. Sadly, puzzles do not work to give an answer to all possible instances of question \ref{Q:1}. For example it is still unknown if every countable subgroup $G<\rm GL_+(2,\mathbb{R})$  can be realised as the Veech group of an infinite type translation surface $S$ whose spaces of ends $Ends_\infty(S)=Ends(S)$ contain only $n\geq 2$ elements. However, given the evidence provided by the results shown in this article we conjecture that question \ref{Q:1} \emph{has always a positive answer}. 
\begin{remark}
All the translation surfaces constructed in this article will have infinite area. We believe that this hypothesis is crucial to answer question \ref{Q:1} positively. For finite area infinite type translation surfaces little is known about the Veech group, see for example \cite{BV}. 
\end{remark}
\textbf{Reader's guide}. In \S \ref{SEC:PRELIMINARIES} we discuss the basic concepts needed to develop the proofs of our main results. Namely, we recall the topological classification of all orientable surfaces, the Cantor-Bendixon derivative and the notion of a characteristic system. We also show that every closed subset $X$ of the Cantor set is the space of ends of a subgraph $T_X$ of an infinite (binary) tree. In particular, we introduce a decomposition of $T_X$ into a countable union of infinite paths which is crucial to the constructions we present. We finish this section by explaining the construction of the "building block" used to define puzzles. In \S \ref{SEC:UNCOUNTABLEVEECH} we prove theorem \ref{T:PP}. Finally, in \S \ref{Section:PUZZLES} we introduce the so called puzzles and show how theorems \ref{T:AF}, \ref{T:AN} and \ref{T:NN} follow from the same general construction. 

\textbf{Acknowledgments}. The authors sincerely thank Jes\'us Muci\~no Raymundo, Fernando Hern\'andez Hern\'andez, Osvaldo Guzm\'an Gonz\'alez, and Ariet Ramos Garc\'ia for their constructive conversations and  valuable help. The first author was partially supported by CONACYT and CCM-UNAM. The second author was generously supported by LAISLA, CONACYT CB-2009-01 127991 and PAPIIT projects IN100115, IN103411 \& IB100212.

\section{Preliminaries}
	\label{SEC:PRELIMINARIES}
We begin this section recalling the topological classification of orientable surfaces, the Cantor-Bendixon derivative and the notion of a characteristic system. We then introduce a model for spaces of ends of surfaces based on subgraphs of an infinite (binary) tree. We finish by explaining the construction of the \emph{elementary piece} that will be used in the next section to define puzzles and give the proof of  theorems \ref{T:AF}, \ref{T:AN} and \ref{T:NN}.

\subsection{Topological classification of surfaces.}
Through this text we will only work with \emph{orientable} surfaces $S$ without boundary. It is a well know fact that any such surface with finitely generated fundamental group is determined up to homeomorphism by its genus and number of punctures. When the fundamental group of the surface is not finitely generated new topological invariants, namely \emph{the spaces of ends of $S$}, are needed to determine the surface up to homeomorphism. In what follows we review these invariants in detail, for they will be used in the proof of our main results. For more details we refer the reader to \cite{Ian}. 


\vspace{1mm}
A \textit{pre-end} of a connected surface $S$ is a nested sequence $U_1\supset U_2\supset \cdots$ of connected open subsets of $S$ such that the boundary of $U_n$ in $S$ is compact for every $n\in\mathbb{N}$ and for any compact subset $K$ of $S$ there exist $l\in\mathbb{N}$ such that $U_{l}\cap K=\emptyset$. We shall denote the pre-end $U_1\supset U_2\supset \cdots$ as $(U_n)_{n\in\mathbb{N}}$. Two such sequences $(U_{n})_{n\in\mathbb{N}}$ and $(U_{n}^{'})_{n\in \mathbb{N}}$ are said to be equivalent if for any $i \in \mathbb{N}$ exist $j \in \mathbb{N}$ such that $U_{j}'\subset U_i$ and viceversa. We denote by $Ends(S)$ the corresponding set of equivalence classes and call each equivalence class $[U_{n}]_{n\in\mathbb{N}}\in Ends(S)$ an \textit{end} of $S$. 
We endow $Ends(S)$ with a topology by specifying a pre-basis as follows: for any open subset $W\subset S$ whose boundary is compact, we define $W^{\ast}:=\{[U_{n}]_{n\in \mathbb{N}}\in Ends(S): W\supset U_{l}\hspace{2mm}\text{for $l$ sufficiently large}\}$.  We call the corresponding topological space \emph{the space of ends of S}. 

\begin{proposicion}
\label{p:1.2}
\cite[Proposition 3]{Ian} The space of ends of a connected surface $S$ is totally disconnected, compact, and Hausdorff. In particular, $Ends(S)$ is homeomorphic to a closed subspace of the Cantor set.
\end{proposicion}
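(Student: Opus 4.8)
The plan is to realise $Ends(S)$ as the inverse limit of a sequence of finite discrete spaces and then invoke the standard fact that such a limit is a compact, Hausdorff, totally disconnected, second-countable space — equivalently, homeomorphic to a closed subspace of the Cantor set $2^\omega$.

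First I would fix an exhaustion. Since a (second countable) connected surface is locally compact, Hausdorff and $\sigma$-compact, it admits a sequence of compact sets $K_1\subset K_2\subset\cdots$ with $K_n\subset\mathrm{int}(K_{n+1})$ and $\bigcup_n K_n=S$. By enlarging each $K_n$ (absorbing the relatively compact complementary regions and smoothing boundaries) I may assume that every connected component of $S\setminus K_n$ is non-relatively-compact and has compact boundary contained in $\partial K_n$; in particular $S\setminus K_n$ has only finitely many components. Write $\Pi_n:=\pi_0(S\setminus K_n)$, a finite set, and let $p_n\colon\Pi_{n+1}\to\Pi_n$ be the map sending a component of $S\setminus K_{n+1}$ to the unique component of $S\setminus K_n$ containing it, which is well defined since $K_n\subset K_{n+1}$.

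Next I would build the comparison map $\phi\colon Ends(S)\to\varprojlim(\Pi_n,p_n)$. Given an end $[U_m]$, the defining property of a pre-end guarantees that for each $n$ there is an index $l$ with $U_l\cap K_n=\emptyset$; since $U_l$ is connected it lies in a single component $C_n\in\Pi_n$, and one checks that $C_n$ is independent of the chosen tail and that $p_n(C_{n+1})=C_n$, so that $(C_n)_n$ is an element of the inverse limit. Conversely a compatible sequence $(C_n)_n$, read as a nested family $C_1\supset C_2\supset\cdots$ of connected open sets with compact boundary that escapes every compact subset of $S$, is itself a pre-end, and this furnishes the inverse to $\phi$; I would check that equivalent pre-ends determine the same sequence and conversely, so that $\phi$ is a bijection. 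Finally I would match topologies: for a component $C\in\Pi_n$ the pre-basic open set $C^{\ast}$ corresponds exactly to the cylinder $\{(D_m)_m : D_n=C\}$, and since these cylinders form a basis of the inverse limit while the sets $C^{\ast}$ generate the end topology, $\phi$ is a homeomorphism.

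With the identification in hand the three properties are immediate: $\varprojlim(\Pi_n,p_n)$ is a closed subset of the compact Hausdorff product $\prod_n\Pi_n$ (each $\Pi_n$ finite discrete), hence compact and Hausdorff, and it is totally disconnected because each cylinder $C^{\ast}$ is clopen — its complement being the finite union of the remaining cylinders over $\Pi_n$ — so distinct points are separated by clopen sets. Second countability of $S$ makes the index set countable and the product metrizable, so $Ends(S)$ is a compact metrizable totally disconnected space, and by the universality of $2^\omega$ for such spaces it embeds as a closed subspace, which gives the final claim. I expect the main obstacle to be the exhaustion step together with the verification that $\phi$ respects the topologies: arranging each $S\setminus K_n$ to have finitely many components with compact boundary, and bookkeeping the independence-of-tail and the cylinder-versus-$C^{\ast}$ correspondence, is where the real content lies, after which the profinite conclusion is routine.
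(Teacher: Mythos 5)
Your proof is correct and is precisely the standard argument behind the cited result: the paper offers no proof of its own here (it simply cites Richards), and the identification of $Ends(S)$ with the inverse limit of the finite discrete spaces $\pi_0(S\setminus K_n)$ over an exhaustion by compact bordered subsurfaces is exactly how that reference establishes it. The points you flag as the real content (arranging finitely many non-relatively-compact complementary components, and checking that the sets $C^{\ast}$ generate the end topology) are indeed the only steps requiring care, and both go through in the standard way.
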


A surface is said to be \textit{planar} if all of its compact subsurfaces are of genus zero. An end $[U_n]_{n\in\mathbb{N}}$ is called \textit{planar} if there exist $l\in\mathbb{N}$ such that $U_l$ is planar. The \textit{genus} of a surface $S$ is the maximum of the genera of its compact subsurfaces. Remark that if a surface $S$ has \textit{infinite genus} there exist no finite set $\rm \mathcal{C}$ of mutually non-intersecting simple closed curves with the property that $S\setminus \mathcal{C}$ is  \textit{connected and planar}. We define $Ends_{\infty}(S)\subset Ends(S)$ as the set of all ends of $S$ which are not planar. It follows from the definitions that $Ends_{\infty}(S)$ forms a closed subspace of $Ends(S)$.

\begin{teorema}[Classification of orientable surfaces]
\label{t:1.1}
\cite[Chapter 5]{Ker} Let $S$ and $S^{'}$ be two orientable surfaces of the same genus. Then $S$ and $S^{'}$ are homeomorphic if only if both $Ends_{\infty}(S)\subset Ends(S) $ and $Ends_{\infty}(S^{'})\subset Ends(S^{'})$ are homeomorphic as nested topological spaces.
\end{teorema}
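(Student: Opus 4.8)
The plan is to prove the two implications separately, the forward one being essentially formal and the converse containing all the substance.

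First I would dispatch the \emph{only if} direction. Suppose $h\colon S\to S'$ is a homeomorphism. Since the notions of pre-end, of equivalence of pre-ends, and of the pre-basic sets $W^{\ast}$ are all phrased purely in terms of open sets with compact boundary and of compact subsets of the surface, $h$ carries any nested sequence $(U_n)_{n\in\mathbb{N}}$ representing an end of $S$ to the nested sequence $(h(U_n))_{n\in\mathbb{N}}$ representing an end of $S'$, respects the equivalence relation, and sends $W^{\ast}$ to $h(W)^{\ast}$. Hence $h$ induces a homeomorphism $h_{\ast}\colon Ends(S)\to Ends(S')$. Because the genus is the supremum of the genera of compact subsurfaces and planarity of an end amounts to the existence of a planar representative $U_l$, both are topological invariants preserved by $h$; thus $S$ and $S'$ share the same genus and $h_{\ast}$ restricts to a homeomorphism $Ends_{\infty}(S)\to Ends_{\infty}(S')$, i.e.\ the nested pairs are homeomorphic.

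For the converse I would follow the exhaustion strategy of Ker\'ekj\'art\'o and Richards. Fix a homeomorphism of nested pairs $\phi\colon (Ends(S),Ends_{\infty}(S))\to (Ends(S'),Ends_{\infty}(S'))$ and assume $S$ and $S'$ have equal genus. The first step is to choose, for each surface, a \emph{canonical exhaustion} $K_1\subset K_2\subset\cdots$ by compact connected bordered subsurfaces with $\bigcup_n K_n=S$, arranged so that no component of $S\setminus K_n$ is relatively compact and so that each boundary curve of $K_n$ cuts off a single ``branch'' of the space of ends. The key local fact is that every compact connected orientable bordered surface is determined up to homeomorphism by its genus and its number of boundary components; consequently each shell $\overline{K_{n+1}\setminus K_n}$ is governed by finitely many integers. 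Next I would align the two exhaustions using $\phi$. To each exhaustion one associates a decorated tree recording how the ends partition as one passes from $K_n$ to $K_{n+1}$, each shell being labelled by the genus it contributes, with $Ends(S)$ recovered as the space of ends of this tree and $Ends_{\infty}(S)$ as the branches along which the labelling genera are unbounded. Using that $\phi$ is a homeomorphism of pairs and that $Ends(S)$ is compact, totally disconnected and Hausdorff (Proposition~\ref{p:1.2}), I would refine both exhaustions until their decorated trees become isomorphic: the end-partitions correspond under $\phi$ and, since the total genera agree and $\phi$ matches $Ends_{\infty}(S)$ with $Ends_{\infty}(S')$, the genus can be distributed shell-by-shell compatibly. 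With the trees identified I would build the homeomorphism inductively, extending $f_n\colon K_n\to K_n'$ across the next shell by the classification of compact bordered surfaces, choosing the extension to agree with $f_n$ on $\partial K_n$; the union $f=\bigcup_n f_n$ is then a homeomorphism $S\to S'$ realising $\phi$.

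The hard part will be the simultaneous bookkeeping in the alignment step: one must arrange the two exhaustions so that the end-splitting combinatorics \emph{and} the accumulation of genus match at once. In particular, for an infinite-genus surface the labelling genera must be unbounded precisely along the branches converging to ends of $Ends_{\infty}(S)$, and $\phi$ must be exploited to force these branches to correspond; handling the ends carrying infinite genus, where the genus cannot be accounted for in any single shell, is the delicate point. A secondary technical obstacle is guaranteeing that the inductively chosen extensions agree on the shared boundary curves, so that the limit map $f$ is well defined, continuous, and has continuous inverse; this relies on the freedom to extend a prescribed boundary homeomorphism over a shell of matching genus and boundary count.
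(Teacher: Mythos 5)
This theorem is not proved in the paper: it is imported verbatim from the classical literature (Ker\'ekj\'art\'o, and in the orientable case really Richards, \emph{On the classification of noncompact surfaces}), so there is no in-paper argument to compare yours against. Judged on its own terms, your \emph{only if} direction is complete and correct: ends, their equivalence, the pre-basic sets $W^{\ast}$, genus, and planarity of an end are all defined by purely topological data, so a homeomorphism $S\to S'$ induces a homeomorphism of the nested pairs. Your converse correctly identifies the classical strategy (canonical exhaustions, a decorated tree recording end-splitting and genus per shell, inductive extension over shells using the classification of compact bordered surfaces).

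However, the converse as written is a plan rather than a proof: the step you yourself flag as ``the hard part'' --- refining the two exhaustions until the decorated trees are isomorphic, with the genus distributed shell-by-shell compatibly with $\phi$ and with unbounded genus occurring exactly along branches converging to $Ends_{\infty}$ --- is precisely where all the content lies, and it is not carried out. Richards avoids the simultaneous two-sided bookkeeping you describe by a detour through a \emph{canonical model}: from the triple consisting of the genus and the nested pair $Ends_{\infty}\subset Ends$ he builds one standard surface (a sphere with a closed totally disconnected set removed and handles accumulating exactly on $Ends_{\infty}$), and then shows that \emph{every} surface with those invariants admits an exhaustion homeomorphic to the standard one; two arbitrary surfaces are then compared through the model rather than with each other. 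If you want to complete your direct alignment instead, you must at least make explicit (i) how compactness and total disconnectedness of $Ends(S)$ (Proposition~\ref{p:1.2}) let you choose $K_n$ so that each complementary component corresponds to one clopen set in a refining partition of $Ends(S)$ that $\phi$ carries to the analogous partition of $Ends(S')$, and (ii) why, after discarding finitely much genus into a first compact piece, the remaining genus can always be pushed outward along the $Ends_{\infty}$-branches so that corresponding shells have equal genus; both points are true but neither is automatic, and the second fails if one is careless about ends in $Ends(S)\setminus Ends_{\infty}(S)$ that are limits of ends carrying genus.
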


Of special interest in this paper is the surface of infinite genus and only one end and the surface without planar ends whose ends space is homeomorphic to the Cantor set. The former is known as \emph{the Loch Ness Monster} (Figure \ref{Figure1}) and the latter as the \emph{Blooming Cantor tree} (Figure \ref{Figure2}). The nomenclature is due to \cite{PSul} and \cite{Ghys} respectively. Remark that a surface $S$ has only one end if only if for all compact subset $K\subset S$ there exist a compact $K^{'}\subset S$ such that $K\subset K^{'}$ and $S\setminus K^{'}$ is connected, see \cite{Spec}.





\subsection{Cantor-Bendixson derivative}
We recall briefly Cantor-Bendixon's theorem and how the Cantor-Bendixon rank classifies countable compact Polish spaces up to homeomorphism.


\begin{teorema}[Cantor-Bendixson, \cite{Kec}]
\label{t:1.2}
Let $X$ be a Polish space. Then $X$ can be uniquely written as $X=B\sqcup U$, where $B$ is perfect  and $U$ is  countable and open.
\end{teorema}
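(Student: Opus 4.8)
The plan is to produce the decomposition directly from the \emph{condensation points} of $X$ and then to read off uniqueness from a characterisation of the perfect part. Recall that a point $x\in X$ is a condensation point if every open neighbourhood of $x$ is uncountable. Write $C\subset X$ for the set of condensation points and set $B:=C$ and $U:=X\setminus C$. Since $X$ is Polish it is second countable; I would fix a countable basis $\mathcal{B}$. First I would check that $U$ is open and countable: if $x\notin C$ then some basic open $V\in\mathcal{B}$ with $x\in V$ is countable, and every point of $V$ then also fails to be a condensation point, so $V\subset U$; this shows $U$ is open, and moreover $U$ is the union of those countably many members of $\mathcal{B}$ that happen to be countable, hence $U$ is itself countable.

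Next I would verify that $B=C$ is perfect, i.e. closed with no isolated points. It is closed, being the complement of the open set $U$. To see it has no isolated points, suppose $x\in C$ were isolated in $C$; then there is an open $V\ni x$ with $V\cap C=\{x\}$, so $V\setminus\{x\}\subset U$ is countable, whence $V$ is countable, contradicting $x\in C$. Thus $X=B\sqcup U$ is a decomposition of the required form, settling existence.

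For uniqueness, suppose $X=B'\sqcup U'$ with $B'$ perfect and $U'$ countable and open; I claim $B'=C$, which forces $U'=U$ as well. The inclusion $C\subset B'$ is immediate: a point of the countable open set $U'$ has a countable open neighbourhood and so cannot be a condensation point. For the reverse inclusion I would show that every point of the perfect set $B'$ is a condensation point of $X$: given $x\in B'$ and an open $V\ni x$, the set $V\cap B'$ is a nonempty relatively open subset of the perfect Polish space $B'$, and I would argue that it is uncountable, whence $V$ is uncountable and $x\in C$. Combining the two inclusions gives $B'=C=B$.

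The main obstacle is precisely this last step, namely that a nonempty perfect subset of a Polish space is uncountable. This is where completeness (and not merely second countability) is essential --- note that $\mathbb{Q}$ is a countable space with no isolated points --- and it requires the standard Cantor scheme construction: one recursively builds a tree of nonempty open sets $V_s$ indexed by finite binary strings $s$, with disjoint children, vanishing diameters and nested closures each meeting $B'$, and then sends each infinite branch to the unique point in the corresponding intersection of closures. Completeness guarantees these intersections are nonempty singletons, and the resulting map $2^{\omega}\to B'$ is a continuous injection, forcing $|B'|\geq 2^{\aleph_0}$. I expect this embedding of the Cantor set to be the technical heart of the argument, the remainder being bookkeeping with the countable basis.
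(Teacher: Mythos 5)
Your proposal is correct and complete in outline: the decomposition via condensation points, the countability of $U$ from second countability, the perfection of $B$, and the reduction of uniqueness to the fact that a nonempty perfect Polish space is uncountable (via a Cantor scheme) are all sound. The paper gives no proof of this statement --- it simply cites Kechris --- and your argument is precisely the standard one found in that reference, so there is nothing to compare beyond noting the agreement.
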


\begin{corolario}\label{c:1.1}
Any uncountable Polish space contains a closed subset homeomorphic to the Cantor set.
\end{corolario}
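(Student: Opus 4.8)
The plan is to deduce this directly from the Cantor-Bendixson theorem (Theorem \ref{t:1.2}) together with the classical fact that every nonempty perfect Polish space contains a copy of the Cantor set. First I would write the uncountable Polish space $X$ as $X = B \sqcup U$ with $B$ perfect and $U$ countable and open, as guaranteed by Theorem \ref{t:1.2}. Since $U$ is countable while $X$ is uncountable, the set $B = X \setminus U$ must be uncountable; in particular it is nonempty. Moreover $B$ is closed in $X$ (being the complement of the open set $U$), so it is itself a Polish space, and by construction it is perfect. Thus it suffices to embed the Cantor set $2^{\omega}$ as a closed subset of the nonempty perfect Polish space $B$.

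For this I would fix a complete metric $d$ on $B$ and build a Cantor scheme indexed by finite binary strings $s \in 2^{<\omega}$: a family of nonempty open subsets $\{V_s\}$ of $B$ satisfying $\overline{V_{s0}} \cup \overline{V_{s1}} \subset V_s$, $\overline{V_{s0}} \cap \overline{V_{s1}} = \emptyset$ and $\operatorname{diam}(V_s) \le 2^{-|s|}$. The construction is inductive: given $V_s$, perfectness of $B$ guarantees that the nonempty open set $V_s$ contains at least two distinct points, around which one can choose two disjoint open balls of small enough radius whose closures lie in $V_s$; these serve as $V_{s0}$ and $V_{s1}$.

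To each infinite branch $x \in 2^{\omega}$ the nested sequence of closed sets $\overline{V_{x|_n}}$ has diameters tending to $0$, so by completeness their intersection is a single point $f(x) \in B$. The resulting map $f : 2^{\omega} \to B$ is injective (branches separate because of the disjointness condition) and continuous (because of the diameter condition), hence a homeomorphism onto its image, since $2^{\omega}$ is compact and $B$ is Hausdorff. Finally, $f(2^{\omega})$ is compact, and therefore closed in the Hausdorff space $X$; this gives the desired closed copy of the Cantor set inside $X$.

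The routine parts are the verifications of injectivity and continuity of $f$; the one place where the hypotheses are genuinely used is the inductive step of the Cantor scheme, where perfectness is exactly what allows each $V_s$ to be split into two disjoint nonempty open pieces. I expect this splitting step to be the crux of the argument, with completeness of the metric playing the complementary role of ensuring that each branch converges to an actual point of $B$.
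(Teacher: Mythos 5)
Your argument is correct and follows exactly the route the paper intends: the corollary is stated as an immediate consequence of the Cantor--Bendixson theorem (Theorem \ref{t:1.2}), with the decomposition $X=B\sqcup U$ reducing everything to the classical fact that a nonempty perfect Polish space contains a closed copy of $2^{\omega}$, which you establish by the standard Cantor scheme. Nothing is missing.
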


Given a topological space $X$, the \textit{Cantor-Bendixson derivate} of $X$ is the set $X^{'}:=\{ x\in X: x \text{ is a limit point of } X\}$. For every $k\in\mathbb{N}$, \emph{the $k$-th Cantor-Bendixon derivative of $X$} is defined as $X^k:=(X^{k-1})'$, where $X^0=X$. A countable Polish space is said to have \emph{characteristic system} $(k,1)$ if $X^k\neq\emptyset$, $X^{k+1}=\emptyset$ and the cardinality of $X^k$ is exactly one.

\begin{remark}
	\label{c:1.2}
Any countable Polish space with characteristic system $(k,1)$ is homeomorphic to the ordinal number\footnote{Recall that ordinal number can be made into a topological space by endowing it with the order topology.} $\omega^k+1$, where $\omega$ is the least infinite ordinal. In particular, every two countable Polish spaces with characteristic system $(k,1)$ are homeomorphic. For further details, see \cite{MaSi}.
\end{remark}

The following results will be used in the proof of theorem \ref{T:NN}.

\begin{teorema}
\label{t:1.3}
Let $X_{1}=B_{1}\sqcup U_{1}$ and $X_{2}=B_{2}\sqcup U_{2}$ be two uncountable closed subset of  the Cantor set. Suppose that both  boundaries $\partial U_{1}$ and $\partial U_{2}$ are just singletons. Then $X_1$ and $X_2$ are homeomorphic.
\end{teorema}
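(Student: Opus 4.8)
The plan is to build an explicit homeomorphism $h\colon X_1\to X_2$ after first pinning down the topological type of each piece of the two decompositions. First I would record the structure of the factors. Since $B_i$ is a nonempty perfect subset of $2^\omega$ it is compact, metrizable and totally disconnected, hence homeomorphic to the Cantor set. The set $U_i$ is countable, discrete and open in $X_i$, and because its boundary $\partial U_i=\overline{U_i}\setminus U_i$ is the single point $p_i$, the set $U_i$ must be infinite and its closure $\overline{U_i}=U_i\cup\{p_i\}$ is homeomorphic to the convergent sequence $\omega+1$, with $p_i\in B_i$ its unique accumulation point. I would fix enumerations $U_i=\{x^i_n\}_{n\in\mathbb{N}}$ so that $x^i_n\to p_i$ as $n\to\infty$.

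The key step is to match the two \emph{pointed} Cantor sets. Since $2^\omega\cong(\mathbb{Z}/2)^\omega$ is a topological group it is homogeneous, so every point of a Cantor set can be carried to any other by a self-homeomorphism. Applying this to $B_1$ and $B_2$ I obtain a homeomorphism $\phi\colon B_1\to B_2$ with $\phi(p_1)=p_2$.

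Finally I would glue. Define $h\colon X_1\to X_2$ by $h|_{B_1}=\phi$ and $h(x^1_n)=x^2_n$. This is clearly a bijection, and I claim it is continuous. At each $x^1_n$ continuity is automatic, since these points are isolated in $X_1$. At a point $b\in B_1\setminus\{p_1\}$ any sequence of $X_1$ converging to $b$ is eventually contained in $B_1$, because $U_1$ accumulates only at $p_1\neq b$; hence continuity at $b$ follows from that of $\phi$. The only delicate point—and the main thing to check—is continuity at $p_1$: given $z_k\to p_1$ in $X_1$, I split the $z_k$ into their $B_1$-terms, whose images converge to $\phi(p_1)=p_2$ by continuity of $\phi$, and their $U_1$-terms $x^1_{n_k}$; since $U_1$ accumulates only at $p_1$ the indices satisfy $n_k\to\infty$, whence $h(x^1_{n_k})=x^2_{n_k}\to p_2$. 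Thus $h(z_k)\to p_2=h(p_1)$, and $h$ is continuous at $p_1$.

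Therefore $h$ is a continuous bijection from the compact space $X_1$ to the Hausdorff space $X_2$, and hence a homeomorphism. The argument is elementary once the convergent-sequence structure of $\overline{U_i}$ has been extracted from the hypothesis that $\partial U_i$ is a single point; that structural reduction, together with the homogeneity of the Cantor set, is really the heart of the matter, and the compactness–to–Hausdorff trick avoids having to verify continuity of $h^{-1}$ by hand.
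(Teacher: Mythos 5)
Your proposal is correct and follows essentially the same route as the paper: both match the two pointed Cantor sets $(B_1,p_1)$ and $(B_2,p_2)$ via homogeneity of $2^\omega$ as a topological group, identify $\overline{U_1}$ with $\overline{U_2}$ as convergent sequences sharing the matched boundary point, glue the two homeomorphisms, check continuity only at the gluing point, and invoke the compact-to-Hausdorff argument to avoid verifying the inverse. The only (welcome) difference is that you make explicit the structural facts the paper leaves implicit, namely that $B_i$ is a Cantor set by Brouwer's characterisation and that $\overline{U_i}\cong\omega+1$.
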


\begin{proof}
The Cantor set $2^{\omega}$ is identified with the topological group $\prod_{i\in\mathbb{N}}\mathbb{Z}_2^i$ (see \cite[p. 50]{Kec}). Recall that this group acts by homeomorphisms (freely and transitively) on itself. Let us denote this action by:
\begin{equation}\label{eq:2}
\begin{array}{ccccl}
 \alpha & : & \prod\limits_{i\in\mathbb{N}} \mathbb{Z}_2^i\times \prod\limits_{i\in\mathbb{N}}\mathbb{Z}_2^i & \to & \prod\limits_{i\in\mathbb{N}}\mathbb{Z}_2^i, \\
            &   &                                                                                                                                                      &        &            \\
            &   &([x_n]_{n\in\mathbb{N}},[y_n]_{n\in\mathbb{N}})                                                 & \to &  [x_n+y_n]_{n\in\mathbb{N}}.
\end{array}
\end{equation}
By hypothesis there are homeomorphisms 
$f_j:B_j\to \prod_{i\in\mathbb{N}}\mathbb{Z}_2^i$, $j=1,2$ and an element $[z_n]_{n\in \mathbb{N}}\in \prod_{i\in\mathbb{N}}\mathbb{Z}_2^i$ such that the homeomorphism $f_{2}^{-1}\circ \alpha_{|[z_n]_{n\in\mathbb{N}}}\circ f_1$ sends $\partial U_1$ to $\partial U_2$. On the other hand, if we denote $\overline{U_j}$, the closure of $U_j$ in $X_j$, $j=1,2$, there exists a homeomorphism $h:\overline{U_1}\to\overline{U_2}$. In particular $h(\partial U_1)=\partial U_2$. Hence
\[
\begin{array}{ccccl}
F & : &  X_1 & \to & X_2 \\
   &   & x     & \to & \begin{cases}
                                    f(x), & \text{if $x\in B_1$,}\\
                                    h(x), & \text{if $x\in \overline{U_1}$,}
                               \end{cases}
\end{array}
\]
is well a defined bijection. We claim that  $F$ is the desired homeomorphism. Let $V$ be an open set in $X_2$ containing $F(\partial U_1)=\partial U_2$. There are open sets $W_1$ and $W_2$ in $X_1$ containing the point $\partial U_1$ such that $f(W_1\cap B_1)\subset V\cap B_2$ and $h(W_2\cap \overline{U_1})\subset V\cap\overline{U_2}$. Then $F(W_1\cap W_2)\subset V$. This proves that function $F$ is \emph{continuous} at $\partial U_1$. Since $F$ is defined by glueing homeomorphisms at this point, this is sufficient to prove that $F$ is continuous. On the other hand, the function $F$ is \emph{closed} because every continuous function from a compact and Hausdorff space onto a Hausdorff space is closed (see \cite[p.226]{Dugu}). Therefore $F$ is a homeomorphism.
\end{proof}

\subsection{The Cantor Binary tree }
	\label{SS:CantorBinaryTree}
One of the fundamental objects we use in the construction of infinite translation surfaces with prescribed Veech group is the infinite 3-regular tree. This graph plays a distinguished role for the space of ends of any surface is homeomorphic to a subspace of its space of ends.  We give binary coordinates to the vertex set of the infinite 3-regular tree, for we use these in a systematic way during the proofs of our main results. 

For every $n\in\mathbb{N}$ let $2^{n}:=\{D_s: D_s\in \prod_{i=1}^n\{0,1\}_i\}$ and $\pi_i:2^n \to \{0,1\}$ be the projection onto the $i$-th coordinate. We define $V:=\{ D_s: D_s\in 2^n \text{ for some } n\in \mathbb{N}\}$ and $E$ as the union of $((0),(1))$ with the set $\{ (D_s,D_t) : D_s\in 2^{n}$ and $D_t\in 2^{n+1}$ for some $n\in\mathbb{N}$, and $\pi_i(D_s) =\pi_i(D_t)$  for every $i\in\{1,...,n\}\}$. The infinite 3-regular tree with vertex set $V$ and edges $E$ will be called the \emph{Cantor binary tree} and denoted by $T2^\omega$, see Figure \ref{Figure3}.


\begin{figure}[h!]
 \centering
 \includegraphics[scale=0.5]{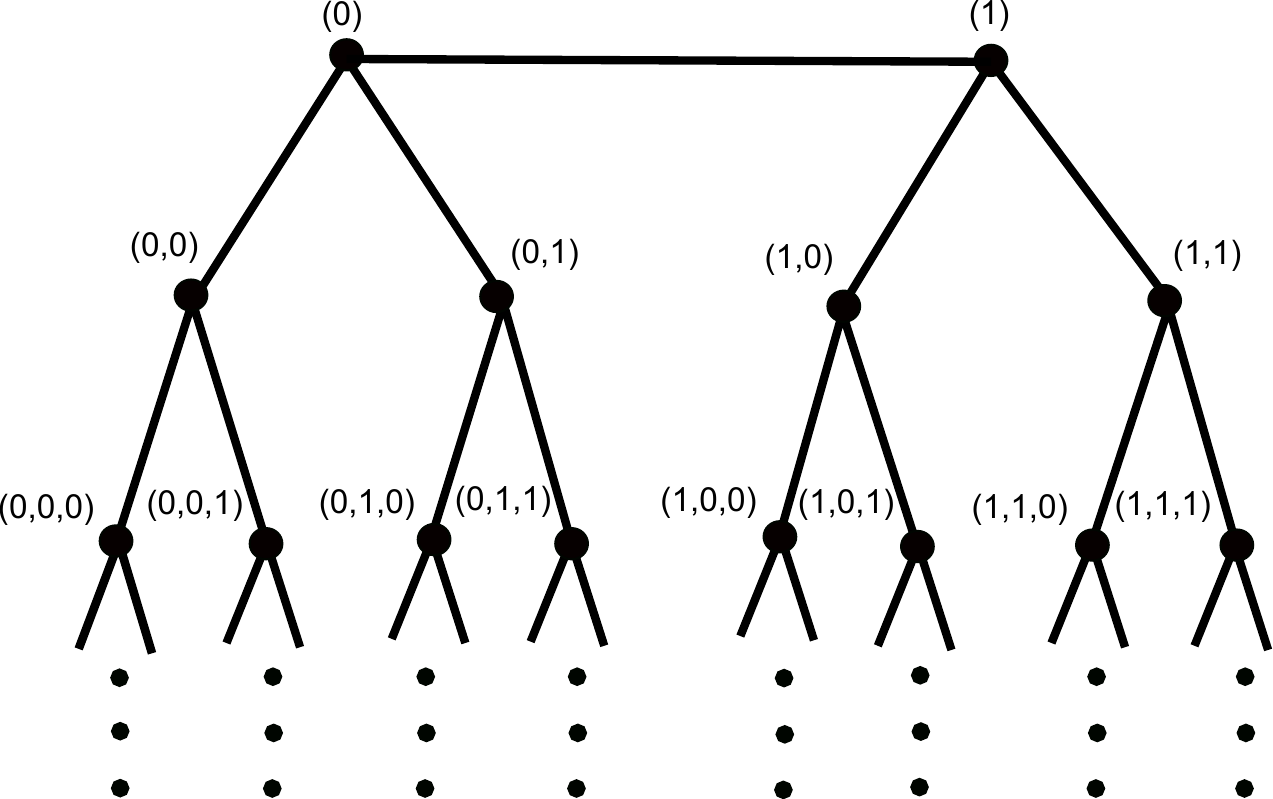}\\
 \caption{\emph{Cantor binary tree  $T2^{\omega}$.}}
 \label{Figure3}
\end{figure}

\begin{remark}
	\label{REM:InfPathsGraph}
Let $(v_n)_{n\in\mathbb{N}}$, $v_n\in 2^\omega$ be an infinite simple path in $T2^\omega$. If we define $V_n$ as the connected component of $T2^{\omega}\setminus\{v_{n}\}$ such that $v_{n+1}\in V_n$, then $[V_n]\in Ends(T2^\omega)$ is completely determined by $(v_n)_{n\in\mathbb{N}}$. Hence, if we endow $\{0,1\}$ and $2^\omega:=\prod\limits_{i\in\mathbb{N}} \{0,1\}_{i} $ with the discrete and product topologies respectively, the map:
\begin{equation}\label{eq:4}
\begin{array}{ccccc}
f  & :  & \prod\limits_{i\in\mathbb{N}} \{0,1\}_{i} & \to &  Ends(T2^{\omega}) \\
    &    & (x_{n})_{n\in\mathbb{N}}             &  \to & ( v_n:=(x_1,\ldots,x_{n})),
\end{array}
\end{equation}
is a homeomorphism between the standard binary Cantor set and the space of ends of $T2^\omega$. 
\end{remark}

\begin{remark}
	\label{r:0:2}
Sometimes we will abuse notation and denote by $f((x_n)_{n\in\mathbb{N}})$	both the end defined by the infinite path $( v_n:=(x_1,\ldots,x_{n}))_{n\in\mathbb{N}}$ and the infinite path $(v_n:=(x_1,\ldots,x_{n}))_{n\in\mathbb{N}}$ in $T2^\omega$.
\end{remark}




\begin{lema}\label{l:1.1}
Let $X$ be a closed subset of the Cantor set. Then there exist a connected subgraph $T_{X}\subset T2^{\omega}$ such that its ends space $Ends(T_X)$ is homeomorphic to $X$.
\end{lema}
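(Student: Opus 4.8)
The plan is to build $T_X$ as the subtree of $T2^\omega$ spanned by all finite binary strings that are initial segments of points of $X$, where we view $X$ as a subset of $Ends(T2^{\omega})$ via the homeomorphism $f$ of Remark \ref{REM:InfPathsGraph}. Concretely, I would declare a vertex $(x_1,\ldots,x_n)\in 2^n$ to belong to $T_X$ precisely when the cylinder of sequences extending it meets $X$, that is, when there is some $(x_m)_{m\in\mathbb{N}}\in X$ having $(x_1,\ldots,x_n)$ as its length-$n$ initial segment; the edges of $T_X$ are the edges of $T2^\omega$ joining two such vertices, including $((0),(1))$ whenever both of its endpoints qualify.

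First I would check connectedness. The vertex set just defined is closed under passing to initial segments: if $(x_1,\ldots,x_n)$ extends to a point of $X$, then so does every $(x_1,\ldots,x_k)$ with $k\leq n$. Hence each vertex is joined inside $T_X$ by a decreasing-length path to the top vertex $(x_1)\in\{(0),(1)\}$, and whenever both $(0)$ and $(1)$ occur they are linked by the edge $((0),(1))$. This makes $T_X$ a connected subtree of $T2^\omega$.

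Next I would identify the ends. Since $T2^\omega$ is locally finite, so is $T_X$, and above any vertex there are only finitely many vertices; therefore every ray eventually descends monotonically, so each end of $T_X$ is represented by a unique infinite downward branch, i.e. an infinite string $(x_m)_{m\in\mathbb{N}}$ all of whose finite initial segments are vertices of $T_X$. The crucial point, and the place where the hypothesis that $X$ is closed enters, is that such strings are exactly the points of $X$: if every $(x_1,\ldots,x_n)$ extends to some $y^{(n)}\in X$, then $y^{(n)}\to (x_m)_{m\in\mathbb{N}}$ in the product topology, and closedness of $X$ gives $(x_m)_{m\in\mathbb{N}}\in X$; the converse is immediate. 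This yields a bijection $g:X\to Ends(T_X)$ sending $(x_m)_{m\in\mathbb{N}}$ to the end of $T_X$ determined by the ray $(v_n:=(x_1,\ldots,x_n))_{n\in\mathbb{N}}$, in analogy with the map $f$ of Remark \ref{REM:InfPathsGraph}.

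Finally I would promote $g$ to a homeomorphism. Both spaces are compact, metrizable and Hausdorff (for $X$ this is clear, and for $Ends(T_X)$ it is the graph analogue of Proposition \ref{p:1.2}), so it suffices to show $g$ is continuous, and for that I would compare bases. For a vertex $v_n=(x_1,\ldots,x_n)$ lying on the branch of a point of $X$, the set $V_{v_n}^{\ast}$ of ends whose ray eventually lies in the component of $T_X\setminus\{v_n\}$ below $v_n$ is a basic neighborhood of $g((x_m)_{m\in\mathbb{N}})$, and $g^{-1}(V_{v_n}^{\ast})$ is exactly the cylinder $\{(y_m)_{m\in\mathbb{N}}\in X:(y_1,\ldots,y_n)=(x_1,\ldots,x_n)\}$, which is open in $X$ and which ranges over a basis of the topology of $X$. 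Hence $g$ is a continuous bijection between compact Hausdorff spaces, and therefore a homeomorphism. The main thing to be careful about is precisely this matching of the end topology with the product topology, together with the closedness argument that rules out spurious ends; connectedness and local finiteness are routine.
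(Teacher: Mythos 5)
Your construction of $T_X$ is exactly the one the paper uses: the paper's entire proof consists of defining $T_X$ as the union of the paths $f((x_n)_{n\in\mathbb{N}})$ over $(x_n)_{n\in\mathbb{N}}\in X$ together with the edge $((0),(1))$, which coincides with your ``initial segments of points of $X$'' description, and it omits all verification. Your write-up correctly supplies the missing details --- in particular the observation that closedness of $X$ is what prevents spurious ends, and the matching of end neighborhoods with cylinders --- so the proposal is correct and essentially the same argument, just carried out in full.
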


\begin{proof}
Without loss of generality we suppose that $X\subset \prod\limits_{i\in\mathbb{N}} \{0,1\}_{i}$. Define:
\begin{equation}
	\label{e:0:4}
T_X:=\left(\bigcup_{(x_n)_{n\in\mathbb{N}}\in X} f((x_n)_{n\in\mathbb{N}})\right)\cup ((0),(1)) \subset T2^{\omega},
\end{equation}
where $f$ is the map defined in (\ref{eq:4}).
\end{proof}

The constructions of translations surfaces that we present follow at most a countable number of steps. In order to integrate the graphs $T_X$ to these constructions it is important to be able to express them as a \emph{countable} union of infinite paths which intersect at most at a vertex. 


\begin{example}
	\label{E:0:1}
Define $\mathfrak{T}$ as the union of $((0), (1),(1,1),(1,1,1),...)$ and $((0),(0,0),(0,0,0),...)$ with the countable set of infinite paths:
$$
\{((D_s), (D_s,x),(D_s,x,x),(D_s,x,x,x),...)\in T2^{\omega} :  D_s\in 2^n, x \in\{0,1\}, \text{ and }  \pi_n(D_s)\neq x\}_{n\in\mathbb{N}},
$$
then it is easy to see that $T2^{\omega}:=\bigcup_{\gamma\in\mathfrak{T}}\gamma$. Moreover, any two infinite paths in $\mathfrak{T}$ are either disjoint or intersect in just one vertex (see Figure \ref{Figure4}).
\end{example}


\begin{figure}[h!]
 \centering
 \includegraphics[scale=0.4]{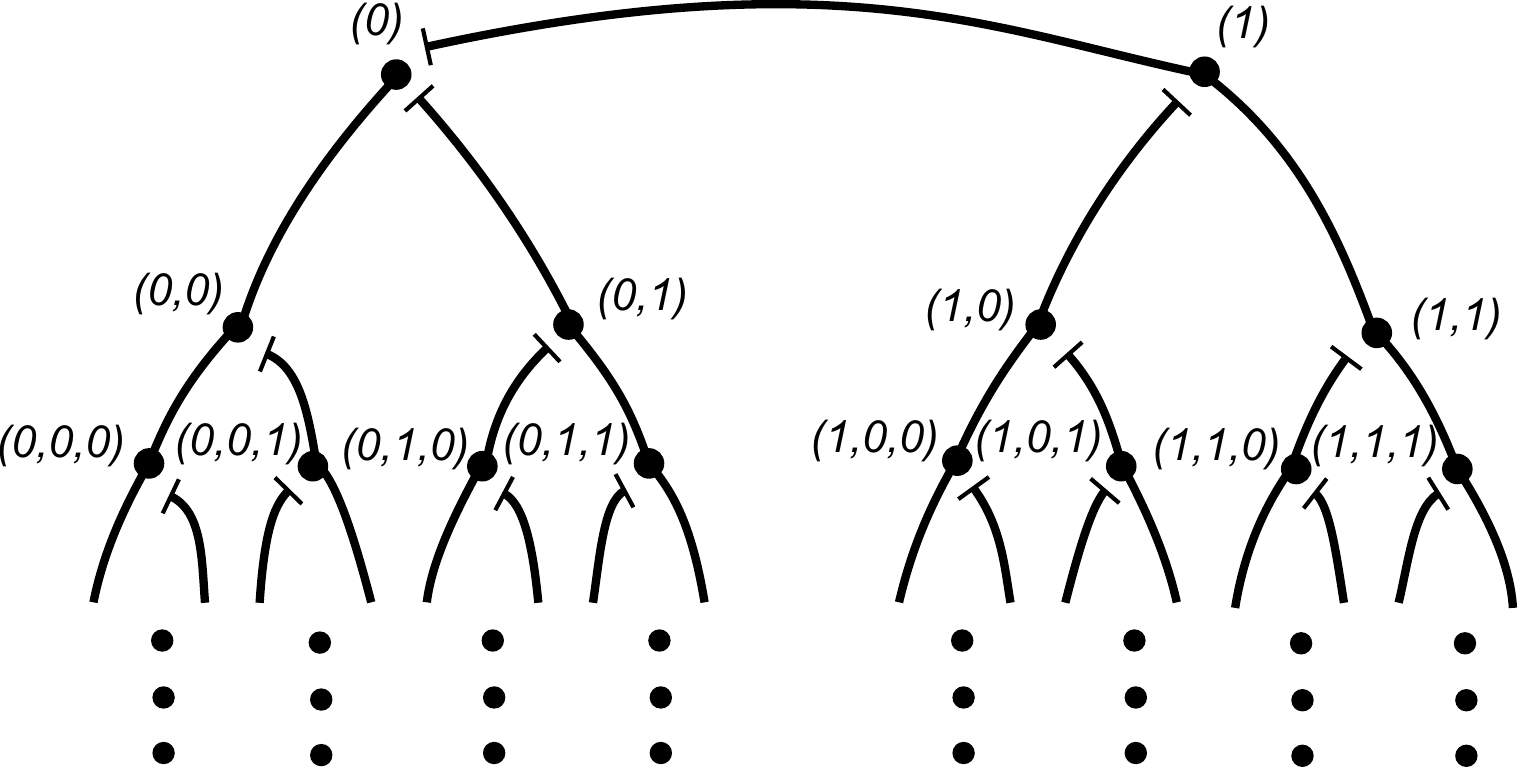}\\
 \caption{\emph{Countable family of infinite paths $\mathfrak{T}$.}}
 \label{Figure4}
\end{figure}

\begin{lema}\label{l:1.2}
Let $X\subset 2^\omega$ be a closed of the Cantor set and let $T_X$ be the subgraph given by lemma \ref{l:1.1}. Then there exist a countable set of infinite paths $\mathfrak{T}_X$ such that $T_{X}=\bigcup_{\gamma\in\mathfrak{T}_X}\gamma$. Moreover, every two infinite paths $\gamma,\gamma'\in\mathfrak{T}_X$ intersect in at most one vertex. 
\end{lema}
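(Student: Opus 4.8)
The plan is to produce the family $\mathfrak{T}_X$ by an explicit, tailored decomposition of $T_X$ into rays, rather than by intersecting the global family $\mathfrak{T}$ of Example \ref{E:0:1} with $T_X$. Intersecting does not work: the rays of $\mathfrak{T}$ all converge to \emph{eventually constant} ends of $T2^{\omega}$, and $X$ may contain no such point (e.g. $X$ a single non-eventually-constant sequence), so $\mathfrak{T}$ may have no ray inside $T_X$ at all. Before constructing $\mathfrak{T}_X$ I would record two soft reductions. First, it suffices to exhibit an \emph{edge-partition} of $T_X$ into rays (one-sided infinite simple paths): countability is then automatic, since $T_X\subset T2^{\omega}$ has only countably many edges and the rays are pairwise edge-disjoint; and the ``at most one common vertex'' condition is automatic as well, because in a tree two edge-disjoint simple paths sharing two vertices $a\neq b$ would both contain the unique $a$--$b$ geodesic and hence share an edge. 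Second, assuming $X\neq\emptyset$ (otherwise $T_X$ is the single edge $((0),(1))$ and the statement is vacuous), I would split $T_X$ along its top edge as $T_X=T_X^{(0)}\cup\{((0),(1))\}\cup T_X^{(1)}$, where $T_X^{(c)}$ is the subtree of vertices whose first coordinate is $c$, rooted at $(c)$.

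The crucial structural observation, which I would verify next, is that each $T_X^{(c)}$ is a \emph{pruned} rooted tree: every non-root vertex tops an infinite subtree. Indeed, by (\ref{e:0:4}) a vertex $v=(c,x_2,\dots,x_n)$ lies in $T_X$ precisely when it is a prefix of some point $(x_k)_{k\in\mathbb{N}}\in X$, and then the strictly downward ray $f((x_k)_{k\in\mathbb{N}})$ of (\ref{eq:4}) passes through $v$ and continues to infinity, so the subtree below $v$ is infinite. This is exactly the feature that fails for an arbitrary subtree, and which makes $\mathfrak{T}$ useless here, but which $T_X$ enjoys because it is by construction a union of downward rays. Given this, I would grow the decomposition of each $T_X^{(c)}$ greedily downward: at the root, start one ray down each child; at every other vertex $v$, let the ray arriving from the parent continue into one child with infinite subtree and start a fresh ray down each remaining child. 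Since every non-root vertex has infinite subtree, it has at least one child with infinite subtree, so every ray can always be continued and is therefore genuinely infinite; and each edge $\{v,\text{child}\}$ is used exactly once, either as the continuation through $v$ or as the first edge of a ray born at $v$. This yields an edge-partition of each $T_X^{(c)}$ into rays.

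Finally I would absorb the top edge $((0),(1))$. Since $X\neq\emptyset$, at least one of $(0),(1)$ has a child in $T_X$, hence at least one ray of the above decomposition starts at $(0)$ or at $(1)$; I would simply prepend the edge $((0),(1))$ to one such ray, lengthening it by one vertex while keeping it a simple ray and preserving edge-disjointness. The resulting family $\mathfrak{T}_X$ of rays partitions $E(T_X)$ and hence satisfies all requirements. The only genuinely delicate points are the pruned-tree verification, including the degenerate cases where no point of $X$ begins with $0$ so that $T_X^{(0)}=\{(0)\}$ carries no edges (and symmetrically), and the bookkeeping that the greedy downward growth never strands an edge nor produces a finite path; both become routine once the downward-ray description of $T_X$ is in hand. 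I do not expect a serious obstacle here, only a careful case analysis at the two roots $(0)$ and $(1)$.
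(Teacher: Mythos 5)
Your proof is correct, and it takes a genuinely different route from the paper's. The paper first applies the Cantor--Bendixson theorem to write $Ends(T_X)=B\sqcup U$, takes a family $\mathfrak{T}_B$ for the perfect part by appealing to Example \ref{E:0:1}, and then adjoins one ray $\gamma_n$ per point $u_n\in U$ by truncating $f(u_n)$ at its last vertex in $T_B$; the countable case is handled by running the same recursion with $T^1_X=\{\gamma_1\}$ as base. You instead give a uniform, purely local argument: reduce to an edge-partition into rays (your observations that edge-disjointness forces ``at most one common vertex'' in a tree, and that countability is free, are both right), verify that $T_X$ is pruned because by (\ref{e:0:4}) every non-root vertex lies on a downward ray $f((x_k)_{k\in\mathbb{N}})$, and grow rays greedily downward. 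This is more elementary and arguably tighter than the paper's treatment of the perfect part, where Example \ref{E:0:1} is invoked for $T_B$ even though $T_B$ need not equal $T2^\omega$; your pruned-tree argument covers that case without any such identification. The trade-off is that the paper's construction is engineered so that the family carries extra structure exploited later: Corollary \ref{c:1.3} (for countable $X$ the paths of $\mathfrak{T}_X$ are in bijection with $Ends(T_X)$) and Corollary \ref{c:1.4} (a designated path $\widetilde{\gamma}$ converging to $\partial U$) are deduced ``by the way we constructed the family,'' and the puzzle constructions in \S\ref{Section:PUZZLES} use them. Your greedy family satisfies the lemma but not automatically these corollaries: if at every branching vertex along a descending path the arriving ray continues into the other child, a fresh ray peels off at each step and the corresponding end of $T_X$ is the limit of no ray in your family (already for $X\cong\omega+1$ realised as $\{1^n000\ldots\}\cup\{111\ldots\}$). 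So if your construction were substituted into the paper, you would need to pin down the continuation rule --- e.g.\ always continue the arriving ray along a prescribed $f(x)$ for a chosen $x\in X$ through that vertex --- to recover Corollaries \ref{c:1.3} and \ref{c:1.4}.
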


\begin{proof} 
Let us first suppose that $X$ is uncountable. By theorem \ref{t:1.2},  $Ends(T_{X})=B\sqcup U$, where $B$ is homeomorphic to the Cantor set and $U$ is a countable,  discrete and open. If $U=\emptyset$, then w.l.o.g. we can suppose that $T_X=T2^\omega$ and treat this case as in example \ref{E:0:1}. Let us suppose then that $U= \{u_{n}\}_{n\in\mathbb{N}}$. By Lemma \ref{l:1.1} we have
\[
T_{X}=\left(\bigcup_{x\in B\sqcup U}f(x)\right)\cup ((0),(1))=\left(\bigcup_{x\in B}f(x)\right)\cup\left(\bigcup_{k,n\in\mathbb{N}}f((v_{k}^n))\right)\cup((0),(1)) \subset T2^{\omega}.
\]
where $f((v_k^n)_{k\in\mathbb{N}})=u_n$. From example \ref{E:0:1} we deduce that 
there exist $\mathfrak{T}_B$ a countable family of infinite paths of the subgraph $T_{B}:=\left(\bigcup_{x\in B}f(x)\right)\cup((0),(1))\subset T_X$ such that $T_B=\bigcup_{\gamma\in\mathfrak{T}_B}\gamma$ and any two different infinite paths belong to $\mathfrak{T}_B$ intersect in at most one vertex. We now construct inductively the rest of the desired family of infinite paths. For the infinite path $f((v_k^1)_{k\in\mathbb{N}}):=((v_1^1),(v_1^1,v_2^1),...)=u_1$ in $T_X$ there exist $j(1)\in \mathbb{N}$ such that $(v_1^1,\ldots,v_{j(1)}^{1})\in  T_B$ but for each $i>j(1)$,  $(v_1^1,\ldots,v_{i}^{1})\notin  T_B$. We define $\gamma_{1}:=(v_{j(1)}^{1},v_{j(1)+1}^{1},...)$ and the subgraph
\begin{equation}\label{eq:5}
T^{1}_{X}:=T_{B}\cup \{\gamma_{1}\} \subset T_X.
\end{equation}
Remark that  $\mathfrak{T}_X^1:=\mathfrak{T}_B\cup\{\gamma_1\}$ is a countable family of infinite paths in $T_X^1$, such that $T^{1}_{X}=\bigcup_{\gamma\in \mathfrak{T}_X^1}\gamma$, and any two different paths in $\mathfrak{T}_X^1$ intersect in at most one vertex. Now suppose we have found the desired countable family $\mathfrak{T}_{X}^{n-1}$ of infinite paths
for the subgraph $T_{X}^{n-1}\subset T_X$ and let $f((v_k^n)_{k\in\mathbb{N}}):=((v_1^n),(v_1^n,v_2^n),...)=u_n$ in $T_X$. Then there exist $j(n)\in \mathbb{N}$ such that $(v_1^n,\ldots,v_{j(n)}^{n})\in  T_B$ but for each $i>j(n)$,  $(v_1^n,\ldots,v_{i}^{n})\notin  T_B$. We define $\gamma_{n}:=(v_{j(n)}^{n},v_{j(n)+1}^{n},...)$ and the subgraph
\begin{equation}\label{eq:6}
T^{n}_{X}:=T_{X}^{n-1}\cup \{\gamma_{n} \}\subset T_X.
\end{equation}
The desired countable family of infinite paths is given by 
$\mathfrak{T}_X:=\mathfrak{T}_B \cup\{\gamma_n:n\in\mathbb{N}\}$.
We finish the proof by remarking that if $X$ is countable the desired family of countable infinite paths is obtained from the preceding recursive construction taking $T_X^1=\{\gamma_1\}$ as base case.
\end{proof}

\begin{corolario}\label{c:1.3}
Let $X$ be a countable closed subset of the Cantor set. Then the sets $Ends(T_X)$ and $\mathfrak{T}_X$ are in bijection.
\end{corolario}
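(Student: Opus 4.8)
The plan is to exhibit the natural map $\Phi\colon\mathfrak{T}_X\to Ends(T_X)$ that sends each infinite path $\gamma\in\mathfrak{T}_X$ to the end of $T_X$ it determines, and then to prove that $\Phi$ is a bijection. That $\Phi$ is well defined follows at once from Remark \ref{REM:InfPathsGraph}: every infinite simple path in $T_X\subset T2^{\omega}$ determines a unique end, namely the class $[V_n]$ of the nested sequence of components $V_n$ of $T_X\setminus\{v_n\}$ containing the tail of $\gamma$.

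For surjectivity I would use the explicit description of $\mathfrak{T}_X$ produced in the countable case of the proof of Lemma \ref{l:1.2}. Since $X$ is countable we have $B=\emptyset$, and by Lemma \ref{l:1.1} the space $Ends(T_X)$ is homeomorphic to $X=\{u_n\}_{n\in\mathbb{N}}$, where $u_n$ is the end represented by the infinite path $f((v_k^n)_{k\in\mathbb{N}})$ (Remark \ref{r:0:2}). By construction each $\gamma_n\in\mathfrak{T}_X$ is a tail of this path, i.e. $\gamma_n$ and $f((v_k^n)_{k\in\mathbb{N}})$ differ only in a finite initial segment; hence they determine the same end and $\Phi(\gamma_n)=u_n$. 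As the $u_n$ exhaust $Ends(T_X)$, the map $\Phi$ is onto.

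Injectivity is where the intersection hypothesis of Lemma \ref{l:1.2} does the work. Because $T_X$ is a subgraph of the $3$-regular tree $T2^{\omega}$ it is itself a locally finite tree, so two rays determine the same end if and only if they eventually coincide, that is, share a common infinite tail. If $\gamma,\gamma'\in\mathfrak{T}_X$ satisfied $\Phi(\gamma)=\Phi(\gamma')$ with $\gamma\neq\gamma'$, they would have to meet in infinitely many vertices; but Lemma \ref{l:1.2} guarantees that any two distinct paths of $\mathfrak{T}_X$ intersect in at most one vertex. This contradiction forces $\gamma=\gamma'$, so $\Phi$ is injective.

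The only point deserving care ---and the main obstacle--- is the tree-theoretic characterisation of when two rays define the same end, since the ambient notion of end is phrased via nested open sets rather than via rays. I would justify it by noting that, in a locally finite tree, the complement of any finite subtree has finitely many unbounded components, each the tail of a ray, so the class $[V_n]$ attached to $\gamma$ is governed precisely by the eventual component into which $\gamma$ escapes; two rays escape into the same component for all large $n$ exactly when they share a tail. Granting this, the paragraphs above assemble into the claimed bijection, and the finite case $|X|<\infty$ is handled by the very same argument with a finite index set.
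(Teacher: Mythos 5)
Your proof is correct and follows the same route the paper leaves implicit: the corollary is meant to fall out of the construction in Lemma \ref{l:1.2}, where in the countable case ($B=\emptyset$) each $\gamma_n\in\mathfrak{T}_X$ is by definition a tail of the ray $f((v_k^n)_{k\in\mathbb{N}})$ representing the end $u_n$, giving the correspondence $\gamma_n\mapsto u_n$. Your added justification of injectivity via the ``intersect in at most one vertex'' property and the tree-theoretic characterisation of ends is a valid and welcome filling-in of details the paper omits.
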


By the way we constructed the family $\mathfrak{T}_X$ (see remark \ref{REM:InfPathsGraph}) we obtain the following:

\begin{corolario}\label{c:1.4}
Let $X=B\sqcup U$ be the decomposition of an uncountable subset of the Cantor set given by theorem \ref{t:1.2}. Suppose that the boundary $\partial U\subset B$ is just one point. Then there exist an infinite path $\widetilde{\gamma}\in\mathfrak{T}_X$ such that the end $[V_n]_{n\in\mathbb{N}}$ of $T_X$ defined by $\widetilde{\gamma}$ is precisely  $\partial U$.
\end{corolario}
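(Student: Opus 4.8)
The plan is to exhibit $\widetilde\gamma$ explicitly as the ray determined by $\partial U$ and then to check that it survives as a member of the family $\mathfrak{T}_X$ produced by Lemma \ref{l:1.2}. Write $\partial U=\{p\}$ with $p\in B$, and let $f(p)=((p_1),(p_1,p_2),\ldots)$ be the infinite simple path of prefixes of $p$ furnished by the map of Remark \ref{REM:InfPathsGraph} (using the abuse of notation of Remark \ref{r:0:2}). Since $p\in B$, this ray lies in $T_B=\bigl(\bigcup_{x\in B}f(x)\bigr)\cup((0),(1))$, hence in $T_X$. By Remark \ref{REM:InfPathsGraph} applied to $T_X$, together with the homeomorphism $Ends(T_X)\cong X$ of Lemma \ref{l:1.1}, the end of $T_X$ defined by $f(p)$ is precisely $p=\partial U$. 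Thus $\widetilde\gamma:=f(\partial U)$ is the natural candidate, and the only thing left to prove is that $\widetilde\gamma\in\mathfrak{T}_X$.

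Recall that the family produced in Lemma \ref{l:1.2} has the form $\mathfrak{T}_X=\mathfrak{T}_B\cup\{\gamma_n:n\in\mathbb{N}\}$, where $\mathfrak{T}_B$ is a path decomposition of $T_B$ and each $\gamma_n=(v^n_{j(n)},v^n_{j(n)+1},\ldots)$ is the part of $f(u_n)$ beyond the vertex $v^n_{j(n)}$ at which $f(u_n)$ first leaves $T_B$. In particular every $\gamma_n$ meets $T_B$ only in its initial vertex, so it meets $f(p)\subset T_B$ in at most one vertex and its end is $u_n\neq\partial U$. Hence the tails $\gamma_n$ neither disturb $f(p)$ nor supply a path with end $\partial U$, and the corollary reduces to arranging that $f(p)\in\mathfrak{T}_B$.

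To achieve this I would run the decomposition scheme of Example \ref{E:0:1} on $T_B$ using the ray $f(p)$ as its \emph{spine}: declare $f(p)$ itself to be one of the infinite paths, and cover the rest of $T_B$ by hanging the constant-tail combs of Example \ref{E:0:1} off the successive vertices $(p_1,\ldots,p_n)$ of $f(p)$ and off the finitely many branches sprouting from each of them. Exactly as in Example \ref{E:0:1}, this produces a countable family $\mathfrak{T}_B$ of infinite paths whose union is $T_B$, any two of which intersect in at most one vertex, and which by construction contains $f(p)$. Feeding this choice of $\mathfrak{T}_B$ into Lemma \ref{l:1.2} then yields a family $\mathfrak{T}_X$ containing $\widetilde\gamma=f(\partial U)$, as desired.

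The single-point hypothesis on $\partial U$ is what keeps this clean: it singles out one spine ray $f(\partial U)$, so no two prescribed rays can compete for the same edges of $T_B$. I expect the only real work to be the verification in the third paragraph — namely that using a general (not eventually constant) ray $f(p)$ as a spine still yields a valid Lemma \ref{l:1.2}-type decomposition of $T_B$ (countable, pairwise meeting in at most one vertex, covering $T_B$) — together with checking that appending the tails $\gamma_n$ preserves both these properties and the membership $f(p)\in\mathfrak{T}_X$.
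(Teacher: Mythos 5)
Your proof is correct, and it is worth noting that the paper offers essentially no argument for this corollary: it is stated with only the remark that it follows ``by the way we constructed the family $\mathfrak{T}_X$.'' Your write-up supplies exactly the detail that remark glosses over. In particular, you correctly identify the one genuine issue: the default decomposition of Example \ref{E:0:1} only realises the \emph{eventually constant} binary sequences as ends of its paths, so if $\partial U$ is not of that form, $f(\partial U)$ is not automatically a member of $\mathfrak{T}_B$, and one must either re-run the comb decomposition with $f(\partial U)$ as spine (your choice) or, equivalently, use the homogeneity of the Cantor set to pre-compose the embedding $X\hookrightarrow 2^\omega$ so that $\partial U$ lands on $(1,1,1,\dots)$, whence $f(\partial U)$ is already one of the two special paths of Example \ref{E:0:1}. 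Both fixes are legitimate; the second is slightly cheaper since it requires no new verification, and it is consistent with the ``without loss of generality'' step already present in Lemma \ref{l:1.1}. Your remaining checks --- that each tail $\gamma_n$ meets $T_B$, hence $f(\partial U)$, in at most its initial vertex, and that its end is $u_n\neq\partial U$ because $U$ is open and disjoint from its boundary --- are exactly right, and the spine-based comb decomposition does satisfy the countability, covering, and pairwise-intersection requirements of Lemma \ref{l:1.2}.
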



\subsection{Translation surfaces and the Veech group}
\label{SS:TSVG}
 We recall some general aspects of translation surfaces. For more details we refer the reader to \cite{PSV} and references within.  A translation surface is a real surface $S$ whose transition functions are translations. Every translation surface inherits a natural flat metric from the plane via pull back. We denote by $\widehat{S}$ the \textit{metric completion} of $S$ with respect to its natural flat metric.


\begin{definicion}\cite{PSV}
\label{d:1.5}
A translation surface $S$ is called \emph{tame} if for every point $x\in \widehat{S}$ there exist a neighborhood $U_{x}\subset \widehat{S}$ which is either isometric to some neighborhood of the Euclidean plane or to the neighborhood of the branching point of a cyclic branched covering of the unit disk in the Euclidean plane. In the later case we call $x$ \emph{a cone angle singularity of angle} $2n\pi$ if the cyclic covering is of (finite) order $n\in\mathbb{N}$ and an \emph{infinite cone angle singularity} when the cyclic covering is infinite.  
\end{definicion}

We denote by $\Sing{(S)}\subset\widehat{S}$ the set of cone angle singularities of $S$. A geodesic in $S$ is called \emph{singular} if it has one endpoint in $\Sing{(S)}$ and no singularities in its interior. A singular geodesic having both endpoints in $\Sing{(S)}$ is called a \emph{saddle connection}. To every saddle connection $\gamma$ we can associate two \emph{holonomy vectors} $\{v,-v\}\subset\mathbb{R}^2$ by developing the translation surface structure along $\gamma$. Analogously, one can associate two unit vectors to every singular geodesic, which we will also call holonomy vectors. Two saddle connections or singular geodesics are said to be \emph{parallel} if their corresponding holonomy vectors are parallel.


An \emph{affine diffeomophism} is a map $f:S\to S$ which is affine on charts. We denote by 
$\Aff_+{(S)}$ the group of all orientation-preserving affine diffeomorphisms. Given that $S$ is a translation surface, the differential of every element in $\Aff_+{(S)}$ is constant. Hence we have a well defined group morphism:
$$
D: \Aff_{+}(S)\to {\rm GL_{+}(2,\mathbb{R})},
$$
where $D(f)$ is the differential matrix of $f$. The image of $D$, that we denote by $\Gamma(S):=D(\Aff_{+}(S))$, is called \emph{the Veech group} of $S$, see \cite{Vee}.

We finish this section by recalling that $ {\rm GL_{+}(2,\mathbb{R})}$ acts on the set of all translation surfaces by post composition on charts. For every $g\in  {\rm GL_{+}(2,\mathbb{R})}$, we denote by $S_g:= g\cdot S$ and $\overline{g}:S_{Id}\to S_{g}$ the corresponding affine diffeomorphism.


\subsection{Auxiliary Constructions} In the following paragraphs we introduce some elementary constructions needed to prove our main results. All these constructions are based on the same principle: \emph{to glue translation surfaces along parallel marks}. 
A \textit{mark} $m$ on a translation surface $S$ is finite length geodesic having no singular points in its interior. As with saddle connections, we can associate to each mark 
two holonomy vectors by developing the translation structure along them. Two marks on $S$ are parallel if their respective holonomy vectors are parallel.
\begin{definicion}[Gluing marks]
\label{d:1.7}
Let $m$ and $m^{'}$ be two disjoint parallel marks on a translation surface $S$. We cut $S$ along $m$ and $m'$, which turns $S$ into a surface with boundary consisting of four straight segments. We glue this segments back using translations to obtain a tame translation surface $S'$ \emph{different} from the one we started from. We say that $S'$ is obtained from $S$ by \emph{regluing} along $m$ and $m'$.

\begin{figure}[h!]
 \centering
 \includegraphics[scale=0.6]{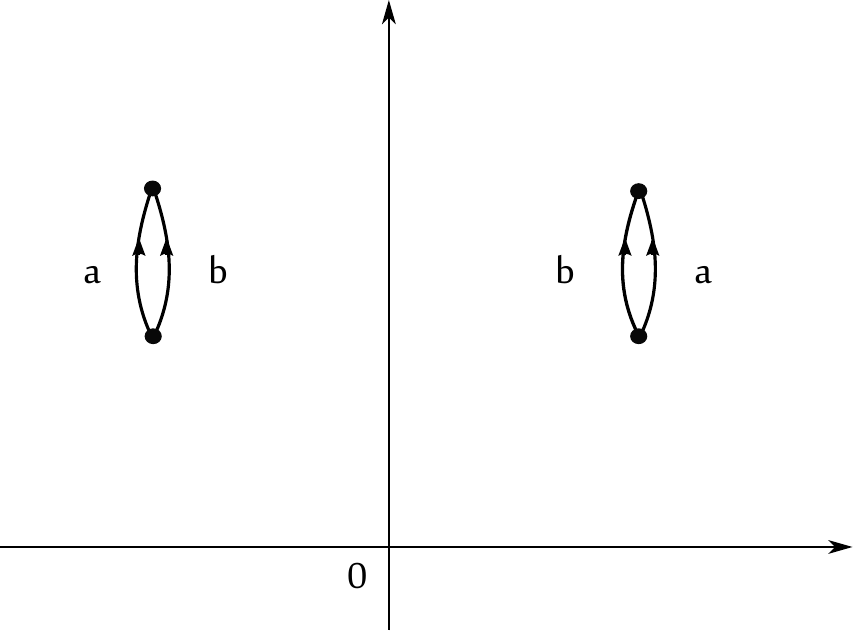}\\
 \caption{\textit{Gluing marks.}}
 \label{Figure5}
 \end{figure}
\end{definicion}

We denote by $m\sim_{glue}m^{'}$ the operation of glueing the marks $m$ and $m'$ and $S'=S / m\sim_{glue}m^{'}$. In Figure \ref{Figure5} we depict the glueing of two marks on the plane. Remark that the operation of glueing marks can also be performed for marks on different surfaces. In any case, $\Sing{S'}\setminus\Sing{S}$ is formed by two $4\pi$ cone angle singularities, that is, $S$ tame implies $S'$ tame.


\begin{lema}\label{l:1.3}
Let $S_{1}$ and $S_2$ be two translation surfaces homeomorphic to the Loch Ness Monster and $M^j:=\{m_i^j:\forall i\in\mathbb{N}\}$ a discrete\footnote{By discrete we mean that $M^j$, as a set of marks, does not accumulate in the metric completion of the surface.} family of marks on $S_j$, $j=1,2$ such that $m^1_i$ and $m^2_i$ are parallel, for every $i\in \mathbb{N}$. Then 
\[
S:=\left(\bigcup\limits_{j\in\{1,2\}}S_j \right)\Big/ m_i^1\sim_{\text{glue}}m^2_i, \text{ for every } i\in\mathbb{N},
\]
is a tame translation surface homeomorphic to the Loch Ness Monster (see Figure \ref{Figure6}).
\begin{figure}[h]
 \centering
 \includegraphics[scale=0.8]{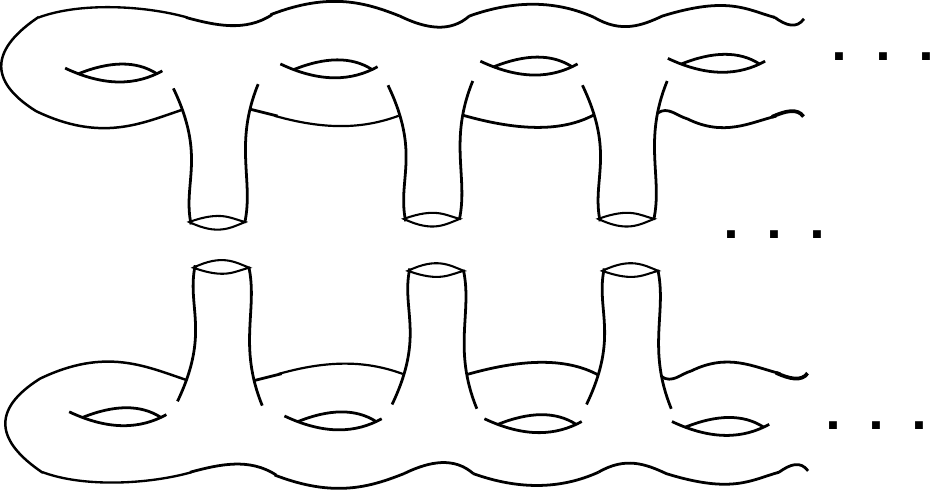}
 \caption{\emph{Gluing marks on the Loch Ness Monsters.}}
 \label{Figure6}
\end{figure}
\end{lema}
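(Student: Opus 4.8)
The plan is to verify the three defining properties of the Loch Ness Monster for $S$ --- that it is a tame translation surface, that it has infinite genus, and that it has a single (necessarily non-planar) end --- and then invoke the classification of orientable surfaces (Theorem \ref{t:1.1}) to conclude. First I would check that $S$ is a well-defined tame translation surface. Performing the countably many regluings $m_i^1\sim_{\text{glue}}m_i^2$ simultaneously is legitimate precisely because the families $M^1,M^2$ are discrete: every compact subset of $\widehat{S}$ meets only finitely many of the marks, so each regluing is a local modification and no singularities accumulate. By the remark following Definition \ref{d:1.7}, each regluing preserves tameness and contributes exactly two $4\pi$ cone angle singularities; hence $\Sing(S)$ is the union of $\Sing(S_1)$, $\Sing(S_2)$ and a discrete set of $4\pi$ cone points, and $S$ is tame. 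Moreover the single regluing $m_1^1\sim_{\text{glue}}m_1^2$ already joins $S_1$ and $S_2$, so $S$ is connected.

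Next, infinite genus. The regluing operation of Definition \ref{d:1.7} never decreases genus: cutting two parallel slits and regluing them attaches a handle, and in particular the genus cannot drop. Since $S_1$ is a Loch Ness Monster, it contains compact subsurfaces of arbitrarily large genus; each such subsurface meets only finitely many marks, so after performing the finitely many corresponding regluings (enlarging it by a neighbourhood in $S_2$ of the paired marks) it persists as a compact subsurface of $S$ of at least the same genus. Therefore $S$ has infinite genus. Note that once $S$ is known to have a single end, that end is automatically non-planar --- a planar neighbourhood of the unique end would confine all the genus to a compact complement, contradicting infinite genus --- so that $Ends_{\infty}(S)=Ends(S)$ is a single point, exactly as for the Loch Ness Monster.

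The heart of the argument is showing that $S$ has exactly one end, for which I would use the characterisation recalled after Theorem \ref{t:1.1} (see \cite{Spec}): $S$ is one-ended if and only if every compact $K\subset S$ is contained in a compact $K'$ with $S\setminus K'$ connected. Given such a $K$, its preimage in $S_1\sqcup S_2$ lies in a union $K_1\sqcup K_2$ of compact sets, and by discreteness only finitely many marks meet $K_1\cup K_2$. Using that each $S_j$ has one end, I would enlarge $K_j$ to a compact $L_j$ with $S_j\setminus L_j$ connected and with every mark $m_i^j$ either contained in $L_j$ or disjoint from it, synchronised so that $m_i^1\subset L_1 \iff m_i^2\subset L_2$; call $I_0$ this finite index set. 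Taking $K'$ to be the image in $S$ of $L_1\cup L_2$ with the regluings for $i\in I_0$ performed internally, the complement $S\setminus K'$ is obtained from the two connected pieces $S_1\setminus L_1$ and $S_2\setminus L_2$ by regluing along the marks indexed by $i\notin I_0$. Since $I_0$ is finite while the index set $\mathbb{N}$ is infinite, infinitely many such marks remain, and each of them joins $S_1\setminus L_1$ to $S_2\setminus L_2$; hence $S\setminus K'$ is connected.

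I expect the main obstacle to be this last step --- concretely, arranging the compact sets $L_j$ so that the finitely many relevant marks lie cleanly inside and all others lie cleanly outside, which amounts to a small isotopy of $\partial L_j$ off the discrete family of arcs while repeatedly applying one-endedness of $S_j$ to keep the complement connected. The conceptual point that forces the conclusion ``one end'' rather than several is that the family of marks is \emph{infinite}: no compact set can enclose all of them, so the two leftover halves always remain tied together by infinitely many regluing tubes (had only finitely many marks been glued, $S$ would instead have at least two ends). With these three properties established, Theorem \ref{t:1.1} identifies $S$ with the Loch Ness Monster.
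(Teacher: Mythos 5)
Your proposal is correct and follows essentially the same route as the paper: both verify tameness from the non-accumulation of the marks, establish the single end via the compact-exhaustion characterisation (enlarging compacta inside each $S_j$ using one-endedness of the factors and observing that the infinitely many remaining regluings keep the complement connected), and note that genus survives because cutting along marks does not destroy it. Your write-up is in fact slightly more explicit than the paper's at the final step, where the paper simply asserts ``by construction, $K\subset K'$ and $S\setminus K'$ is connected.''
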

\begin{proof}
Given that family of marks $M^j$ does not accumulate in $\widehat{S}_j$ the quotient $S$ is a tame translation surface. Let $K$ be a compact subset of $S$. We will show that there exists a compact subset $K\subset K'\subset S$ such that $S\setminus K'$ is connected. By the remark done after theorem \ref{t:1.1}, this implies that $S$ has only one end. Let $f:S_1\to S_2$ be a homeomorphism such that $f(m^1_{i})=m^2_i$, for each $i\in\mathbb{N}$ and define the projections
 \[
\begin{array}{ccccc}
\pi_1 & : &  \bigcup\limits_{j\in\{1,2\}}(S_j\setminus M^j) & \to & (S_1\setminus M^{1})\\
         &   &             x                                                             & \to & \begin{cases}
                                                                                                         x, & \text{if $x\in S_1$,} \\
                                                                                                         f^{-1}(x), & \text{if $x\in S_2$}.
                                                                                                          \end{cases}
\end{array}
\]
\[
\begin{array}{ccccc}
\pi_2 & : &  \bigcup\limits_{j\in\{1,2\}}(S_j\setminus M^j) & \to & (S_2\setminus M^{2})\\
         &   &             x                                                             & \to & \begin{cases}
                                                                                                         x, & \text{if $x\in S_2$,} \\
                                                                                                         f(x), & \text{if $x\in S_1$}.
                                                                                                          \end{cases}
\end{array}
\]
We denote by $\pi:S_1\cup S_2\to S$ the standard projection. For each $j\in\{1,2\}$, the closure of $\pi_j\left(\pi^{-1}(K)\setminus (M^1\cup M^2)\right)$ in $S_j$ is compact. We denote it by $K_j$. Since $S_j$ has only one end, there exist a compact subset $K^{'}_j\subset S_j$ such that $K_j\subset K^{'}_j$ and $S_j\setminus K^{'}_j$ is connected. Define $K'$ as the closure of
\[
\pi\left(\bigcup\limits_{j\in\{1,2\}} \pi^{-1}_{j}\left(K^{'}_j\cap (S_j\setminus M^j)\right)\right)
\]
in $S$. By construction, $K\subset K^{'}$ and $S\setminus K^{'}$ is connected. To conclude the proof remark that, since cutting along marks does not destroy genus, for each $j\in\{1,2\}$ the open surface $S_j\setminus M^j$ with boundary of $S_j$ has infinite genus and is naturally embedded in $S$. 
\end{proof}

\begin{construccion}[\emph{Elementary piece}]
\label{cons:1.1}
Let $X$ be a closed subset of the Cantor set, $T_X$ the subgraph of $T2^{\omega}$ given by Lemma \ref{l:1.1} and $\mathfrak{T}_X$ a countable family of infinite paths as in Lemma \ref{l:1.2}. Suppose that for each path $\gamma\in \mathfrak{T}_X$ we have a tame translation surface $S(\gamma)$ homeomorphic to the Loch Ness Monster and a countable family of marks $M:=\{m_k: k \in\mathbb{N}\}\subset S(\gamma)$ which do not accumulate on the metric completion $\widehat{S(\gamma)}$. Suppose that the vertex set of $\gamma$ is labeled by $(v_k=(x_1,\ldots,x_k))$ as in (\ref{eq:4}). For each $k\in\mathbb{N}$ we label the $k$-th mark $m_k$ of the family $M$ with $v_k\in\gamma$ and define the \emph{elementary piece associated to $X$ and the family of translation surfaces $\{S(\gamma)\}_{\gamma\in\mathfrak{T}_X}$} as : 
\begin{equation}\label{eq:8}
S_{elem}:= \bigcup\limits_{\gamma\in\mathfrak{T}_X}S(\gamma) \Big/ \sim
\end{equation}
where $\sim$ is the equivalent relation given by glueing marks with the same labels.
\end{construccion}

\begin{lema}\label{l:1.4}
For every closed subset $X$ of the Cantor set, $S_{elem}=S_{elem}(X)$ is a tame translation surface with $Ends(S_{elem})=Ends_\infty(S_{elem})$  homeomorphic to $X$.
\end{lema}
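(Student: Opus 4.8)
The plan is to establish the three assertions—tameness, $Ends_\infty(S_{elem})=Ends(S_{elem})$, and $Ends(S_{elem})\cong X$—separately, with the homeomorphism of end spaces carrying the main content. The structural fact driving every step is that, by construction, two surfaces $S(\gamma)$ and $S(\gamma')$ are glued precisely when the paths $\gamma,\gamma'\in\mathfrak{T}_X$ share a vertex, and then only along a single pair of marks, since by Lemma \ref{l:1.2} two paths meet in at most one vertex. Thus the combinatorics of the gluing faithfully mirror the incidence pattern of the decomposition $\mathfrak{T}_X$ inside the tree $T_X$, and $S_{elem}$ is connected because $T_X$ is.

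For tameness I would argue local finiteness of the gluing. Each family of marks $M\subset S(\gamma)$ is discrete in $\widehat{S(\gamma)}$, so only finitely many marks of a given $S(\gamma)$ meet any bounded metric ball; combined with the tree-like, single-mark incidence pattern this shows that only finitely many gluings occur inside any compact region of $S_{elem}$. Hence every point of $\widehat{S_{elem}}$ has a neighbourhood isometric either to one inherited from a single $S(\gamma)$, which is tame by hypothesis, or to a neighbourhood of one of the $4\pi$ cone points produced by the regluing of Definition \ref{d:1.7}; in either case it is of the required type, so $S_{elem}$ is tame.

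The core step is to compute the ends by matching compatible exhaustions. Fix a base vertex and let $L_n\subset T_X$ be the finite subtree spanned by the vertices at distance $\le n$, so that $\{L_n\}$ exhausts $T_X$ and $Ends(T_X)=\varprojlim \pi_0(T_X\setminus L_n)$. For each $\gamma\in\mathfrak T_X$ I choose a compact exhaustion $C(\gamma,1)\subset C(\gamma,2)\subset\cdots$ of the one-ended surface $S(\gamma)$ in which $C(\gamma,N)$ contains exactly the marks $m_1,\dots,m_N$ and has connected complement; this is possible since $S(\gamma)$ has a single end and $M$ is discrete. As the marks are indexed along $\gamma$ outward from the base, the vertices of $\gamma$ lying in $L_n$ form an initial segment, and I set $K_n:=\bigl(\bigcup_{\gamma}C(\gamma,N_\gamma)\bigr)/\!\sim$, where $N_\gamma$ is chosen so that the retained marks are exactly those at vertices of $\gamma$ in $L_n$ (only the finitely many $\gamma$ meeting $L_n$ contribute). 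Each $K_n$ is compact and connected and $\{K_n\}$ exhausts $S_{elem}$. The key claim is then that the rule sending a component $T_v$ of $T_X\setminus L_n$ to the union of all surface pieces lying over the vertices of $T_v$ is a bijection $\pi_0(T_X\setminus L_n)\to\pi_0(S_{elem}\setminus K_n)$ compatible with the bonding maps: the over-$T_v$ region is connected because $T_v$ is a connected subtree and the gluing mirrors it, while regions over distinct subtrees are disjoint in $S_{elem}\setminus K_n$ because any chain of surfaces joining them would pass through a mark sitting at a vertex of $L_n$, hence inside $K_n$. Passing to inverse limits yields $Ends(S_{elem})\cong Ends(T_X)\cong X$ by Lemma \ref{l:1.1}.

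Finally, every end carries genus: an arbitrary end is represented by a nested sequence of over-$T_v$ regions with $T_v$ infinite, and any infinite subtree $T_v$ contains an infinite tail of some $\gamma\in\mathfrak T_X$; the corresponding tail of the Loch Ness Monster $S(\gamma)$ is $S(\gamma)$ minus a compact set and therefore still has infinite genus, since cutting along marks does not destroy genus. Hence no neighbourhood of the end is planar, and $Ends_\infty(S_{elem})=Ends(S_{elem})$. I expect the main obstacle to be the verification of the bijection $\pi_0(T_X\setminus L_n)\cong\pi_0(S_{elem}\setminus K_n)$—in particular choosing the pieces $C(\gamma,N_\gamma)$ coherently and checking that removing $K_n$ neither merges regions over different subtrees nor creates spurious components—whereas tameness and the genus of the ends are comparatively routine once this incidence dictionary between $\mathfrak T_X$ and the gluing is in place.
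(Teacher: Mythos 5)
Your proposal is correct in outline, but it reaches the conclusion by a genuinely different route than the paper. The paper constructs an explicit embedding $i\colon T_X\hookrightarrow S_{elem}$ (threading a simple arc through the marks of each $S(\gamma)$), parametrises $Ends(T_X)$ by descending paths to sidestep the fact that $\mathfrak{T}_X$ is countable while $Ends(T_X)$ may be uncountable, and then verifies by hand that the induced map $i_*$ on ends is a bijection, continuous at each end, and closed. You instead invoke the Freudenthal description $Ends=\varprojlim \pi_0(\,\cdot\setminus K_n)$ and build matched compact exhaustions $L_n\subset T_X$ and $K_n\subset S_{elem}$, so that a level-wise bijection of finite inverse systems does all the topological work automatically; this buys you the homeomorphism (not just the bijection) of end spaces for free and avoids the descending-path detour, at the cost of the coherence bookkeeping in choosing the $C(\gamma,N_\gamma)$, which you correctly flag as the crux. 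One point of your write-up is stated too loosely: a piece $S(\gamma)$ does not "lie over" a single component $T_v$ of $T_X\setminus L_n$, since $\gamma$ typically meets both $L_n$ and several of its complementary components' closures; the object you actually want to assign to $T_v$ is the union of the truncated tails $S(\gamma)\setminus C(\gamma,N_\gamma)$ over those $\gamma$ whose tail beyond $L_n$ lies in $T_v$ (well defined precisely because $\gamma\cap L_n$ is an initial segment of $\gamma$, as you observe), together with any $S(\gamma)$ entirely over $T_v$. With that correction your bijection $\pi_0(T_X\setminus L_n)\to\pi_0(S_{elem}\setminus K_n)$, its compatibility with the bonding maps, and your genus argument (every component of $T_X\setminus L_n$ contains the full infinite tail of some $\gamma$, hence the corresponding region contains a Loch Ness Monster minus a compact set) all go through, and your tameness argument is at the same level of detail as the paper's.
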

\begin{proof} 
Tameness follows from two facts: any two infinite paths in $\mathfrak{T}_X$ intersect in at most one vertex and marks never accumulate on the metric completion $\widehat{S(\gamma)}$. The rest of the proof goes as follows. First we prove that $Ends(S_{elem})$ is homeomorphic to $X$. The idea is to define an embedding $i:T_X\hookrightarrow S_{elem}$ and show that it induces a homeomorphisms between $Ends(T_X)$ and $Ends(S_{elem})$. Finally, we prove that $S_{elem}$ has no planar ends.

\emph{The embedding.} Let $[U(\gamma)_n]_{n\in\mathbb{N}}$ be the end of $S(\gamma)$. Up to making an isotopy on $S(\gamma)$, for every $k\in\mathbb{N}$ we can suppose that the marks $\{v_1,\ldots,v_k\}\subset S(\gamma)$ are contained in $S(\gamma)\setminus U(\gamma)_k$. It is then easy to see that there exists an infinite \emph{simple} embedded path $\delta:[0,\infty)\to S(\gamma)$ such that $\delta(0)$ is an extremity of the mark labeled with the vertex $v_1$, all marks $M$ are contained in $\delta([0,\infty])$ and $\delta^{-1}(U(\gamma)_k)=(n,\infty)$. In other words, the image of $\delta$ goes only once over each mark\footnote{Except for the mark labeled with the first vertex $v_1$.} and runs into the only end of the surface. On the vertex set $V(\gamma)=(v_k)_{k\in\mathbb{N}}$, we define $i_{\gamma}(v_k)$ as the only endpoint in the mark labeled with $v_k$ satisfying the following property: if $\overline{i_{\gamma}(v_k)i_{\gamma}(v_{k+1})}$ is the segment in the image of $\delta$ going from $i_{\gamma}(v_k)$ to $i_{\gamma}(v_{k+1})$ and $p_{k+1}$ is the endpoint on the mark $v_{k+1}$ different from $i_{\gamma}(v_{k+1})$, then $p_{k+1}\in\overline{i_{\gamma}(v_k)i_{\gamma}(v_{k+1})}$. We extend the map $i_{\gamma}$ to all edges by declaring  the image of the edge $(v_k,v_{k+1})$ to be 
$\overline{i_{\gamma}(v_k)i_{\gamma}(v_{k+1})}$. This defines an embedding:
\[
i_{\gamma}:\gamma\hookrightarrow S(\gamma),
\]
by declaring that the image of the edge $(v_k,v_{k+1})$ is precisely the subarc of $\delta$ joining $i_{\gamma}(v_k)$ to $i_\gamma(v_{k+1})$. Now consider two infinite paths $\gamma,\gamma'\in\mathfrak{T}_X$ sharing a vertex $v_k$. This means that $S(\gamma)$ and $S(\gamma')$ are glued along the mark labeled with $v_k$ when constructing $S_{elem}$. By the way we defined $i_\gamma$ and $i_{\gamma'}$ we have that their images in $S_{elem}$ only intersect in $i_\gamma(v_k)=i_{\gamma'}(v_k)$. Therefore we can glue the family of maps $\{i_\gamma\}_{\gamma\in\mathfrak{T}_X}$ to define an embedding:
\begin{equation}\label{eq:9}
i:T_{X}\hookrightarrow S_{elem},
\end{equation}
Remark that in general we cannot use the set of paths in $\mathfrak{T}_X$ to characterise \emph{all} points in $Ends(T_X)$ because $\mathfrak{T}_X$ is always countable but $Ends(\mathfrak{T}_X)$'s cardinality is the same as $X$'s, which may be uncountable. To overcome this difficulty, let us define the family $\mathfrak{D}_X$ of \emph{descending paths} of $T_X$ as the set of all infinite paths $(D_{n})_{n\in \mathbb{N}}$ of $T_X$ such that $D_n\in 2^n$ (see \S\ref{SS:CantorBinaryTree}). Every descendent path $(D_n)_{n\in \mathbb{N}}$ defines an element $[\mathcal{U}_n]_{n\in \mathbb{N}}\in Ends(T_X)$ as follows: for every vertex $D_n$ let $\mathcal{U}_n\subset T_X$ be the connected componen of $T_X\setminus D_n$ containing $D_{n+1}$. This defines a bijection between $Ends(T_X)$ and $\mathfrak{D}_X$. We use the family of descending paths of $T_X$ to define a homeomorphism between $Ends(T_X)$ and $Ends(S_{elem})$.

Let $(D_n)_{n\in \mathbb{N}}\in\mathfrak{D}_X$ be the descending path determining $[\mathcal{U}_n]_{n\in \mathbb{N}}\in Ends(T_X)$. For each vertex $D_n$ we consider two cases:
	\begin{enumerate}
	\item Suppose that there exist two paths $\gamma_\alpha$, $\gamma_\beta\in\mathfrak{T}_X$ such that $D_n\in\gamma_\beta\cap\gamma_\alpha$. Let $k_0,k_0'\in\mathbb{N}$ be the smallest positive integers such that $i_{\gamma_\alpha}(D_n)\not\in U(\gamma_\alpha)_{k_0}\in[U(\gamma_\alpha)_k]_{k\in\mathbb{N}}$ and $i_{\gamma_\beta}(D_n)\not\in U(\gamma_\beta)_{k'_0}\in[U(\gamma_\beta)_k]_{k\in\mathbb{N}}$. We define then $W_n$ as the connected component of $S_{elem}\setminus \partial U(\gamma_\alpha)_{k_0} \cup \partial U(\gamma_\beta)_{k_0'} $ containing $i(D_{n+1})$.
\item Suppose $D_n\in\gamma_\alpha\in\mathfrak{T}_X$ but is not contained in any other infinite path of $\mathfrak{T}_X$. Let $k_0$ be the smallest positive integer such that 
$i_{\gamma_\alpha}(D_n)\not\in U(\gamma_\alpha)_{k_0}\in[U(\gamma_\alpha)_k]_{k\in\mathbb{N}}$. We define then $W_n$ as the connected component of $S_{elem}\setminus \partial U(\gamma_\alpha)_{k_0} $ containing $i(D_{n+1})$.
\end{enumerate}
It is easy to check that the map 
$$
i_*:Ends(T_X)\to Ends(S_{elem})
$$
given by $i_*([\mathcal{U}_n]_{n\in \mathbb{N}}):=[W_n]_{n\in \mathbb{N}}$ is well defined. We now prove that it is a closed continuous bijection, hence a homeomorphism.

\emph{Injectivity}. Consider two different infinite paths $(D_n)_{n\in \mathbb{N}}$ and $(D_n')_{n\in \mathbb{N}}$ in $\mathfrak{D}_X$ defining $[\mathcal{U}_n]_{n\in \mathbb{N}}$ and $[\mathcal{U}'_n]_{n\in \mathbb{N}}$ ends of $T_X$. Then there exists $N\in\mathbb{N}$ such that for all $m>N$ we have that $D_m\neq D'_m$. This implies that $W_m\cap W'_m=\emptyset$, hence $i_*([\mathcal{U}_n]_{n\in \mathbb{N}})=[W_n]_{n\in \mathbb{N}}\neq [W_n']_{n\in \mathbb{N}}=i_*([\mathcal{U'}_n]_{n\in \mathbb{N}})$.

\emph{Surjectivity}. Consider an end $[W_{n}]_{n\in \mathbb{N}}$ of $S_{elem}$. By the way the embedding (\ref{eq:9}) was defined,  $[i^{-1}(W_{n}\cap i(T_X))]$ defines an end in $T_X$. Let $(D_n)_{n\in \mathbb{N}}\in\mathfrak{D}_X$ be the descending path defining $[i^{-1}(W_{n}\cap i(T_X))]$ and $[\mathcal{U}_n]_{n\in \mathbb{N}}=[i^{-1}(W_{n}\cap i(T_X))]$. Then $i_*([\mathcal{U}_n]_{n\in \mathbb{N}})=[W_{n}]_{n\in\mathbb{N}}$.

\emph{The map $i_{\ast}$ is continuous.} Let $W\subset S_{elem}$ be an open set with compact boundary and $W^*\subset Ends(S_{elem})$ the basic open set it defines in the space of ends. Since (\ref{eq:9}) is an embbeding, we have that $\mathcal{U}:=i^{-1}(W\cap T_X)$ is an open set  with compact boundary and $i_*(\mathcal{U}^*)\subset W^*$. 

The map $i_*$ is closed for it is a bijection from a compact Hausdorff space into a Hausdorff space. Finally we remark that $S_{elem}$ has no planar ends for each piece $S(\gamma)$ used in its construction has infinite genus. 
\end{proof}

\section{Proof Theorem \ref{T:PP}}
	\label{SEC:UNCOUNTABLEVEECH}

The proof that we present relies on construction \ref{cons:1.1} and lemma \ref{l:1.4}. First we define a tame translation surface $S_P$ homeomorphic to the Loch Ness Monster whose Veech group is exactly $P$. This surface comes with an infinite family of marks which do not accumulate on the boundary, hence we are in shape to perform construction \ref{cons:1.1} taking all $S(\gamma)$ to be equal to $S_P$ and $X$ an arbitrary closed subset of the Cantor set. We then check that the Veech group of the resulting elementary piece $S_{elem}$ is $P$. The case for $P'$ is treated analogously.

\begin{construccion}
\label{cons:2.1}
Consider $\mathbb{E}$ a copy of the Euclidean plane equipped with a fixed origin $\overline{0}$ and an orthogonal basis $\beta= \{e_{1},e_{2}\}$. On $\mathbb{E}$ we 
define\footnote{Marks are given by their ends points.} two infinite families of marks:
\[
L:=  \{l_{i} =((4i-1)e_{1}, \, 4ie_{1}) : \forall i \in \mathbb{N}\} \, \,\text{ and, }\, \,
M:= \{m_{i} =((4i-3)e_{1}, \, (4i-2)e_{1}) : \forall i \in \mathbb{N} \}.
\]
and the tame Loch Ness Monster (see Figure \ref{Figure8}):
\begin{equation}\label{eq:11}
S_P:=\mathbb{E}\big/ l_{2i-1}\sim_{\text{glue}} l_{2i},  \text{ for every } i \in \mathbb{N}
\end{equation}
\begin{figure}[h!]
  \centering
  \includegraphics[scale=0.9]{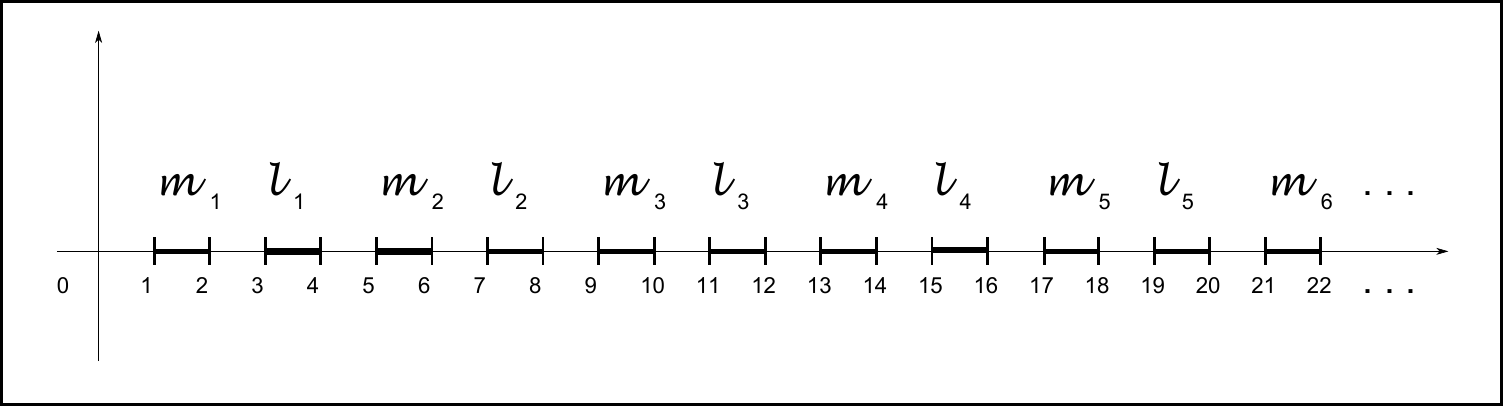}
  \caption{\emph{Tame Loch Ness Monster $S_P$.}}
  \label{Figure8}
\end{figure}
\end{construccion}

Given that $\{\pm e_1\}$ is the set of all holonomy vectors of $S_P$ and there are no saddle connections on the half plane $x<0$, the Veech group of $S_P$ is $P$. We remark that $S_P$ comes with the infinite family of marks $M=\{m_k\}_{k\in\mathbb{N}}$, none of which has been glued to another mark.  

Let $X$ be a closed subset of the Cantor set and 
\begin{equation}\label{eq:12}
S_{elem}:= \bigcup\limits_{\gamma\in\mathfrak{T}_X}S(\gamma) \Big/ \sim,
\end{equation}
be the tame translation surface obtained by performing construction \ref{cons:1.1} with initial data $X$ and all $S(\gamma)$ equal to $S_P$. By lemma \ref{l:1.4}, $S_{elem}$ has no planar ends and its ends space is homeomorphic to $X$. Since all marks $M$ in $S_P$ are parallel, all saddle connections in $S_{elem}$ are parallel as well. Hence we can define for each $g\in P$ an affine diffeomorphism $f$ on $S_P$  whose differential is exactly $g$ and which \emph{fixes} all points in the marks belonging to $L$ and $M$. Since $S_{elem}$ is obtained by glueing copies of $S_P$ along saddle connections, these affine diffeomorphisms can be glued together to define a global affine diffeomorphism $f\in \Aff_+{(S_{elem})}$ whose differential is precisely $g$. In other words $P$ is a subgroup of $\Gamma(S_{elem})$. 
To see that $\Gamma(S_{elem})$ is a subgroup of $P$ it is sufficient to remark that
there is only one horizontal direction defining infinite singular geodesics in $S_{elem}$ and this direction has to be fixed by the (linear) action of $\Gamma(S_{elem})$ on the plane.
The construction of a tame translation surface having ends space homeomorphic to $X$ and Veech group equal to $P'$ done exactly as we did for $P$, except that instead of taking all copies of $S(\gamma)$ equal to $S_P$ we take them equal to the surface $S_{P'}$ defined in the following paragraph. 
\begin{construccion}
\label{cons:2.2}
Consider $\mathbb{E}$ a copy of the Euclidean plane equipped with a fixed origin $\overline{0}$ and an orthogonal basis $\beta= \{e_{1},e_{2}\}$. On $\mathbb{E}$ we 
define two infinite families of marks:
\[
\begin{array}{lcl}
M^{+} := \{ m^{+}_{i} = ((4i-3)e_{1}, \, (4i-2)e_{1}) : \forall i \in \mathbb{N}\}, & &L^{+} :=  \{ l^{+}_{i}  =   ((4i-1)e_{1}, \, 4ie_{1}) : \forall i \in \mathbb{N}\},\\
 M^{-}:= \{ m^{-}_{i}   = ((3-4i)e_{1}, \, (2-4i)e_{1}) : \forall i \in \mathbb{N}\} &\text{and}& L^{-}:=\{ l^{-}_{i} = ((1-4i)e_{1}, \, -4ie_{1}): \forall i \in \mathbb{N}\}.
\end{array}
\]
and the tame Loch Ness Monster (see Figure \ref{Figure10}).
\begin{equation}\label{eq:13}
S_{P^{'}}:=\mathbb{E}\big/l^{+}_{2i-1}\sim_{\text{glue}} l^{+}_{2i} \text{ and } l^{-}_{2i-1}\sim_{\text{glue}} l^{-}_{2i}, \text{  for each }i \in \mathbb{N}
\end{equation}
\qed
\begin{figure}[h!]
   \centering
   \includegraphics[scale=0.8]{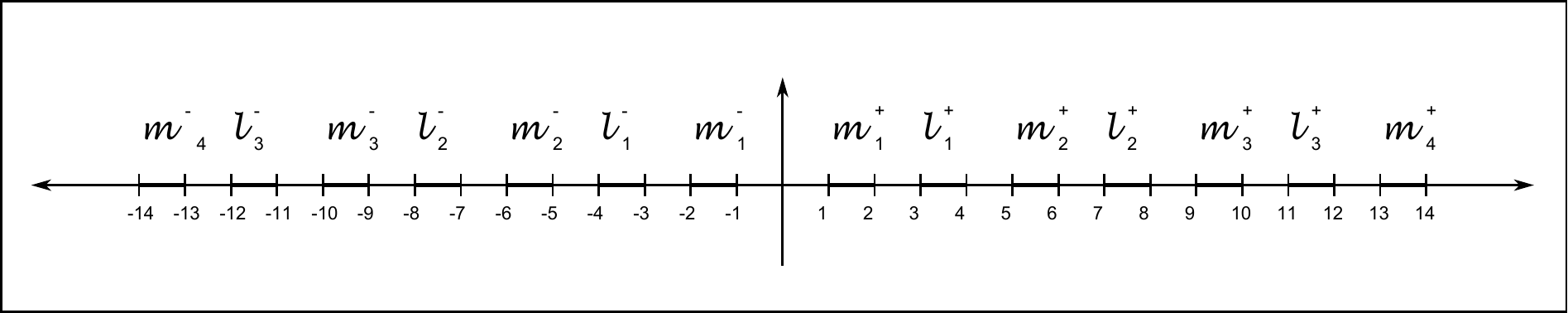}
   \caption{\emph{Tame Loch Ness Monster $S_{P^{'}}$.}}
   \label{Figure10}
\end{figure}
\end{construccion}

\begin{corolario}
\label{c:2.1}
\begin{upshape}
There exist a tame translation surface $S$ of genus zero with Veech group $P$ such that its space of ends is homeomorphic to $X$. \end{upshape}
\end{corolario}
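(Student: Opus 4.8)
The plan is to re-run Construction \ref{cons:1.1} verbatim, but to replace the infinite-genus Loch Ness Monster building blocks by copies of the Euclidean plane, so that the resulting elementary piece is a genus-zero surface with the same end space and the same Veech group. Concretely, for each path $\gamma\in\mathfrak{T}_X$ I take a copy $\mathbb{E}(\gamma)$ of the Euclidean plane carrying the marks $M=\{m_k=((4k-3)e_1,(4k-2)e_1):k\in\mathbb{N}\}$ of Construction \ref{cons:2.1}, with $m_k$ labelled by the $k$-th vertex of $\gamma$; crucially I perform \emph{no} internal regluing, so each block stays flat and of genus zero. Setting $S:=\bigcup_{\gamma\in\mathfrak{T}_X}\mathbb{E}(\gamma)/\!\sim$ as in (\ref{eq:8}), with $\sim$ gluing marks carrying the same label, I first note that the proof of Lemma \ref{l:1.4} uses only that each building block has a \emph{single} end; as the plane is one-ended, the same argument yields $Ends(S)\cong X$. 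For the genus, Lemma \ref{l:1.2} ensures that two paths meet in at most one vertex, so each identification in $\sim$ is a single-mark gluing joining two \emph{distinct} components of the partially built surface; by Definition \ref{d:1.7} such a gluing produces two $4\pi$ cone points but no handle. An induction over the tree $T_X$ then gives that $S$ has genus zero, equivalently $Ends_\infty(S)=\emptyset$.

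The heart of the matter is to show $\Gamma(S)=P$. Since every gluing is a horizontal translation, the second coordinate $y$ is a well-defined global function on $S$; all cone points therefore lie on $\{y=0\}$, and consequently every saddle connection has holonomy in $\R e_1$. Hence $\Gamma(S)$ fixes the horizontal direction and is upper-triangular. The glued marks are themselves saddle connections of length $1$, and—with the coherent labelling described below—each cone point develops to a position $p>0$ with $p\equiv 1,2\pmod 4$; thus $1$ is the shortest horizontal holonomy length and no saddle connection meets the half-plane $x<0$. Preservation of the set of horizontal holonomy lengths forces the $(1,1)$-entry of every Veech element to be $1$, while the absence of saddle connections for $x<0$ rules out any element with derivative $-\mathrm{Id}$; together these yield $\Gamma(S)\subseteq P$. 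Conversely, each $g=\left(\begin{smallmatrix}1&t\\0&s\end{smallmatrix}\right)\in P$ fixes the line $\{y=0\}$ \emph{pointwise}, hence preserves every mark and every gluing and descends to an affine diffeomorphism of $S$ with derivative $g$; therefore $P\subseteq\Gamma(S)$, and $\Gamma(S)=P$.

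The step I expect to be the main obstacle is exactly the inclusion $\Gamma(S)\subseteq P$, and inside it the need to keep \emph{all} developed cone points on the positive horizontal axis: this is what simultaneously produces the normalisation $a=1$ and excludes the $-\mathrm{Id}$ symmetry, placing us in $P$ rather than in $P'$. The delicate point is that a vertex shared by two paths must receive the \emph{same} horizontal position in both blocks, so that the associated gluing translation vanishes and no cone point is ever pushed into the region $x<0$; this can be arranged by positioning the mark of a vertex according to its depth in $T_X$ rather than its index along the path. Checking that this coherent labelling really prevents the multi-sheeted developing map from creating any unexpected short, or left-half-plane, saddle connection is the essential technical verification.
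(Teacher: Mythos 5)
Your plan coincides with the paper's proof of this corollary, which is literally the one-line instruction to take the plane of Construction \ref{cons:2.1} carrying only the family $M$ (no internal regluing along $L$) and to repeat Construction \ref{cons:1.1} and the argument of \S\ref{SEC:UNCOUNTABLEVEECH} verbatim; your observations that Lemma \ref{l:1.4} only uses one-endedness of the blocks and that the ``at most one common vertex'' property of $\mathfrak{T}_X$ keeps the genus at zero are correct and fill in details the paper leaves implicit.

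The place where you add something of your own --- the ``coherent labelling by depth'' --- is both unnecessary and aimed at the wrong target. Definition \ref{d:1.7} glues marks by translations, so only parallelism and equal length matter, not the positions of the two marks in their respective planes; and since the developing map is canonical only up to a global translation, ``every cone point develops into $x>0$'' is not an affine-invariant statement and cannot exclude anything by itself. What is affine-invariant, and what both your argument and the paper's actually rest on, is: (i) all saddle-connection holonomies lie in $\mathbb{Z}e_1$ with minimal modulus $1$ (true without any repositioning, since in any fixed developed chart all cone points sit at abscissae congruent to $1$ or $2$ mod $4$), which forces the $(1,1)$-entry of a Veech element to be $\pm 1$; and (ii) infinite horizontal singular rays occur only in the direction $-e_1$, which forces the sign to be $+$. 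Point (ii) is where the genus-zero variant is genuinely more delicate than Theorem \ref{T:PP}: in a block $S(\gamma)$ only the marks labelled by vertices that $\gamma$ shares with other paths of $\mathfrak{T}_X$ become singular, so whenever $\gamma$ shares only finitely many vertices with the rest of the family (as happens for the tail paths produced in Lemma \ref{l:1.2} when $X$ has a countable part), the rightward horizontal ray from the last cone point of that block is also infinite and the left/right asymmetry argument collapses as stated; in the extreme case $X=\{\ast\}$ nothing is glued at all and $S$ is the Euclidean plane, whose Veech group is ${\rm GL_{+}(2,\mathbb{R})}$. So you identified the right step to worry about, but the verification it needs concerns infinite separatrices rather than developed positions, and for general $X$ it requires an extra idea (or a restriction on $X$) that neither your write-up nor the paper supplies.
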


Indeed, consider $\mathbb{E}$ the copy of the Euclidean plane described by the Construction \ref{cons:2.1} with only the family of marks $M$ and proceed verbatim as in case 1.

\section{Proof of main results}
	\label{Section:PUZZLES}

In this chapter we present the proofs of theorems \ref{T:AF}, \ref{T:AN} and \ref{T:NN}. These follow from a general construction that we introduce in the first section of this chapter.

\subsection{Puzzles} Let $(X,G,H)$ be a triple where $X$ is a closed subset of the Cantor set, $G$ is a countable subgroup of ${\rm GL_+(2,\mathbb{R})}$ without contracting elements generated\footnote{We think of $H$ as generator subset closed under inverse elements.} by $H$. To the triple $(X,G,H)$ we will associate a family of tame translation surfaces $\mathfrak{P}(X,G,H)=\{S_g: g\in G\}$. This set will be called \emph{a puzzle} and each of its elements \emph{a piece}. As notation suggests, each piece of the puzzle is an affine copy of a fixed tame translation surface, which will be called \emph{the elementary piece of the puzzle} for it is obtained through construction 
\ref{cons:1.1}. The pieces $S_g$ of the puzzle are endowed with families of marks that do not accumulate in the metric completion, hence we can glue them together to form a \emph{tame} translation surface $S_{\mathfrak{P}}$ that we will call \emph{the assembled surface}. 
The Veech group of the surface $S_{\mathfrak{P}}$ is $G$. Moreover, this surface satisfies $Ends(S_\mathfrak{P})=Ends_{\infty}(S_\mathfrak{P})$,  and we will give an explicit description of this space of ends that will allow us to prove, for different instances of $X$, theorems \ref{T:AF}, \ref{T:AN} and \ref{T:NN}.

\textbf{The elementary piece of a puzzle}. As it name suggest, this piece is obtained using construction \ref{cons:1.1}. Recall that this construction has as initial data a closed subset $X$ of the Cantor set and, for each path $\gamma$ in the countable family of paths $\mathfrak{T}_X$ given by lemma \ref{l:1.2}, a tame Loch Ness Monster $S(\gamma)$ endowed with an infinite family of marks that do not accumulate. We build the surfaces $S(\gamma)$ glueing together 
a series of tame translation surfaces that we define in what follows. With this purpose in mind, we choose an enumeration\footnote{By enumeration of these groups when $|G|$ or $|H|$ is infinite we mean to write them as an infinite sequence.} $G:=\{g_{1},...,g_{|G|}\}$ and $H:=\{h_{1},...,h_{|H|}\}$ for elements in $G$ and $H$ respectively.

\emph{Buffer Loch Ness Monster}. For every $h_j\in H$ and $g\in G$ we construct a tame translation surface $S(g,gh_j)$ homeomorphic to the Loch Ness Monster. The purpose of these surfaces is to separate singularities during the construction of $S_{\mathfrak{P}}$, guaranteeing thus tameness. This idea already appears in construction 4.4 in \cite{PSV}. 

For each element $h_{j}\in H$, consider $\mathbb{E}(j,1)$ and $\mathbb{E}(j,2)$ two copies of the Euclidean plane equipped with origins $\overline{0}$ and an orthogonal basis $\beta= \{e_{1},e_{2}\}$. On $\mathbb{E}(j,1)$ we draw the following two families of marks:
\[
\check{M}^{j} := \{ \check{m}^{j}_{i} = (4ie_{1}, (4i+1)e_{1}) :\forall i \in \mathbb{N}\} \text{ and }
 L_1:= \{ l_{i}^1 = ((4i+2)e_{1}, \, (4i+3)e_{1}):\forall i \in \mathbb{N}\},
 \]
and on $\mathbb{E}(j,2)$:
\[
h_{j}\check{M}^{-j} := \{ h_{j}\check{m}^{-j}_{i}  =  (2ie_{2}, \, e_{1}+2ie_{2}) : \forall i \in \mathbb{N}\} \text{ and }
L_2 := \{l^{2}_{i} = ((2i+1)e_{2}, \, e_{1}+(2i+1)e_{2}) :\forall i \in \mathbb{N}\}.
\]
We define (see Figure \ref{Figure11}) \emph{the buffer surface}
\begin{equation}\label{eq:15}
S(Id,h_{j}):=\bigcup_{k=1}^{2}\mathbb{E}(j,k)\bigg/l^1_{i}\sim_{\text{glue}} l^{2}_{i}, \text{ for each } i\in \mathbb{N}.
\end{equation}
\begin{figure}[h!]
  \centering
  \includegraphics[scale=0.45]{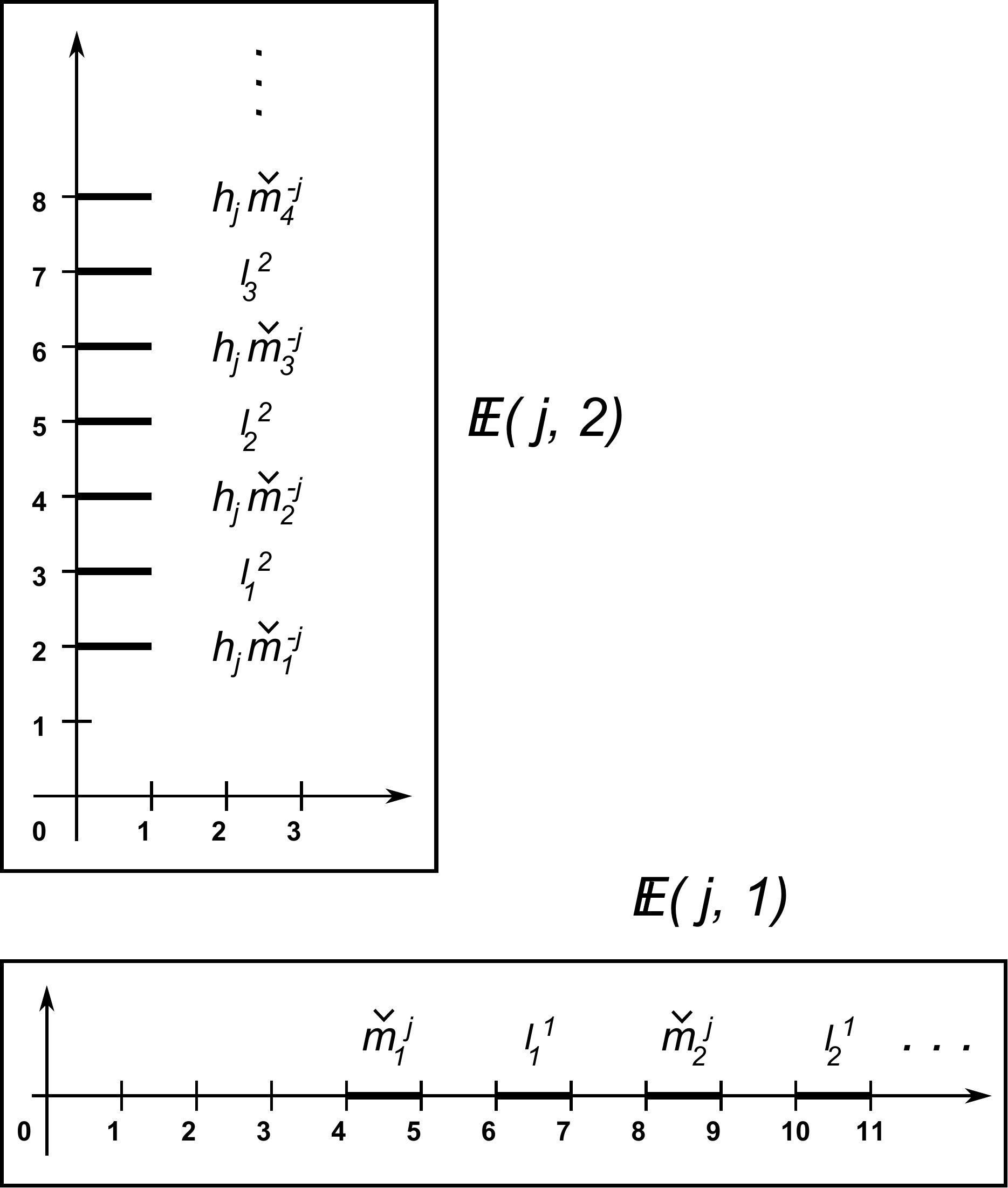}
  \caption{\emph{Buffer Loch Ness Monster $S(Id,h_j)$.}}
  \label{Figure11}
\end{figure}

Remark that by Lemma \ref{l:1.3} the surface $S(Id,h_j)$ is a tame translation surface with infinitely many conic singularities of angle $4\pi$ homeomorphic to the Loch Ness Monster. For each $g\in G$, we define:
\begin{equation}
S(g,gh_j):=g\cdot S(Id,h_j)
\end{equation}
That is, $S(g,gh_j)$ is the affine copy of $S(Id,h_j)$ obtained by postcomposing every chart by the affine transformation associated to the matrix $g$. The family of marks $\check{M}^{j}$ and $h_{j}\check{M}^{-j}$ on $S(g,gh_j)$ are relabeled as $g\check{M}^{j}$ and $gh_{j}\check{M}^{-j}$, respectively. These marks will be used later to construct $S_{\mathfrak{P}}$. As said before, the purpose of the surfaces $S(g,gh_j)$ is to separate singularities during the construction of $S_{\mathfrak{P}}$, guaranteeing thus tameness. The following lemma is essential to assure this property.
\begin{lema}
\label{l:3.1}
\cite[Lemma 4.5]{PSV} For every $g\in G$ and $h_j\in H$ the distance between $g\check{M}^{j}$ and $gh_{j}\check{M}^{-j}$ is at least $\frac{1}{\sqrt{2}}$.
\end{lema}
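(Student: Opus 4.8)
The plan is to make the geometry of $S(g,gh_j)=g\cdot S(Id,h_j)$ completely explicit and then reduce the statement to an elementary estimate about the matrix $g$. By construction $S(g,gh_j)$ is obtained from two copies $g\mathbb{E}(j,1)$, $g\mathbb{E}(j,2)$ of the plane by gluing along the slits $gl^1_i\sim gl^2_i$, and the two families of marks sit in \emph{different} sheets: every mark of $g\check M^{j}$ lies on the line $\mathbb{R}\cdot ge_1$ inside $g\mathbb{E}(j,1)$, while every mark of $gh_j\check M^{-j}$ lies inside $g\mathbb{E}(j,2)$. Hence any rectifiable arc $\rho$ joining the two families must travel from the first sheet to the second through one of the glued slits. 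First I would argue that the infimum of lengths is realised by an arc crossing a \emph{single} slit (further crossings only add length), and then develop such an arc into $\mathbb{R}^2$.

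In the developed picture the initial point $P$ lies on $\mathbb{R}\cdot ge_1$, the arc meets the developed slit $gl^1_{i''}$ at a point $C$ which also lies on $\mathbb{R}\cdot ge_1$, and it then continues to the developed image $Q$ of the target mark. Keeping track of the cut-and-reglue identification of Definition \ref{d:1.7} one checks that $Q$ lies on the parallel line $n\,ge_2+\mathbb{R}\cdot ge_1$ for a nonzero integer $n=2i'-2i''-1$, that the $ge_1$-coordinates of $Q$ and $C$ differ by at most $1$, and that those of $C$ and $P$ differ by at least $1$. Therefore
\[
\mathrm{length}(\rho)\;\ge\;|C-P|+|Q-C|\;\ge\;|ge_1|+\min_{|q|\le 1}\bigl|\,q\,ge_1+ge_2\,\bigr|,
\]
where I have used that among the admissible values of $n$ the second summand is smallest for $|n|=1$.

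To finish, write $a:=ge_1$, $b:=ge_2$ and $d:=\min_{|q|\le 1}|q a+b|$, so that the distance is bounded below by $|a|+d$. The hypothesis that $g$ is not contracting gives $\|g\|\ge 1$, and since $\|g\|^2\le |a|^2+|b|^2$ (the squared operator norm is at most the sum of the squared column norms) we obtain $|a|^2+|b|^2\ge 1$. Now a one-line dichotomy suffices. If $|b|\ge|a|$, the reverse triangle inequality gives $d\ge|b|-|a|$, so $|a|+d\ge|b|\ge\tfrac1{\sqrt2}$, the last step because $2|b|^2\ge|a|^2+|b|^2\ge1$. If instead $|b|<|a|$, then $2|a|^2>|a|^2+|b|^2\ge1$, whence $|a|+d\ge|a|>\tfrac1{\sqrt2}$. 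In both cases the distance is at least $\tfrac1{\sqrt2}$, as claimed.

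The elementary estimate in the last paragraph is painless; I expect the real work to lie in the combinatorial reduction. One must justify that a single slit-crossing attains the infimum and extract the precise relations $|C-P|\ge|ge_1|$ and $Q-C=q\,ge_1+n\,ge_2$ with $|q|\le1$, $n\in\mathbb{Z}\setminus\{0\}$ from the explicit gluing. The subtle point is that a developed segment corresponds to an actual arc in the surface only if it genuinely meets the slit; the naive straight segment from $P$ to $Q$ need not cross $gl^1_{i''}$ and so is not realisable. It is exactly this constraint that keeps the distance bounded away from $0$ even when $g$ is close to rank one.
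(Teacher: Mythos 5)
The paper does not prove this lemma: it is imported verbatim from \cite[Lemma 4.5]{PSV}, so there is no internal proof to compare against. Your reconstruction is correct and is essentially the argument of that reference: any arc joining $g\check{M}^{j}$ to $gh_{j}\check{M}^{-j}$ must meet a glued slit, the excursion in the sheet $g\mathbb{E}(j,1)$ costs at least $|ge_{1}|$, the excursion in $g\mathbb{E}(j,2)$ costs at least $\min_{|q|\le 1}|q\,ge_{1}+ge_{2}|$, and your dichotomy on $\max(|ge_{1}|,|ge_{2}|)\ge \tfrac{1}{\sqrt{2}}$ (valid because a non-contracting $g$ has $\|g\|\ge 1$ and the operator norm is dominated by the Frobenius norm) closes the estimate; the computation that $|q\,ge_1+n\,ge_2|\ge \min_{|q'|\le 1}|q'ge_1+ge_2|$ for all odd $n$ also checks out. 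The one step I would tighten is the reduction to a single slit-crossing: you do not need the infimum to be realised by such an arc, and proving that it is would be an unnecessary detour. For an arbitrary arc, bound separately the initial portion up to the \emph{first} contact with the union of slits (it lies in $g\mathbb{E}(j,1)$, where marks and slits sit on $\mathbb{R}\cdot ge_{1}$ with parameter gap at least $1$, so its length is at least $|ge_{1}|$) and the final portion after the \emph{last} contact (it lies in $g\mathbb{E}(j,2)$ and joins $g([0,1]e_{1}+(2i'+1)e_{2})$ to $g([0,1]e_{1}+2i''e_{2})$, so its length is at least $|q\,ge_{1}+n\,ge_{2}|$ with $|q|\le 1$ and $n$ odd); the middle portion contributes nonnegatively. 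This yields the same lower bound for every arc with no discussion of extremal arcs.
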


\emph{Decorated Loch Ness Monster}. Using $S(Id,h_j)$ defined above, we construct a tame translation surface $S$ homeomorphic to the Loch Ness Monster and having only one conic singularity of angle $6\pi$. This surface will be called the decorated Loch Ness Monster and its purpose is to force the Veech group of $S_{\mathfrak{P}}$ to be exactly $G$. This idea also already appears in construction 4.6 in \cite{PSV}.

Consider $\mathbb{E}$ a copy of the Euclidean plane equipped with an origin $\overline{0}$ and the same orthogonal basis $\beta= \{e_{1},e_{2}\}$ as before. On $\mathbb{E}$ we draw the following families or marks: 
\begin{equation}\label{eq:16}
\begin{array}{cl}
  M     & :=\{m_{i} = ((4i-1)e_{1}, \, 4ie_{1}): \forall i\in \mathbb{N}\} \text{ and} \\
  M^{j} & :=\{m^{j}_{i} = ((2i-1)e_{1}+(j+1)e_{2}, \, 2ie_{1}+(j+1)e_{2}): \forall  i \in \mathbb{N}, \, \forall j\in \{0,...,|H|\}.
\end{array}
\end{equation}
Now, we shall define recursively new families of marks on $\mathbb{E}$.

For $j=1$. We can choose a point $(x_{1},y_{1}) \in \mathbb{E}$ where $x_1>0$ and $y_1<0$ such that the family of marks
\[
M^{-1}:=\{m^{-1}_{i} = (ix_{1}e_{1}+y_{1}e_{2}, \, ix_{1}e_1+h^{-1}_{1}e_{1}+y_{1}e_{2}): \forall i \in \mathbb{N}\} \subset \mathbb{E},
\]
is disjoint to all marks defined on (\ref{eq:16}). For $|H|\geq j>1$. We can chose a point $(x_{j},y_{j})\in \mathbb{E}$ where $x_j>0$ and $y_j<0$ such that the family of marks
\[
M^{-j}:=\{m^{-j}_{i}=(ix_{j}e_{1}+y_{j}e_{2}, \, ix_{j}e_1+h^{-1}_{j}e_{1}+y_{j}e_{2}): \forall i \in \mathbb{N}\}\subset \mathbb{E},
\]
is disjoint to all marks defined in (\ref{eq:16}) and the step $j-1$.

On the other hand, let  $\pi:\widetilde{\mathbb{E}}\to \mathbb{E}$ be the threefold cyclic covering of  $\mathbb{E}$ branched over the origin and
\[
\widetilde{M}^{0}:=\{\widetilde{m_{i}}^{0}: \forall i \in \mathbb{N}\}.
\]
one of the three (disjoint) families of marks on $\widetilde{\mathbb{E}}$ defined by $\pi^{-1}(M)$. In addition, consider preimages $\widetilde{t_{1}}$ and $\widetilde{t_{2}}$ of $t_{1}:= (e_{2}, \, 2e_{2})$ and $t_{2} := (-e_{2},\, -2e_{2})\in \mathbb{E}$ in $\widetilde{\mathbb{E}}$ respecitvely which are in the same fold of $\widetilde{\mathbb{E}}$ as $\widetilde{M}^{0}$. We define the \emph{decorated surface} as:
\begin{equation}\label{eq:17}
S:= \left(\mathbb{E}\cup \widetilde{\mathbb{E}}\bigcup\limits_{\forall h_{j}\in H}S(Id,h_{j})\right) \Bigg/  \sim,
 \end{equation}
where $\sim$ is the equivalent relation given by glueing marks as follows (see Figure \ref{Figure12}):
\begin{enumerate}
\item $\widetilde{t_{1}}\sim_{\text{glue}}\widetilde{t_{2}}$ on $\widetilde{\mathbb{E}}$.
\item $m^{0}_{i}\sim_{\text{glue}} \widetilde{m_{i}}^{0}$ on $\mathbb{E}$ and $\widetilde{\mathbb{E}}$, respectively.
\item $m_{i}^{j}\sim_{\text{glue}}\check{m}_{i}^{j}$, for each $i\in\mathbb{N}$ and for each $j\in\{1,...,|H|\}$, on $\mathbb{E}$ and $S(Id, h_j)$, respectively.
\end{enumerate}
\begin{figure}[h!]
  \centering
  \includegraphics[scale=0.3]{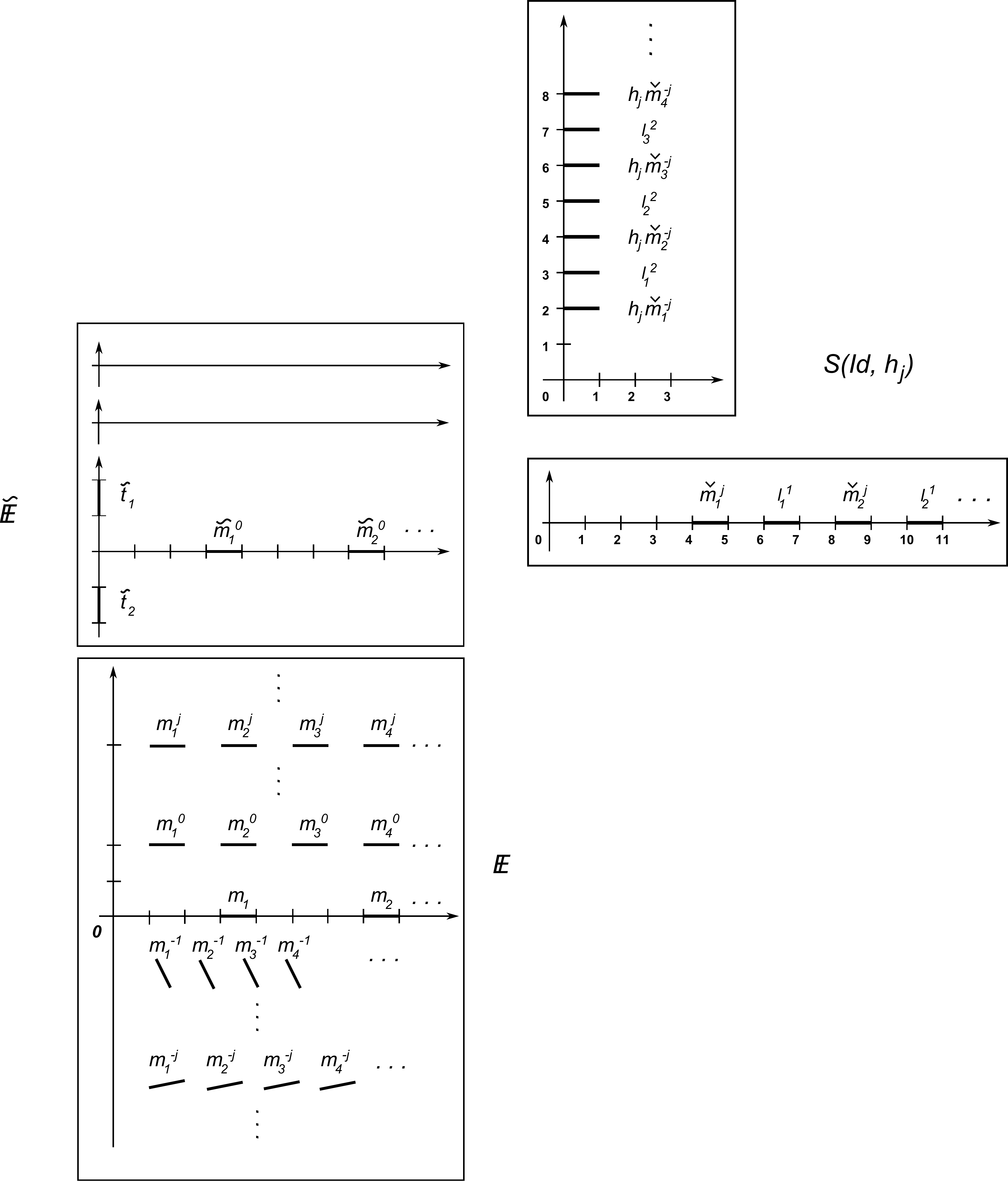}
  \caption{\emph{Decorated Loch Ness Monster $S$.}}
  \label{Figure12}
\end{figure}

By the Lemma \ref{l:1.3} the decorated surface $S$ is a tame translation surface homeomorphic to the Loch Ness Monster with infinitely many cone angle singularities of angle $4\pi$ and only one singular point of angle $6\pi$. 

\begin{remark}
The decorated surface $S$ has $2|H|+1$ families of marks left without gluing. These are $h_j\check{M}^{-j}$, $M^{-j}$ and $M$, where $j\in\{1,\ldots,|H|\}$. These marks will be used in what follows to define the elementary piece of the puzzle and to glue the surfaces $S_g$ forming the puzzle to form the surface $S_{\mathfrak{P}}$.
\end{remark}

\begin{construccion}[\emph{Elementary piece associated to puzzle $\mathfrak{P}(X,G,H)$}]
\label{cons:3.3}
Consider $X$ a closed subset of the Cantor set, the graph $T_{X}$ with ends space homeomorphic to $X$ given by Lemma \ref{l:1.1} and $\mathfrak{T}_X$ the countable family of paths given by lemma \ref{l:1.2} and decomposing $T_{X}$. Pick an arbitrary path $\tilde{\gamma}\in\mathfrak{T}_X$. We define $S(\tilde{\gamma})$ as a copy of the decorated Loch Ness Monster described above. We endow this tame translation surface with the family of marks $M$ (see preceding remark). Clearly, the family $M$ does not accumulate in the metric completion of $S(\tilde{\gamma})$. Moreover, for each $k\in\mathbb{N}$ we label the $k$-th mark $m_k$ of the family $M$ in $S(\tilde{\gamma})$ with the vertex $v_k\in\tilde{\gamma}$.

For each infinite path  $\gamma$ in $\mathfrak{T}_X\setminus\{\tilde{\gamma}\}$ we define $S(\gamma)$ as a copy of the Loch Ness Monster described in the Construction \ref{cons:2.1}. By construction, each $S(\gamma)$ is endowed with a countable family of marks which, abusing notation, we also denote by $M$. These marks do not accumulate on the metric completion. For each $k\in\mathbb{N}$ we label the $k$-th mark $m_k$ of the family $M$ in $S(\gamma)$ with the vertex $v_k\in\gamma$. The \emph{elementary piece associated to the puzzle} $\mathfrak{P}(X,G,H)$, which we denote by $S_{elem}$ is defined as the tame translation surface obtained by performing construction \ref{cons:1.1} on the initial data $X$ and the marked translations surfaces:
$$
S(\tilde{\gamma})\cup\left( \bigcup\limits_{\gamma\in\mathfrak{T}_X\setminus\tilde{\gamma}}S(\gamma)\right)
$$
\end{construccion}

\begin{remark}
\label{rem:3.1}
The elementary piece $S_{elem}$ has the following properties:
\begin{enumerate}
\item From lemma \ref{l:1.4} we can deduce that the surface $S_{elem}$ is a tame translation surface with space of ends homeomorphic to $X$ and no planar ends. 

\item The families of marks $h_{j}\check{M}^{-j}$ and $M^{-j}$, for each $j\in\{1,...,|H|\}$ have (still) not been glued to other marks.
\item By construction, there is \emph{only one} end $[U^{elem}_{n}]_{n\in\mathbb{N}}$ of $S_{elem}$ having the following property: for every $n$ there exists $U_n^{elem}$ such that $U^{elem}_{n}\cap (h_{j}\check{M}^{-j} \cup M^{-j})$ are infinitely many marks. This end is defined by taking complements of large balls centered at the origin in the Euclidean plane $\mathbb{E}$ defining the decorated Loch Ness Monster $S(\tilde{\gamma})$. We will call this end the \emph{distinguished end} of the elementary piece $S_{elem}$. 
\item On the other hand, every other end $[U_{n}]_{n\in\mathbb{N}}\in Ends(S_{elem})\setminus\{[U^{elem}_{n}]_{n\in\mathbb{N}}\}$ satisfies that there exists an $n$ such that the intersection $U_{n}$ with the families of marks $\check{M}^{-j} \cup M^{-j}$ in $S(\tilde{\gamma})$ are empty, for each $j\in\{1,...,|H|\}$.
\end{enumerate}
\end{remark}

\begin{definicion}[\emph{Puzzle and assembled surface}]
\label{d:3.1}
Let $X$ be a closed subset of the Cantor set, $G$ a countable subgroup of $\rm GL_+(2,\mathbb{R})$ without contracting elements generated by $H$. For each $g\in G$ let $S_g:=g\cdot S_{elem}$ the affine copy of $S_{elem}$ obtained by postcomposing its translation atlas with the linear transformation defined by $g$. For each $j\in\{1,...,|H|\}$ we denote by $gh_{j}\check{M}^{-j}$ and $gM^{-j}$ the families of marks on $S_g$ given by the image of the families of marks $h_{j}\check{M}^{-j}$ and $M^{-j}$ via the affine diffeomorphism $\overline{g}:S_{elem}\to S_g$ (see \S \ref{SS:TSVG}). We define the \emph{puzzle} associated to the triplet $(X,G,H)$ as the set of marked surfaces\footnote{That is, on $S_g$ we consider the families of marks $gh_{j}\check{M}^{-j}$ and $gM^{-j}$.}:
\begin{equation}\label{eq:19}
\mathfrak{P}(X,G,H):=\{S_{g}:g\in G \}.
\end{equation}
\end{definicion}
The \emph{assembled surface} associated to the puzzle $\mathfrak{P}(X,G,H)$ is defined as:
\begin{equation}\label{eq:20}
S_{\mathfrak{P}}:= \bigcup\limits_{g\in G}S_g \bigg/ \sim
\end{equation}
where $\sim$ is the equivalent relation given by gluing marks as follows. Given an edge $(g,gh_{j})$ of the Cayley graph $Cay(G,H)$,
we glue\footnote{Remark that by construction marks we glue are indeed parallel.} for each $i\in\mathbb{N}$, the mark 
$gh_{j}\check{m}_{i}^{-j}\in gh_{j}\check{M}^{-j}\subset S_{g}$ to the mark $gh_{j}m_{i}^{-j}\in gh_{j}M^{-j}\subset S_{gh_{j}}$.

\begin{remark}
The pieces $S_g$ of the puzzle inherit all affine invariant properties from $S_{elem}$, namely:
\begin{enumerate}
\item $S_g$ is a tame translation surface without planar ends whose space of ends  is homeomorphic to $X$.
\item For every $g\in G$ there is \emph{only one} end  $[U^g_n]$ of $S_g$ having the following property: for every $n$ there exists $U^g_n$ such that $U^{g}_{n}\cap (gh_{j}\check{M}^{-j} \cup gM^{-j})$ are infinitely many marks. We will call this end the \emph{distinguished end} of $S_g$. 
\item Every other end $[U_{n}]_{n\in\mathbb{N}}\in Ends(S_{g})\setminus\{[U^{g}_{n}]_{n\in\mathbb{N}}\}$ satisfies that there exists an $n$ such that the intersection $U_{n}\cap(gh_{j}\check{M}^{-j} \cup gM^{-j})=\emptyset$, for each $j\in\{1,...,|H|\}$. We will call these kind of ends \emph{common ends}. 
\end{enumerate}
\end{remark}
Property (3) above tells us that every common end $[U_n]_{n\in\mathbb{N}}\in Ends(S_g)$ has a representative $U_n$ that avoids the families of marks used to assemble the surface $S_{\mathfrak{P}}$. Therefore we can \emph{embedd} $U_n$ into $S_{\mathfrak{P}}$ (using the identity) and induce an embedding for common ends:
\begin{equation}\label{eq:21}
\begin{array}{ccccc}
 i_{g} & :  & Ends(S_{g})\setminus\{[U^{g}_{n}]_{n\in\mathbb{N}}\} & \hookrightarrow & Ends(S_{\mathfrak{P}}) \\
          &     &[U_{n}]_{n\in\mathbb{N}}                                                   &\to & [U_{n}]_{n\in\mathbb{N}}.
\end{array}
\end{equation}
Remark that the end $i_{g}([U_{n}]_{n\in\mathbb{N}})$ of $S_{\mathfrak{P}}$ is not planar. The following proposition describes the space of ends of the assembled surface \emph{as a set}.
\begin{teorema}\label{pro:3.1}
The assembled surface $S_{\mathfrak{P}}$ is a tame translation surface without planar ends and its Veech group is $G$. Moreover:
\begin{equation}\label{eq:22}
Ends(S_{\mathfrak{P}})=\{[\widetilde{U}_{n}]_{n\in\mathbb{N}}\}\cup\left(\bigsqcup\limits_{g\in G}i_{g}(Ends(S_{g})\setminus\{[U^{g}_{n}]_{n\in\mathbb{N}}\}) \right). 
\end{equation}
\end{teorema}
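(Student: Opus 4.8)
The statement has three independent assertions, and the plan is to treat them in order, reserving the Veech-group computation for last as it is the crux.

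First I would establish \emph{tameness and the absence of planar ends}. Tameness is essentially inherited from the elementary piece together with Lemma \ref{l:3.1}: the assembled surface $S_{\mathfrak{P}}$ is built by gluing the copies $S_g$ along the families $gh_j\check{M}^{-j}$ and $gh_jM^{-j}$, and the uniform lower bound $\frac{1}{\sqrt{2}}$ on the distance between $g\check{M}^{j}$ and $gh_j\check{M}^{-j}$ guarantees that the singularities introduced by these gluings do not accumulate in the metric completion. One must also check that at each vertex of $Cay(G,H)$ only finitely many gluings meet any compact set, which follows because $Cay(G,H)$ is locally finite (as $H$ generates $G$ and we think of $H$ as finite-to-countable, with marks indexed discretely). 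That $S_{\mathfrak{P}}$ has no planar ends follows from Remark \ref{rem:3.1}(1): each piece $S_g$ already has infinite genus concentrated near every end, and gluing along marks never destroys genus (cf.\ the remark after Definition \ref{d:1.7}), so every end of $S_{\mathfrak{P}}$ carries genus.

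Next I would prove the \emph{decomposition (\ref{eq:22}) of the space of ends}. The idea is to classify ends of $S_{\mathfrak{P}}$ according to whether an exhausting sequence eventually lands inside a single piece $S_g$ or escapes through infinitely many pieces. An end of the second type is represented by a sequence meeting the gluing marks $gh_j\check{M}^{-j}$ for unboundedly many $g$; tracking these marks amounts to following an infinite path in $Cay(G,H)$, and since the buffer surfaces force all such escaping directions to coalesce, there is exactly one such end $[\widetilde U_n]_{n\in\mathbb{N}}$. Every other end has a representative avoiding the assembling marks, hence (by Property (3) / the embedding (\ref{eq:21})) comes from a unique common end of a unique piece $S_g$; conversely each common end embeds into a distinct end of $S_{\mathfrak{P}}$, and common ends of different pieces give disjoint ends because their representatives live in disjoint regions of $S_{\mathfrak{P}}$. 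This yields the disjoint union on the right-hand side of (\ref{eq:22}); injectivity of the $i_g$ and disjointness across $g$ are the routine verifications here.

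The main obstacle is the \emph{Veech group computation} $\Gamma(S_{\mathfrak{P}})=G$, and I would split it into the two inclusions. For $G\subseteq\Gamma(S_{\mathfrak{P}})$, each $g\in G$ acts by permuting the pieces via the left-translation $S_{g'}\mapsto S_{gg'}$, which is compatible with the gluing pattern precisely because the marks are glued along edges $(g',g'h_j)$ of $Cay(G,H)$ and left multiplication preserves this edge structure; the induced map is affine with derivative $g$, so $g\in\Gamma(S_{\mathfrak{P}})$. The harder inclusion is $\Gamma(S_{\mathfrak{P}})\subseteq G$, and this is where the decorated Loch Ness Monster does its work: the unique $6\pi$ cone singularity in each piece $S_g$ carries a distinguished local configuration of saddle-connection directions determined by $g$, so any affine diffeomorphism $f$ must send the $6\pi$-singularity of $S_g$ to that of some $S_{g'}$, and matching the holonomy data of the marks $M^{-j}$ (which encode the generators $h_j$) forces $D(f)=g'g^{-1}\in G$. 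I expect the delicate point to be arguing that an affine diffeomorphism cannot permute pieces in a way that escapes $G$ — this requires that the combinatorics of how the distinguished singularities are linked reproduces $Cay(G,H)$ faithfully, so that $\Aff_+(S_{\mathfrak{P}})$ surjects onto $G$ with no extra derivatives; here I would lean on the analogous arguments in \cite{PSV} (constructions 4.4 and 4.6) which this puzzle construction is designed to mirror.
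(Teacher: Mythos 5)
Your plan follows the paper's proof essentially step for step: tameness via Lemma \ref{l:3.1} (completeness of $S_{\mathfrak{P}}$ plus discreteness of its singularities), the ends decomposition by separating ends that eventually stay in one piece from the single escaping ``secret'' end, and the Veech group via the left-translation action of $G$ on the pieces for $G\subseteq\Gamma(S_{\mathfrak{P}})$ and the unique $6\pi$-singularity $x(g)$ with saddle-connection holonomy $\{\pm g\cdot e_1,\pm g\cdot e_2\}$ for the reverse inclusion. One small caveat: your appeal to local finiteness of $Cay(G,H)$ is unavailable when $H$ is infinite, but it is also unnecessary, since the paper's tameness argument rests only on the $\tfrac{1}{\sqrt{2}}$ separation of Lemma \ref{l:3.1} together with the discreteness of singularities inside each piece.
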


We insist that the preceding is an equality in the category of sets, not in the category of topological spaces. In particular, this result says that all distinguished ends in the puzzle $\mathfrak{P}(X,G,H)$ \emph{merge into a single end} $\{[\widetilde{U}_{n}]_{n\in\mathbb{N}}\}$ when constructing $S_{\mathfrak{P}}$. We will call this end the \emph{secret end} of $S_{\mathfrak{P}}$. 


\begin{proof}
\emph{We begin by showing that the assembled surface $S_{\mathfrak{P}}$ is tame}. This follows from the following two facts:
\begin{itemize}
\item \emph{The surface $S_{\mathfrak{P}}$ is a complete metric space\footnote{W.r.t. the distance induced by the natural flat metric on $S_{\mathfrak{P}}$}.} Indeed, let $(x_n)_{n\in \mathbb{N}}\subset S_{\mathfrak{P}}$ be a Cauchy sequence. Lemma \ref{l:3.1} implies that the cost (in distance inside $S_{\mathfrak{P}}$) to scape from a piece $S_g$ of the puzzle is at least $\frac{1}{\sqrt{2}}$. Hence, the sequence $(x_n)$ is eventually contained in the the closure (in $S_{\mathfrak{P}}$) of the 
open subset: 
\[
U(g):= S_{g}\setminus \bigcup_{j=1}^{|H|}(gh_{j}\check{M}^{-j}\cup gM^{-j}),
\]
and this closure is, by construction, a complete metric  subspace of $S_{\mathfrak{P}}$.
\item \emph{The set of singularities of $S_{\mathfrak{P}}$ is discrete in $S_{\mathfrak{P}}$}. This follows from lemma \ref{l:3.1} and the fact that the set of singularities at each piece $S_g$ of the puzzle is discrete.
\end{itemize}
We now proof that the Veech group of $S_{\mathfrak{P}}$ is $G$. For every $g,g'\in G$ there is a natural affine diffeomorphism $f_{gg'}:S_g\to S_{g'g}$ whose differential is precisely $g'$. These transformations send parallel marks to parallel marks, therefore one can glue all $f_{gg'}$'s together to induce an affine diffeomorphism in the quotient $F_{g'}:S_{\mathfrak{P}}\to S_{\mathfrak{P}}$ whose differential is precisely $g'$. Since $g'$ was arbitrary we have that $G<\Gamma(S_{\mathfrak{P}})$. 
When constructing the elementary piece $S_{elem}$ we added in purpose a decorated Loch Ness Monster so that $S_{elem}$ has only one $6\pi$ singularity $x(Id)$ and only three saddle connections $\gamma_1$, $\gamma_2$, $\gamma_3$ issuing from it. Moreover, the holonomy vectors of these saddle connections are $\{\pm e_1, \pm e_2\}$. This implies that every piece $S_g$ in the puzzle $\mathfrak{P}(X,G,H)$ has \emph{only one} singularity $x(g)$ of total angle $6\pi$ only three saddle connections $\gamma_1$, $\gamma_2$, $\gamma_3$ issuing from it. The holonomy vectors of these are $\{\pm g\cdot e_1, \pm g\cdot e_2\}$. On the other hand suppose that an affine diffeomorphism $f\in{\rm Aff}_+(S_{\mathfrak{P}})$ sends $x(Id)\in S_{elem}=S_{Id}$ to $x(g)$. Its derivative $Df$ must then send $\{\pm e_1, \pm e_2\}$ to $\{\pm g\cdot e_1, \pm g\cdot e_2\}$ and have positive determinant. The only possibility is $Df=g$, therefore $\Gamma(S)<G$.

We address now equation (\ref{eq:22}). Given that this is the most technical part of the proof we divide our approach in three steps:

\textbf{I.} For any two different elements $g\neq g'$ in $G$ we have that the embeddings defined in (\ref{eq:21}) have disjoint images:
\begin{equation}
	\label{e:disj:ends}
i_{g}(Ends(S_{g})\setminus\{[U^{g}_{n}]_{n\in\mathbb{N}}\})\bigcap i_{g^{'}}(Ends(S_{g^{'}})\setminus\{[U^{g^{'}}_{n}]_{n\in\mathbb{N}}\})=\emptyset.
\end{equation}
Therefore $\bigsqcup\limits_{g\in G}i_{g}(Ends(S_{g})\setminus\{[U^{g}_{n}]_{n\in\mathbb{N}}\})$ is a subset of $Ends(S_{\mathfrak{P}})$.  Indeed, consider two ends  $[W_{n}]_{n\in\mathbb{N}}\in i_{g}(Ends(S_{g})\setminus\{[U^{g}_{n}]_{n\in\mathbb{N}}\})$ and $[Z_{n}]_{n\in\mathbb{N}}\in  i_{g^{'}}(Ends(S_{g^{'}})\setminus\{[U^{g^{'}}_{n}]_{n\in\mathbb{N}}\})$. Without loss of generality we can suppose that  $[W_{n}]_{n\in\mathbb{N}}$ and $[Z_{n}]_{n\in\mathbb{N}}$ are ends of the pieces $S_g$ and $S_{g^{'}}$, respectively. 
Given that $[W_{n}]_{n\in\mathbb{N}}$ is different from the distinguished $[U_n^g]_{n\in\mathbb{N}}$, there exists a representative $W_N$ which does not intersect any of the buffer surfaces in the decorated Loch Ness Monster forming $S_g$. Since path from $S_g$ to $S_{g'}$ has to go through one of these buffer surfaces, there exists a representative $Z_M$ such that $W_N\cap Z_M=\emptyset$. Therefore the ends  $[W_{n}]_{n\in\mathbb{N}}$ and $[Z_{n}]_{n\in\mathbb{N}}$ are disjoint in $Ends(S_{\mathfrak{P}})$.

\textbf{II.} There exists an end $[\widetilde{U}_{n}]_{n\in\mathbb{N}}$, that we will call \emph{the secret end}, in the complement of $\bigsqcup\limits_{g\in G}i_{g}(Ends(S_{g})\setminus\{[U^{g}_{n}]_{n\in\mathbb{N}}\})$  in $Ends(S_{\mathfrak{P}})$ . We construct this end inductively in what follows. First we choose an enumeration $G:=\{g_{1},...,g_{|G|}\}$ and $H:=\{h_{1},...,h_{|H|}\}$ for elements in $G$ and $H$ respectively. Since surfaces are $\sigma$-compact spaces, for every $g\in G$ there exist an exhaustion of $S_g=\bigcup_{n\in\mathbb{N}}gK_{n}$ by compact sets \emph{whose complements define the ends space of the surface}. More precisely, we can write 
\begin{equation}\label{eq:23}
S_g\setminus gK_{n} := gU^{n}_{1}\sqcup ...\sqcup gU^{n}_{k(n)} \sqcup ...\sqcup gU^{n}_{i_n},
\end{equation}
where each $gU^{n}_{k(n)}$ with $k(n)\in \{1,...,i_n\}$ is a connected component whose closure in $S_g$ is noncompact, but has compact boundary, and for every $k(n+1)\in\{1,...,i_{n+1}\}$ there exist $k(n)\in\{1,...,i_n\}$ such that $gU_{k(n)}^n \supset gU^{n+1}_{k(n+1)}$. In other words, the space $Ends(S_g)$ are all nested sequences $(gU^{n}_{k(n)})_{n\in \mathbb{N}}$. Without loss of generality, we can assume that  $[gU^n_1]_{n\in\mathbb{N}}$ defines the distinguished end $[U^{g}_{n}]_{n\in\mathbb{N}}$ of $S_g$ for all $g\in G$.
 Now consider the $g_{1}\in G$. By taking $g_1K_1$ big enough we have the following decomposition into connected components:
\[
S_{\mathfrak{P}}\setminus g_1K_1=\widetilde{U_1}\sqcup g_1U_2^1\sqcup\ldots\sqcup g_1U_{i_1}^1
\]
where  $g_1U_j^1\subset S_{g_1}$ and  $g_1U_j^1\cap U_n^{g_1}=\emptyset$ for all $j=2,\ldots,i_1$ and $n$ big enough. By definition, the connected component 
 $\widetilde{U}_{1}\subset S_{\mathfrak{P}}$ contains  $U^{g_{1}}_{1}=g_1U_1^1$, has compact boundary and is not planar for $U^{g_{1}}_{1}\subset \widetilde{U}_{1}$ has infinite genus. To define $\widetilde{U}_n$ for $n>1$ consider the "first" $n$ elements $g_{1},...,g_{n}$ in $G$. By taking $g_jK_n$ big enough we have the following decomposition in connected components:
  \[
      S_{\mathfrak{P}}\setminus \bigcup_{k=1}^{n}g_{k}K_{n}=\widetilde{U}_{n}\sqcup \left( \bigsqcup_{k=1}^{n}\left(g_{k}U^{n}_{2}\sqcup...\sqcup g_kU^{n}_{k(n)}\sqcup...\sqcup g_{k}U^{n}_{i_n}\right)\right).
      \] 
where $g_kU_{k(n)}^n\subset S_{g_k}$ and $g_kU_{k(n)}^n\cap U_m^{g_k}=\emptyset$ 
for all $k=1,\ldots,n$, $k(n)=2,\ldots,i_n$ and 
$m$ big enough. By definition, the connected component $\widetilde{U}_n$ contains $\bigcup_{k=1}^{n}U^{g_{k}}_{n}$, has compact boundary and is not planar. Moreover $\widetilde{U}_{n-1}\supset\widetilde{U}_{n}$ and, since $\{gK_n\}_{n\in\mathbb{N}}$ is an exhaustion of $S_g$, for every compact subset $K$ in $S_{\mathfrak{P}}$, there exists $N$ such that $\widetilde{U}_N\cap K=\emptyset$. In other words, $[\widetilde{U}_n]_{n\in\mathbb{N}}$ is an element of $Ends(S_{\mathfrak{P}})$ and is not planar. In Figure (\ref{Figure13}) we depict this secret end when $G$ is an infinite cyclic group. 

Now let $[V_n]_{n\in\mathbb{N}}\in\bigsqcup_{g\in G}i_{g}(Ends(S_{g})\setminus\{[U^{g}_{n}]_{n\in\mathbb{N}}\})$. Without loss of generality, we can suppose that $[V_n]_{n\in\mathbb{N}}$ is an element in $Ends(S_{g_k})\setminus[U^{g_k}_n]_{n\in\mathbb{N}}$ for some $g_k\in G$.  Given that $[V_n]_{n\in\mathbb{N}}$ is not a distinguished end, there exist $m,l\in\mathbb{N}$ such that $V_m\subset S_{g_k}\setminus g_kK_l$ and $V_m$ is disjoint from all buffer surfaces in the decorated Loch Ness Monster in $S_{g_k}$, hence $V_m\cap S_{g'}=\emptyset$ for every $g'\neq g$. This implies that $[\widetilde{U}_n]_{n\in\mathbb{N}}$ cannot belong to $\bigsqcup_{g\in G}i_{g}(Ends(S_{g})\setminus\{[U^{g}_{n}]_{n\in\mathbb{N}}\})$. 

\textbf{III.}The secret end $[\widetilde{U}_{n}]_{n\in\mathbb{N}}$ and $\bigsqcup\limits_{g\in G}i_{g}(Ends(S_{g})\setminus\{[U^{g}_{n}]_{n\in\mathbb{N}}\})$ \emph{is all there is} in $Ends(S_{\mathfrak{P}})$. Consider $[W_{n}]_{n\in\mathbb{N}}$ an end of $S_{\mathfrak{P}}$. For every $n\in \mathbb{N}$ there exist $l(n)\in \mathbb{N}$ such that
\[
W_{l(n)}\subset S_{\mathfrak{P}}\setminus \bigcup_{k=1}^{n}g_{k}K_{n}=\widetilde{U}_{n}\sqcup\left(\bigsqcup_{k=1}^{n} \left(g_{k}U_{2}^{n}\sqcup...\sqcup g_{k}U^{n}_{i_n}\right)\right). 
\]
There are two cases to consider. First suppose that there exist $N\in\mathbb{N}$, such that $W_{l(N)}\subset g_{k}U^{N}_{j}$ for some $(k,j)\in\{1,...,n\}\times \{1,...,i_n\}$. 
In this situation we have $[W_{n}]_{n\in\mathbb{N}}\in i_{g}(Ends(S_{g})\setminus\{[U^{g}_{n}]_{n\in\mathbb{N}}\})$ for some $g\in G$. 
Suppose now that for every $n\in\mathbb{N}$, there exists $l(n)$ such that $W_{l(n)}\subset \widetilde{U}_{n}$. If we fix $n$, there exist $k(n)$ such that $\widetilde{U}_{k(n)}\cap \partial W_{n}=\emptyset$. Hence, the open subset $\widetilde{U}_{k(n)}\subset S_{\mathfrak{P}}$ is contained in a connected component of $S_{\mathfrak{P}}\setminus \partial W_{n}$. Given our assumption, there exist $l(k(n))\in \mathbb{N}$, such that $W_{l(k(n))}\subset\widetilde{U}_{k(n)}$. Now, since $\widetilde{U}_{k(n)}$ is connected we have that either $W_{n}\subset W_{l(k(n))}$ or $W_{l(k(n))}\subset W_{n}$ implies that the connected component of $S_{\mathfrak{P}}\setminus \partial W_{n}$ containing $\widetilde{U}_{k(n)}$ is precisely $W_n$. We conclude then that for every $n$ there exists $k(n)$ such that $\widetilde{U}_{k(n)}\subset W_{n}$ and hence the ends $[\widetilde{U}_n]_{n\in\mathbb{N}}$ and $[W_n]_{n\in\mathbb{N}}$ are the same.

To finish the proof remark that, by construction, all ends in (\ref{eq:22})
have infinite genus.
\end{proof}

\begin{figure}[h!]
  \centering
  \includegraphics[scale=0.35]{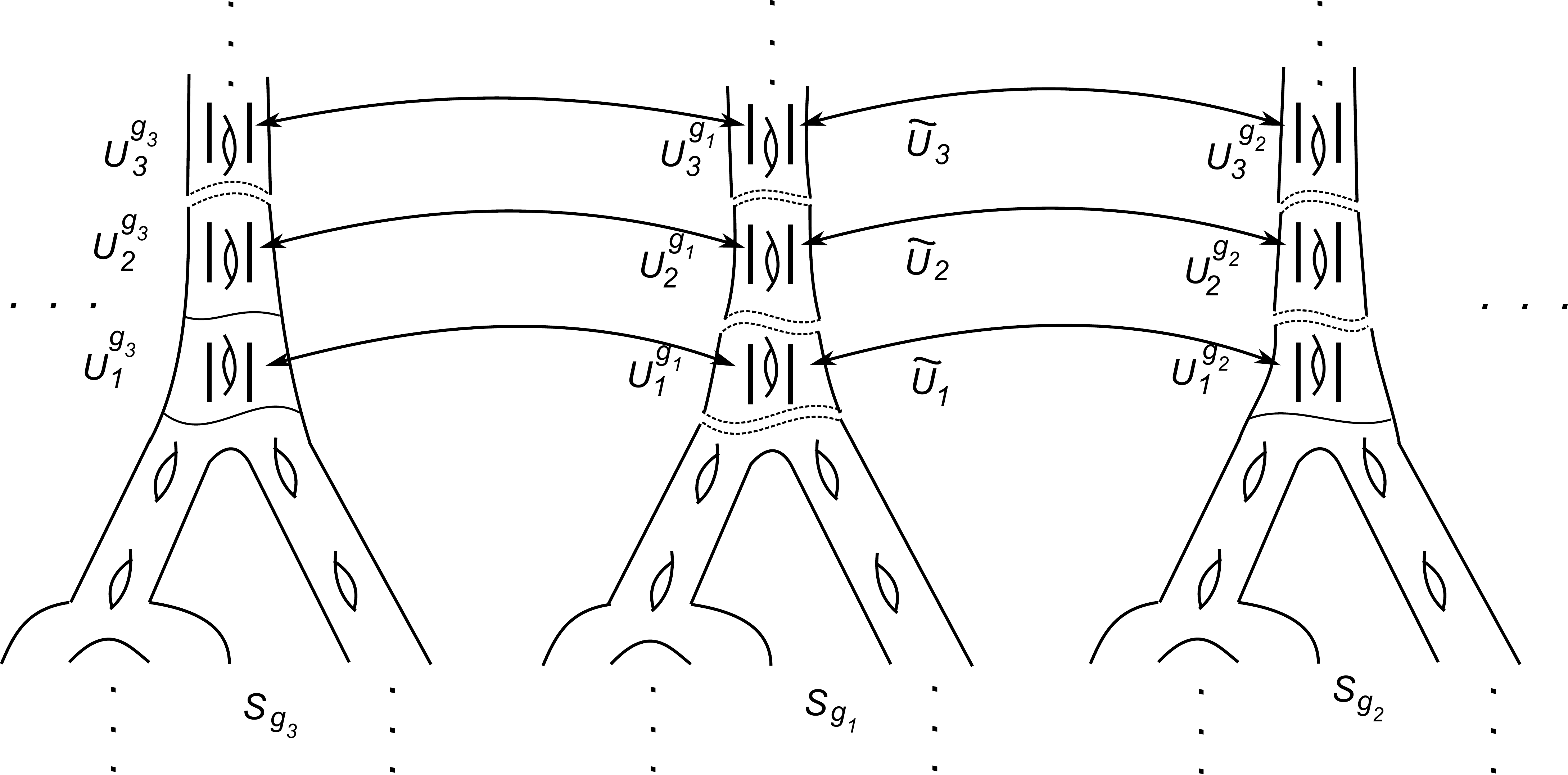}
  \caption{\emph{Secret end of $S_{\mathfrak{P}}$ when $G=\mathbb{Z}$.}}
  \label{Figure13}
\end{figure}



\subsection{Proof of the Theorem \ref{T:AF}.} 
Let $2^\omega$ denote the Cantor set and consider the puzzle $\mathfrak{P}(2^{\omega},G,H)$. Following theorem \ref{pro:3.1}, it is sufficient to prove that $Ends(S_\mathfrak{P})$ has no isolated points. Let then $W^*$ be an open neighbourhood of $[U_n]_{n\in\mathbb{N}}\in Ends(S_\mathfrak{P})$. From equation (\ref{eq:22}) we have two cases to consider. First, suppose that there exist $g\in G$ such that $[U_{n}]_{n\in\mathbb{N}}\in i_{g}(Ends(S_{g})\setminus\{[U^{g}_{n}]_{n\in\mathbb{N}}\})$. In this case there exist an open subset $V$ of $S_\mathfrak{P}$ with compact boundary such that $U_l\subset V\subset W\cap S_g$. From remark \ref{rem:3.1}, we know that $Ends(S_g)$ is homeomorphic to $2^\omega$. Hence $V^*\setminus [U_n]_{n\in\mathbb{N}}\subset W^*\setminus [U_n]_{n\in\mathbb{N}}$ is not empty. Now suppose that $[U_{n}]_{n\in\mathbb{N}}$ is equivalent to the secret end $[\widetilde{U}_{n}]_{n\in\mathbb{N}}$. We know that there exists $U_l\subset W$ and, by construction, $U_k\subset U_l$ such that $U_k^*\subset W^*$ contains $Ends(S_g)\setminus [U_n^g]_{n\in\mathbb{N}}$, for some $g\in G$. Since the latter is homeomorphic to $2^\omega$, we have that $W^*\setminus[U_n]_{n\in\mathbb{N}}$ is not empty.
\rightline{$\Box$}

\subsection{Proof of the Theorem \ref{T:AN}.}  Consider the ordinal number $\omega^k+1$, for a fixed $k\in\mathbb{N}$. Following theorem \ref{pro:3.1} it is sufficient to prove that $Ends(S_\mathfrak{P})$ is homeomorphic to $\omega^k+1$. Roughly speaking the idea of the proof is the following: first we choose properly the path $\tilde{\gamma}$ in
construction \ref{cons:3.3}, so that the k-th iterate of the Cantor-Bendixon derivative on $Ends(S_g)$ is precisely the distinguished end $[U_n^g]$ for all $g\in G$. Using this and some properties of the secret end for this particular case, we will define a countable topological space $Y_k$. Finally, we will prove that $Ends(S_\mathfrak{P})$ is homeomorphic to $Y_k$ and that the characteristic system of $Y_k$ is precisely $(k,1)$.

Fix a topological embedding $\omega^k+1\hookrightarrow Ends(T2^\omega)$, let 
$T_{\omega^k+1}$ be the graph with ends space homeomorphic to ordinal number $\omega^k+1$ given by lemma \ref{l:1.1} and  $\mathfrak{T}_{\omega^k+1}$ the countable family of paths given by lemma \ref{l:1.2}. Given that $\omega^k+1$ is countable, the sets $Ends(T_{\omega^k+1})$ and $\mathfrak{T}_{\omega^k+1}$ are in bijection (see corollary \ref{c:1.3}). Let then $\tilde{\gamma}\in\mathfrak{T}_{\omega^k+1}$ be the infinite path corresponding to the only point left in $Ends(T_{\omega^k+1})$ after the k-th iteration of the Cantor-Bendixon derivative.  We perform then construction \ref{cons:3.3} of the elementary piece of the Puzzle $\mathfrak{P}(\omega^k+1,G,H)$ choosing the infinite path $\tilde{\gamma}\in \mathfrak{T}_{\omega^k+1}$, associated to the decorated Loch Ness Monster, as above. 
With this choice we assure that the k-th iteration of the Cantor-Bendixon derivative on the space of ends of every piece $S_g$ of the puzzle $\mathfrak{P}(\omega^k+1,G,H)$  is precisely the distinguished end $[U_n^g]_{n\in\mathbb{N}}$. 
Let $U\subset S_\mathfrak{P}$ be a connected open subset with compact boundary defining an open neighborhood $U^*$ of the secret end  $[\widetilde{U}_{n}]_{n\in\mathbb{N}}$ in $Ends(S_\mathfrak{P})$. Then there  exist a finite subset $G(U^{\ast})\subset G$ such that:
\begin{enumerate}
\item For every $g\in G(U^{\ast})$, $\partial U\cap S_{g}\neq \emptyset$ and $i_{g}(Ends(S_{g})\setminus\{[U^{g}_{n}]_{n\in\mathbb{N}}\})$ is not properly contained in $U^*$. 
\item  For every element $g\in G\setminus G(U^{\ast})$, $\partial U \cap S_{g}=\emptyset$ and  $i_{g}(Ends(S_{g})\setminus\{[U^{g}_{n}]_{n\in\mathbb{N}}\})\subset U^{\ast}$.
\end{enumerate}
To illustrate this properties consider the non-trivial example $G=\mathbb{Z}$ and $Ends(S_g)$ homeomorphic to $\omega+1$ depicted in Figure \ref{Fig:SpaceX}.
\begin{figure}[h!]
  \centering
  \includegraphics[scale=0.55]{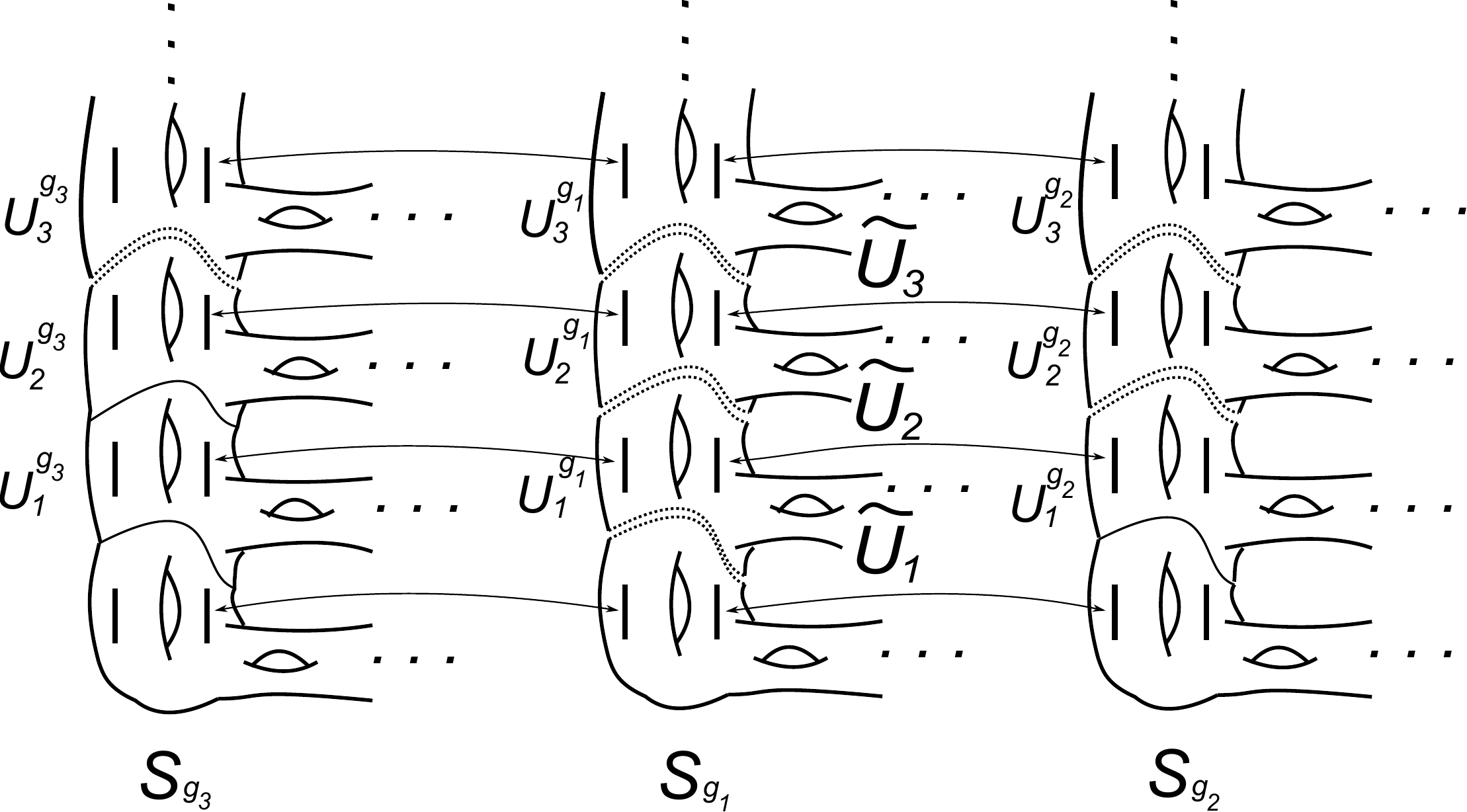}
  \caption{The surface $S_{\mathfrak{P}}$ when $G=\mathbb{Z}$ and $Ends(S_g)$ is homeomorphic to $\omega+1$.}
  \label{Fig:SpaceX}
\end{figure}

For every $g\in G$ let $\omega^{k}_{g}$ be a copy of the ordinal number $\omega^{k}$ and define
\begin{equation}\label{eq:27}
Y_k:=\{y\}\cup\left(\bigsqcup_{g\in G}\omega^{k}_{g}\right),
\end{equation}
where $y$ is just an abstract point. We endow $Y_k$ with a topology as follows. Let $U^*$ be an open neighbourhood of the secret end  $[\widetilde{U}_n]_{n\in\mathbb{N}}$  of $S_{\mathfrak{P}}$ and $G(U^{\ast})\subset G$ a finite subset with $m(U^*)$ elements defined as above. For every   $\{\gamma_{g_{j_1}},\dots,\gamma_{g_{j_{m(U^{\ast})}}}\}\in \prod_{g\in G(U^*)} \omega^{k}_g$, we define :
\begin{equation}
	\label{Eq:BasicOpen}
W(G(U^{\ast}),\{\gamma_{g_{j_1}},\dots,\gamma_{g_{j_{m(U^{\ast})}}}\}):=\{y\}\cup\left(\bigsqcup_{n=1}^{m(U*)}\{\beta\in\omega_{g_{j_n}}^{k}:\beta\succ\gamma_{g_{j_n}}\}\right)\cup\left(\bigsqcup_{g\in G\setminus G(U^{\ast})}\omega^{k}_{g}\right)\subset Y_k.
\end{equation}

Then $\mathcal{B}:=\{W(G(U^{\ast}),\{\gamma_{g_{j_1}},\dots,\gamma_{g_{j_{m(U^{\ast})}}}\})\}\cup\{W: W$ is an open subset of $\omega^{k}_{g}$ for any $g\in G\}$ is the basis for the topology of $Y_k$. Remark that $Y_k$ is a countable Hausdorff space with respect to this topology.

We now prove that $Y_k$ is homeomorphic to $Ends(S_\mathfrak{P})$. By the way we chose the infinite path $\tilde{\gamma}\in \mathfrak{T}_{\omega^k+1}$, for every $g\in G$ there exists a homeomorphism $f_g:i_g(Ends(S_g)\setminus\{[U^g_n]_{n\in\mathbb{N}}\})\to \omega_g^k$. Define $F:Ends(S_{\mathfrak{P}})\to Y_k$  as
\[
[V_{n}]_{n\in\mathbb{N}}  \to  \left\{
                                             \begin{array}{cl}
                                               f_{g}([V_{n}]_{n\in\mathbb{N}}) & \hbox{if $[V_{n}]_{n\in\mathbb{N}}\in i_g(Ends(S_g)\setminus\{[U^g_{n}]_{n\in\mathbb{N}}\})$ for some $g\in G$,} \\
                                               y & \hbox{if $[V_{n}]_{n\in\mathbb{N}}$ is the secret end $[\widetilde{U}_{n}]_{n\in\mathbb{N}}$.}
                                             \end{array}
                                           \right.
\]
To prove that $F$ is a homeomorphism it is sufficient to prove that it is continuous at the secret end $[\widetilde{U}_{n}]_{n\in\mathbb{N}}$, for every continuous map from a compact and Hausdorff space into a Hausdorff space is a closed map (see \cite[p. 226]{Dugu}). Consider an open neighbourhood $W(G(U^{\ast}),\{\gamma_{g_{j_1}},\dots,\gamma_{g_{j_{m(U^{\ast})}}}\})$ of $y$ as in (\ref{Eq:BasicOpen}). Since $G(U^*)$ has $m(U^*)<\infty$ elements, there exists a compact set $K$ in $S_\mathfrak{P}$ such that its complement $U_K$ satisfies:
\begin{enumerate}
\item $U_K^*\cap i_{g_{j_n}}(Ends(S_{g_{j_n}})\setminus \{[U_n^{g_{j_n}}]_{n\in\mathbb{N}}\})\subset F^{-1}(\{\beta\in\omega_{g_{j_n}}^{k}:\beta\succ\gamma_{g_{j_n}}\})$ for all $n=1,\ldots,m(U^*)$ 
\item $[\widetilde{U}_n]_{n\in\mathbb{N}}\in U_K^*$.
\end{enumerate}
Therefore $F(U_K^*)\subset W(G(U^{\ast}),\{\gamma_{g_{j_1}},\dots,\gamma_{g_{j_{m(U^{\ast})}}}\})$ as desired and hence $F$ is continuous. 

We claim that the characteristic system of $Y_k$ is $(k,1)$. To see this first remark that for each $g\in G$, the copy $\omega_g^k$ of $\omega^k$ figuring in the right-hand side of (\ref{eq:27})  is (topologically) embedded in $Y_k$. On the other hand, given that $\omega^k$ is not a limit ordinal, $y$ is a limit  point of the subset $\sqcup_{g\in G}\omega_g^k$. These two facts combined imply that
the set of accumulation points of $Y_k$ is precisely $Y_{k-1}$ for every $k\geq 2$ and just the singleton $\{y\}$ when $k=1$. Hence the $(k-1)$-th iteration of the Cantor-Bendixon derivative on $Y_k$ yields $Y_1$, which has characteristic system $(1,1)$. This implies that the characteristic system of $Y_k$ is precisely $(k,1)$ and the proof is complete.

\rightline{$\Box$}

\subsection{Proof of Theorem \ref{T:NN}.} 
Following theorem \ref{pro:3.1}, it is sufficient to prove that $Ends(S_\mathfrak{P})$ is homeomorphic to $B\sqcup U$, where by hypothesis $B$ is homeomorphic to the Cantor set and $U$ is a countable discrete set of points such that $\partial U = u\in B$.  As in the proof of theorem \ref{T:AN}, we deal first with $\mathfrak{P}(B\sqcup U,G,H)$'s elementary piece. From corollary \ref{c:1.4} we know that there exists a path $\tilde{\gamma}\in \mathfrak{T}_{B\sqcup U}$ 
that defines the end of $T_{B\sqcup U}$ 
corresponding to $u$, the boundary of $U$ in $B\sqcup U$. We perform construction \ref{cons:3.3} of the elementary piece taking the decorated Loch Ness Monster as $S(\tilde{\gamma})$, where $\tilde{\gamma}$ is chosen just as mentioned before. This way, for every $g\in G$ we have that $Ends(S_g)=B_g\sqcup U_g$, with $B_g$ homeomorphic to the Cantor set, $U_g$ discrete, countable and $\partial U_g$ is equal to the distinguished end $[U_n^g]_{n\in\mathbb{N}}$. On the other hand, given that for every $g\in G$ the map $i_g:Ends(S_g)\setminus\{[U_n^g]_{n\in\mathbb{N}}\}\to Ends(S_\mathfrak{P})$ defined in (\ref{eq:21}) is an embedding, $Ends(S_\mathfrak{P})$ is uncountable and:
\begin{equation}
	\label{equationRef2}
i_{g}(Ends(S_{g})\setminus\{[U^{g}_{n}]_{n\in\mathbb{N}}\}) =i_{g} (B_{g} \setminus \{ [U^{g}_{n}]_{n\in\mathbb{N}} \}) \sqcup i_{g}(U_{g}).
\end{equation}
By the Cantor-Bendixon theorem \ref{t:1.2}, we can write $Ends(S_{\mathfrak{P}})=B_\mathfrak{P}\sqcup U_\mathfrak{P}$, where $B_\mathfrak{P}$ is homeomorphic to the Cantor set and $U_\mathfrak{P}$ is discrete and countable. From theorem \ref{t:1.3} it is sufficient to show that $\partial U_\mathfrak{P}$ is just a point to finish the proof. We achieve this in what follows. 

Recall that :



\begin{equation}
	\label{eq:descripEnds}
Ends(S_{\mathfrak{P}})=B_{\mathfrak{P}}\sqcup U_{\mathfrak{P}}=\{[\widetilde{U}_{n}]_{n\in\mathbb{N}}\}\cup\left(\bigsqcup_{g\in G}i_{g}(Ends(S_{g})\setminus\{[U^{g}_{n}]_{n\in\mathbb{N}}\}) \right). 	
\end{equation}
We claim that $\partial U_{\mathfrak{P}}=\{[\widetilde{U}_{n}]_{n\in\mathbb{N}}\}
$. First remark that, by the choices we made, the secret end of $S_\mathfrak{P}$ is not an isolated point of $Ends(S_\mathfrak{P})$, hence $[\widetilde{U}_{n}]_{n\in\mathbb{N}}\in B_{\mathfrak{P}}$. From (\ref{equationRef2}) and (\ref{eq:descripEnds}) we obtain:
\begin{equation}
	\label{eq:37}
B_{\mathfrak{P}}=\{[\widetilde{U}_{n}]_{n\in\mathbb{N}}\}\sqcup \left(\bigsqcup\limits_{g\in G}i_{g}(B_{g}\setminus\{[U^{g}_{n}]_{n\in\mathbb{N}}\}) \right) \text{ and } U_{\mathfrak{P}}=\bigsqcup\limits_{g\in G} i_{g}(U_{g}).
\end{equation}
Every neighbourhood of the secret end $[\widetilde{U}_{n}]_{n\in\mathbb{N}}$ intersects $i_g(U_g)$ for infinitely many $g\in G$, therefore $[\widetilde{U}_{n}]_{n\in\mathbb{N}}\in\partial U_\mathfrak{P}$. Now let's prove by contradiction that if $[V_n]_{n\in\mathbb{N}}\in\partial U_\mathfrak{P}$, then $[V_n]_{n\in\mathbb{N}}=[\widetilde{U}_{n}]_{n\in\mathbb{N}}$.  
By (\ref{eq:37}), if  $[V_n]_{n\in\mathbb{N}}\neq [\widetilde{U}_{n}]_{n\in\mathbb{N}}$ then $[V_n]_{n\in\mathbb{N}}\in i_{g}(B_{g}\setminus\{[U^{g}_{n}]_{n\in\mathbb{N}}\})$ for some $g\in G$. Without loss of generality we can suppose that $[V_n]_{n\in\mathbb{N}}$ is actually an end in $B_g\setminus\{[U_n^g]_{n\in\mathbb{N}}\}\subset Ends(S_g)$. Therefore there exist $l,n\in\mathbb{N}$ such that $V_l\cap U_n^g=\emptyset$. But then 
the open set of $S_\mathfrak{P}$ given by $V_l^*$ is an open neighbourhood of $[V_n]_{n\in\mathbb{N}}$ which lies in the complement of $U_\mathfrak{P}=\bigsqcup\limits_{g\in G} i_{g}(U_{g})$, which contradicts $[V_{n}]_{n\in\mathbb{N}}\in\partial U_\mathfrak{P}$.

\rightline{$\Box$}

\begin{bibdiv}
\begin{biblist}

\bib{BV}{article}{
author={Bowman, J.},
author={Valdez, F.}
title={Wild singularities of translation surfaces}
journal={Israel Journal of Mathematics},
   volume={197},
   date={2013},
   pages={69--97},

}

\bib{Dies}{book}{
   author={Diestel, R.},
   title={Graph theory},
   series={Graduate Texts in Mathematics},
   volume={173},
   edition={Fourth},
   publisher={Springer},
   place={New York},
   date={2010},
}

\bib{Dugu}{book}{
   author={Dugundji, J.},
   title={Topology},
   publisher={Allyn and Bacon Inc.},
   place={Boston, Mass.},
   date={1966},
   pages={xvi+447},
}

\bib{Fre}{article}{
   author={Freudental, H.},
   title={\"Uber die Enden topologischer R\"aume und Gruppen},
   journal={Math. Z.},
   volume={33},
   date={1931},
   number={1},
   pages={692--731},
}

\bib{Ghys}{article}{
   author={Ghys, \'E.},
   title={Topologie des feuilles g\'en\'eriques},
   journal={Ann. of Math. (2)},
   volume={141},
   date={1995},
   number={2},
   pages={387--422},
}

\bib{Geo}{book}{
   author={Geoghegan, R.},
   title={Topological methods in group theory},
   series={Graduate Texts in Mathematics},
   volume={243},
   publisher={Springer},
   place={New York},
   date={2008},
}

\bib{HW}{article}{
    AUTHOR = {Hooper, W. Patrick and Weiss, Barak},
     TITLE = {Generalized staircases: recurrence and symmetry},
   JOURNAL = {Ann. Inst. Fourier (Grenoble)},
  FJOURNAL = {Universit\'e
   de Grenoble. Annales de l'Institut Fourier},
    VOLUME = {62},
      YEAR = {2012},
    NUMBER = {4},
     PAGES = {1581--1600},
      ISSN = {0373-0956},
       URL = {http://aif.cedram.org/item?id=AIF_2012__62_4_1581_0},
}

\bib{HJ}{book}{
   author={Hrbacek, K.},
   author={Jech, T.},
   title={Introduction to set theory},
   series={Monographs and Textbooks in Pure and Applied Mathematics},
   volume={220},
   edition={Third},
   publisher={Marcel Dekker Inc.},
   place={New York},
   date={1999},
   pages={xii+291},
}

\bib{Jech}{book}{
   author={Jech, T.},
   title={Set theory},
   series={Springer Monographs in Mathematics},
   note={The third millennium edition, revised and expanded},
   publisher={Springer-Verlag, Berlin},
   date={2003},
   pages={xiv+769},
}

\bib{KZ}{article}{
   author={Katov, A.},
   author={Zemlyakov, A.},
   title={Topological transitivity of billiards in poligons},
   journal={Math. Notes},
   volume={18},
   date={1975},
   pages={760--764},
}

\bib{Kec}{book}{
   author={Kechris, A.},
   title={Clasical Descriptive Set Theory},
   series={Graduate texts in Mathematics},
   volume={156},
   publisher={Springer-verlag.},
   place={New York},
   date={1994},
}

\bib{Ker}{book}{
   author={Ker\'ekj\'art\'o, B.},
   title={Vorlesungen \"uber Topologie I},
   publisher={Springer},
   place={Berl\'in},
   date={1923},
   }

\bib{MaSi}{article}{
   author={Mazurkiewicz, S.},
   author={Sierpi\'nski, W.},
   title={Contribution \`a la topologie des ensembles d\`enombrables},
   journal={Fundamenta Mathematicae},
   volume={1},
   date={1920},
   pages={17--27},
}

\bib{Mil}{article}{
   author={Milnor, J. W.},
   title={Infinite cyclic coverings},
   conference={
      title={Conference on the Topology of Manifolds (Michigan State Univ.,
      E. Lansing, Mich., 1967)},
   },
   book={
      publisher={Prindle, Weber \& Schmidt, Boston, Mass.},
   },
   date={1968},
   pages={115--133},
}

\bib{PSul}{article}{
   author={Phillips, A.},
   author={Sullivan, D.},
   title={Geometry of leaves},
   journal={Topology},
   volume={20},
   date={1981},
   number={2},
   pages={209--218},
}

\bib{PSV}{article}{
   author={Przytycki, P.},
   author={Schmith{\"u}sen, G.},
   author={Valdez, F.},
   title={Veech groups of Loch Ness Monsters},
   journal={Ann. Inst. Fourier (Grenoble)},
   volume={61},
   date={2011},
   number={2},
   pages={673--687},
}

\bib{Ray}{article}{
   author={Raymond, F.},
   title={The end point compactification of manifolds},
   journal={Pacific J. Math.},
   volume={10},
   date={1960},
   pages={947--963},
   }

\bib{Ian}{article}{
   author={Richards, I.},
   title={On the classifition of noncompact surfaces},
   journal={Trans. Amer. Math. Soc.},
   volume={106},
   date={1963},
   pages={259--269},
}

\bib{Spec}{article}{
 author={Specker, E.},
 title={ Die Erste Cohomologie gruppe von \"Uberlagerungen und Homotopieegenschaften drei dimensionaler Manningfaltingkeiten},   journal={Comm. Math. Helv.},
volume={23},
date={1949},
pages={303--333},
}

\bib{Str}{book}{
   author={Strebel, K.},
   title={Quadratic differentials},
   series={Ergebnisse der Mathematik und ihrer Grenzgebiete (3) [Results in
   Mathematics and Related Areas (3)]},
   volume={5},
   publisher={Springer-Verlag},
   place={Berlin},
   date={1984},
   pages={xii+184},
}

\bib{Va}{article}{
    AUTHOR = {Valdez, Ferr{\'a}n},
     TITLE = {Infinite genus surfaces and irrational polygonal billiards},
   JOURNAL = {Geom. Dedicata},
  FJOURNAL = {Geometriae Dedicata},
    VOLUME = {143},
      YEAR = {2009},
     PAGES = {143--154}
}

\bib{Vee}{article}{
   author={Veech, W. A.},
   title={Teichm\"uller curves in moduli space, Eisenstein series and an
   application to triangular billiards},
   journal={Invent. Math.},
   volume={97},
   date={1989},
   number={3},
   pages={553--583},
}

\bib{Vor}{article}{
   author={Vorobets, Ya. B.},
   title={Plane structures and billiards in rational polygons: the Veech
   alternative},
   journal={Uspekhi Mat. Nauk},
   volume={51},
   date={1996},
   number={5(311)},
   pages={3--42},
   translation={
      journal={Russian Math. Surveys},
      volume={51},
      date={1996},
      number={5},
      pages={779--817},
      },
}

 \end{biblist}
\end{bibdiv}

\end{document}